\documentclass[11pt,a4paper]{amsart}
\usepackage{amsmath,amsfonts,amsthm,amsopn,color,amssymb,enumitem,mathtools}
\usepackage{palatino}
\usepackage{graphicx}
\usepackage{caption}
\usepackage{subcaption}
\usepackage[colorlinks=true]{hyperref}
\hypersetup{urlcolor=blue, citecolor=red, linkcolor=blue}

\usepackage{geometry}
\usepackage{esint}
\usepackage[title]{appendix}

\numberwithin{equation}{section}

\newtheorem{theorem}{Theorem}[section]
\newtheorem{lemma}[theorem]{Lemma}

\newtheorem{proposition}[theorem]{Proposition}
\newtheorem{remark}[theorem]{Remark}

\newcommand{\e}{\varepsilon}

\newcommand{\R}{\mathbb{R}}

\newcommand{\D}{\mathbb{D}}

 \DeclareMathOperator{\Id}{Id}

\renewcommand{\geq}{\geqslant}

\renewcommand{\L}{\Lambda}

\newcommand{\N}{\mathbb{N}}

\def\bbm[#1]{\mbox{\boldmath $#1$}}
\newcommand{\beq }{\begin{equation}}
	\newcommand{\eeq }{\end{equation}}

\def\sideremark#1{\ifvmode\leavevmode\fi\vadjust{\vbox to0pt{\vss% the remark3
			\hbox to 0pt{\hskip\hsize\hskip1em%                          will appear only
				\vbox{\hsize3cm\tiny\raggedright\pretolerance10000%          on the side
					\noindent #1\hfill}\hss}\vbox to8pt{\vfil}\vss}}}%

\begin{document}
	
	\title[Conformal Metrics on the Disk with Prescribed Curvatures]{Conformal Metrics on the Disk with Prescribed Negative Gaussian Curvature and Boundary Geodesic Curvature}

\author {Rafael L\'opez-Soriano, Francisco J. Reyes-S\'anchez, David Ruiz}

\address{Universidad de Granada \\
	IMAG, Departamento de An\'alisis Matem\'atico \\
		Campus Fuentenueva 	\\	18071 Granada, Spain. }

\

 \email{ralopezs@ugr.es, fjreyes@ugr.es, daruiz@ugr.es}	
	
	\thanks{The authors have been supported by the  MICIN/AEI through
		the Grant PID2024-155314NB-I00, the \emph{IMAG-Maria de Maeztu} Excellence Grant CEX2020-001105-M, and by J. Andalucia via the Research Group FQM-116. F.J.R.S has  been supported by a PhD fellowship (PRE2021-099898) linked to the \emph{IMAG-Maria de Maeztu} Excellence Grant CEX2020-001105-M funded by MICIN/AEI}

	\keywords{Prescribed curvature problem, conformal metric, blow-up analysis, variational methods.}
	
	\subjclass[2020]{35J20, 58J32, 35B44}
	
	%%%%%%%%%%%%%%%%%%%%%%%%%%%%%%%%%%%%%%%%%%%%%%%%%%%%%%%%%%%%%%%%%%%%%%%%%%%%%%%%%%%%%%%%%%%%%%%%%%%%%%%%%%%%%%%%%%%%%%%%%
	%%%%%%%%%%%%%%%%%%%%%%%%%%%%%%%%%%%%%%%%%%%%%%%%%%% Abstract %%%%%%%%%%%%%%%%%%%%%%%%%%%%%%%%%%%%%%%%%%%%%%%%%%%%%%%%%%%%
	%%%%%%%%%%%%%%%%%%%%%%%%%%%%%%%%%%%%%%%%%%%%%%%%%%%%%%%%%%%%%%%%%%%%%%%%%%%%%%%%%%%%%%%%%%%%%%%%%%%%%%%%%%%%%%%%%%%%%%%%%
	\begin{abstract}
    We study the problem of prescribing the Gaussian curvature on the disk and the geodesic curvature on its boundary via a conformal change of the metric. In this paper the case of negative Gaussian curvature is treated, a regime for which the bubbling behavior of approximate solutions is not so well understood. This is due to the possible appearance of blow-up solutions with diverging length and area. We give an existence result under assumptions on the curvatures which are somewhat natural, in view of some obstructions inherent to the problem. Our strategy is variational and relies on the study of certain families of approximated problems. By performing a refined blow-up analysis for solutions with bounded Morse index, we conclude compactness.
\end{abstract}
	
	\maketitle
	%%%%%%%%%%%%%%%%%%%%%%%%%%%%%%%%%%%%%%%%%%%%%%%%%%%%%%%%%%%%%%%%%%%%%%%%%%%%%%%%%%%%%%%%%%%%%%%%%%%%%%%%%%%%%%%%%%%%%%%%%%%%%%%%%%%%%%%%%%%%%%%%%%%%%%%%%%%%%%%%%%%%%%%%%%%%%%%%%%%%%%%%%%%%%%%%%%%%%%%% Section 1 %%%%%%%%%%%%%%%%%%%%%%%%%% %%%%%%%%%%%%%%%%%%%%%%%%%%%%%%%%%%%%%%%%%%%%%%%%%%%%%%%%%%%%%%%%%%%%%%%%%%%%%%%%%%%%%%%%%%%%%%%%%%%%%%%%%%%%%%%%%%%%%%%%%%%%%%%%%%%%%%%%%%%%%%%%%%%%%%%%%%%%%%%%%%%%%%%%%%%%%%

	\section{Introduction}\label{section-introduction}

    Let $(\Sigma, \tilde g)$ be a compact Riemannian surface with smooth boundary $\partial\Sigma$. A metric $g$ is said to be conformal to $\tilde g$ if there exists a smooth positive function $\rho \in C^\infty(\Sigma)$, called the conformal factor, such that $g = \rho\, \tilde g$.
The classical problem of prescribing the Gaussian curvature on $\Sigma$ via a conformal change of the metric was initiated by Berger \cite{Berger1971} and Kazdan-Warner \cite{KazdanWarner1974}, and has attracted the attention of a large amount of work since then. The case $\Sigma = \mathbb{S}^2$ (the so called Nirenberg problem) turns out to be particularly delicate due to the noncompact effect of the group of conformal maps of the sphere, see \cite{ChangGurskyYang1993, ChangYang1987, ChangYang1988, ChenLi95,  Han-DMJ90, Ji2004, StruweDuke}.

When the boundary is not empty, a natural extension is to simultaneously prescribe the geodesic curvature along $\partial\Sigma$. Writing the conformal factor as $\rho = e^u$ for some smooth function $u$, the Gaussian curvature $K$ and the geodesic curvature $h$ of the perturbed metric $g = e^u \tilde g$ are related to the initial curvatures $\tilde K$ and $\tilde h$ through the transformation laws:
\begin{equation*}
    \begin{cases}
        -\Delta u + 2\tilde K = 2K e^{u} & \text{in } \Sigma,\\
        \frac{\partial u}{\partial \nu} + 2\tilde h = 2h e^{u/2} & \text{on } \partial\Sigma,
    \end{cases}
\end{equation*}
where $\Delta$ denotes the Laplace--Beltrami operator associated with $\tilde g$, and $\nu$ is the outward unit normal vector with respect to $\tilde g$.

\medskip
The case of prescribing constant curvatures $K$ and $h$ was treated by Brendle \cite{Brendle2002}, who employed a parabolic flow to obtain solutions in the limit. Classification results for related Liouville-type problems in the half-plane are also available in \cite{GalvezMira2009,LiZhu1995, Zhang2003}. Regarding nonconstant curvatures, the problem was first studied by Cherrier \cite{Cherrier1984}, and later extended in \cite{BattagliaLS2025, LopezSorianoMalchiodiRuiz2019}. In particular, the negative curvature regime $K<0$ has been investigated in depth for multiply connected surfaces in \cite{LopezSorianoMalchiodiRuiz2019}, while more recent advances for arbitrary genus have been obtained through a mean-field approach in \cite{BattagliaLS2025}. For other results in this framework, see \cite{BattagliaReyesSanchez2025, CajuCruzSantos2024}.

\medskip
When $\Sigma$ is the unit disk, the problem becomes the natural boundary counterpart to the classical Nirenberg problem on $\mathbb{S}^2$, and several analogies have been observed in the literature. Observe that by the Riemann mapping theorem we can assume that $\tilde{g}$ is the standard Euclidean metric, so that our problem becomes:
\begin{equation}
\left\{
    \begin{array}{ll}
        -\Delta u = 2K(x)e^{u} & \text{in } \mathbb{D},\\
        \frac{\partial u}{\partial \nu} + 2 = 2h(x)e^{u/2} & \text{on } \partial\mathbb{D}.
    \end{array}\right.
\label{eq:Problem}
\end{equation}
Integrating \eqref{eq:Problem} yields a constraint which corresponds to the Gauss-Bonnet identity:
\begin{equation}
\int_{\mathbb{D}} K(x) e^{u} + \int_{\partial\mathbb{D}} h(x) e^{u/2} = 2\pi.
\label{eq:Gauss-Bonnet}
\end{equation}

Most of the available results in the literature address the problem where at least one of the curvatures to be prescribed is constant (or zero). For instance, the case $h = 0$ was studied in \cite{ChangYang1987} via reflection to a Nirenberg problem on $\mathbb{S}^2$. The zero Gaussian curvature case ($K = 0$) has been extensively investigated, see \cite{ChangLiu1996, DaLioMartinazziRiviere2015,Gehrig2020,  GuoLiu2006,LiLiu2005,LiuHuang2005}. 

When both $K$ and $h$ are nonconstant functions, the problem becomes more intricate, as the interaction of the two curvatures has an effect on the problem that has no counterpart in the classical Nirenberg problem. In \cite{CruzRuiz2018} a new variational approach is developed when both $K$ and $h$ are positive. This variational formulation has been used by Struwe in \cite{Struwe2024} to define a new geometric parabolic flow from which general existence results are derived. On the other hand, in \cite{Ruiz2023} the Leray-Schauder degree of the problem is computed in a compact regime. This last result is based on the blow-up analysis of \cite{JevnikarLopezSorianoMedinaRuiz2022}, that we briefly describe below. Assume that there exists a blow-up sequence of solutions $u_n$ to the problem:
\begin{equation*}
\left\{
    \begin{array}{ll}
        -\Delta u_n = 2K_n(x)e^{u_n} & \text{in } \mathbb{D},\\
        \frac{\partial u_n}{\partial \nu} + 2 = 2h_n(x)e^{{u_n}/2} & \text{on } \partial\mathbb{D},
    \end{array}\right.
\label{eq:Problemsubn}
\end{equation*}
with $K_n \to K$, $h_n \to h$ in $\mathcal{C}^2$ sense, and with bounded area and length, i.e.,
\begin{equation} \label{bdedmass} \int_{\mathbb{D}} e^{u_n} + \int_{\partial\mathbb{D}} e^{{u_n}/2} <C.  \end{equation}

Then there is a unique blow-up point $p \in \partial \D$ such that $h^2(p)+K(p)>0$ and $p$ is a critical point of the auxiliary function
\begin{equation*}\label{eq:Phi}
\Phi(x) = H(x) + \sqrt{H(x)^2 + K(x)},
\end{equation*}
    defined in a neighborhood of $p$, where $H$ denotes the harmonic extension of $h$ to $\mathbb{D}$. In fact, these blow-up solutions have been constructed, see \cite{BattagliaCozziFernandez2023,BattagliaCruzBlazquezPistoia2023,BattagliaMedinaPistoia2021,BattagliaMedinaPistoia2023,CozziFernandez2024}. It is worth pointing out that in both articles \cite{Ruiz2023, Struwe2024}, the behavior of the function $\Phi$ turns out to be crucial.

\medskip

Observe that if both $K$, $h$ are positive, condition \eqref{bdedmass} is automatically satisfied due to \eqref{eq:Gauss-Bonnet}. In contrast, in this paper we consider the case of negative Gaussian curvature. Being more specific, we assume:
\begin{equation}\tag{H} \label{eq:assumption_H} 
K \in \mathcal{C}^2(\overline{\mathbb{D}}), \ h \in \mathcal{C}^2(\partial\mathbb{D}) \ \, \text{and} \  K < 0  \, \text{ in } \overline{\mathbb{D}}. 
\end{equation}

A first step toward the existence of solutions in this setting was given in \cite{LopezSorianoReyesSanchezRuiz2025}, which deals with curvatures $K$, $h$ with some kind of symmetry. Restricting to a space of symmetric functions leads to a significant simplification of the problem, as previously observed in \cite{Moser1973} for the Nirenberg problem. The main purpose of this paper is to remove the symmetry assumptions and give general existence results for negative $K$.

It is worth pointing out that the techniques of \cite{Ruiz2023, Struwe2024} do not apply to the case $K<0$. First, the parabolic flow defined in \cite{Struwe2024} is intrinsically restricted to positive curvatures due to the variational formulation used. Moreover, if $K<0$, the behavior of blow-up solutions is much less understood since \eqref{bdedmass} need not be satisfied; indeed, there are examples showing solutions with diverging area and length, see \cite{LopezSorianoMalchiodiRuiz2019}. Then, the arguments of \cite{Ruiz2023} (which rely on a precise knowledge of the behavior of all possible blow-up solutions) cannot be applied to this framework. 

A satisfactory description of blow-up sequences of solutions without \eqref{bdedmass} was given in \cite{LopezSorianoMalchiodiRuiz2019} under the additional assumption that the Morse index of the solutions remains bounded. This analysis is useful to deal with solutions that are obtained via variational methods, since their Morse index can be controlled by the min-max scheme used. This is the strategy that we follow in this paper to obtain the following existence result.

\begin{theorem}\label{thm:main_existence}
Assume \eqref{eq:assumption_H}, and suppose that $(a)$ and $(b^-)$ or $(b^+)$ are satisfied, where:
\begin{itemize}
    \item[\emph{(a)}] $\displaystyle \min_{x \in \partial\mathbb{D}} h(x) > \max_{x \in \partial\mathbb{D}} \sqrt{|K(x)|}$;

\smallskip
    
    \item[\emph{($b^-$)}] $\partial_\nu \Phi(x) < 0$ for all $x \in \partial\mathbb{D}$ such that $\partial_\tau \Phi(x) = 0$;

    \medskip 
    
    \item[\emph{($b^+$)}] $\partial_\nu \Phi(x) > 0$ for all $x \in \partial\mathbb{D}$ such that $\partial_\tau \Phi(x) = 0$.
\end{itemize}

\medskip 

Then problem \eqref{eq:Problem} admits at least one solution.
\end{theorem}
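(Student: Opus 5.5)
The plan is variational. We work with the energy functional
\begin{equation*}
I(u)=\frac12\int_{\mathbb{D}}|\nabla u|^2+2\int_{\partial\mathbb{D}}u-2\int_{\mathbb{D}}K e^{u}-4\int_{\partial\mathbb{D}}h e^{u/2},
\end{equation*}
defined on $H^1(\mathbb{D})$. Its critical points are exactly the solutions of \eqref{eq:Problem}. Since $K<0$, the area term $-2\int K e^u$ is coercive in the direction $u\to+\infty$. The boundary term $-4\int h e^{u/2}$ pushes the other way, and condition (a) makes it dominate along concentrating profiles. Indeed, for a boundary bubble $u_{\lambda,p}$ centred at $p\in\partial\mathbb{D}$, the local Liouville model in the half-plane, with constant curvatures $K(p)$ and $h(p)$, has a solution exactly when $h(p)^2+K(p)>0$. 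Condition (a) gives $h(p)>\sqrt{|K(p)|}$ everywhere on $\partial \mathbb{D}$, so such bubbles exist at every boundary point. A direct expansion should give $I(u_{\lambda,p})\to-\infty$ as $\lambda\to\infty$, with leading correction governed by $\Phi(p)$. On the other hand, testing with constants, or using the Moser--Trudinger-type inequality on the disk with boundary trace, shows that $I$ is bounded below on suitable sets. This produces a min-max geometry of linking type: low sublevels look like $\partial\mathbb{D}\simeq\mathbb{S}^1$ (the bubbles indexed by their centre), while the whole space is contractible. The resulting class is the family of disks $\overline{\mathbb{D}}\to H^1$ whose boundary lies on the family of bubbles $\{u_{\lambda,p}\}_{p\in\partial\mathbb{D}}$ with $\lambda$ large.

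Since Palais--Smale sequences need not be bounded, we run the min-max on a family of perturbed functionals, for instance $I_\rho$ with $h$ replaced by $\rho h$, $\rho\in[1-\varepsilon,1]$. Struwe's monotonicity trick then yields, for almost every $\rho$, a genuine critical point $u_\rho$. A Fang--Ghoussoub type argument additionally gives Morse index at most $1$. We take $\rho_n\to1$ and must prove that $u_{\rho_n}$ is compact, which is the heart of the proof.

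The compactness step uses the blow-up analysis of \cite{LopezSorianoMalchiodiRuiz2019} for solutions with bounded Morse index, which covers the case without \eqref{bdedmass}. Interior blow-up is excluded because $K<0$. Blow-up with diverging area and length occurs only where $h^2+K\le0$, i.e.\ near points where $h\le\sqrt{|K|}$, and condition (a) forbids this. Hence \eqref{bdedmass} holds along the sequence. The analysis of \cite{JevnikarLopezSorianoMedinaRuiz2022} then says that any remaining blow-up happens at a single point $p\in\partial\mathbb{D}$ with $\partial_\tau\Phi(p)=0$. The final step is to rule out such a $p$ using the sign of $\partial_\nu\Phi(p)$, which I expect to be the main obstacle. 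Under $(b^-)$, the refined expansion of the energy (equivalently, of the Morse index) of a boundary bubble near $p$ shows that bubbles with $\partial_\nu\Phi<0$ are strict local minima, or saddles of index $0$, in the normal/scale direction. Such bubbles cannot be the index-$1$ min-max points produced at a level strictly above the energies of the bubbles spanning the boundary of the class. Concretely, I would show that the min-max level stays bounded while the energy of any blow-up sequence at a point with $\partial_\nu\Phi<0$ tends to $-\infty$.

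Under $(b^+)$ we argue the other way. Either the bubble energy goes to $+\infty$, contradicting the boundedness of the min-max level, or a degree/Morse-index count obtained by flipping the roles of minima and maxima of $\Phi|_{\partial\mathbb{D}}$ excludes blow-up. This may require adapting the min-max class, for example taking a mountain-pass class between $\{\Phi\text{-maxima}\}$ bubbles instead. Once compactness is established, $u_{\rho_n}$ converges to a solution of \eqref{eq:Problem}.
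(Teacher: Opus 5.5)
\textbf{Gap 1: the existence step fails.} Using $h\mapsto\rho h$ as the perturbation keeps the problem geometric, and then the monotonicity trick cannot produce critical points. In Jeanjean's form $I_\rho=A-\rho B$, with $B=4\int_{\partial\mathbb{D}}he^{u/2}\ge0$, you only obtain Palais--Smale sequences along which $A$ and $B$ stay bounded. That does not bound $\|u\|_{H^1}$, because the coercivity hypothesis fails: $\|{-n}\|\to\infty$ while $A(-n)\to-\infty$ and $B(-n)\to0$.

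This is not a technicality. Take the example $K=-1$, $h=3+x_1$ from the paper's Remark. Condition (a) holds, and a mountain-pass geometry holds uniformly for $\rho$ near $1$. Yet the Kazdan--Warner identity rules out solutions for every $\rho h$, $\rho>0$. Relatedly, for geometric perturbations the blow-up analysis of \cite{JevnikarLopezSorianoMedinaRuiz2022} forces $\nabla\Phi(p)=0$. So your scheme would give existence whenever $\nabla\Phi\neq0$ on $\partial\mathbb{D}$, and that example satisfies this.

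\textbf{Gap 2: the min-max geometry is wrong.} The terms $\frac12\int_{\mathbb{D}}|\nabla u|^2+2\int_{\partial\mathbb{D}}u$ and the exponential terms are invariant under the M\"obius action. Hence $I(u_{\lambda,p})$ stays bounded as the bubble concentrates; it does not tend to $-\infty$. Moreover, the set that disks spanning the circle of bubbles are forced to meet via the barycenter is $\{\int_{\partial\mathbb{D}}x\,e^{u/2}=0\}$. This set contains the constants $-n$, on which $I\to-\infty$. Such a two-dimensional class would also give index at most $2$, not $1$.

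\textbf{The missing idea.} The paper uses a non-geometric, coercive perturbation $\mathcal{I}_\varepsilon=(\mathcal{I}+\varepsilon\mathcal{T})/(1+\varepsilon)$, with $\mathcal{T}(u)=\int_{\mathbb{D}}(\frac12|\nabla u|^2+e^u-u)$, and takes $\varepsilon$ of a prescribed sign.
\begin{itemize}
\item Coercivity of $\mathcal{T}$ gives bounded Palais--Smale sequences, with Morse index control, for a.e.\ $\varepsilon$.
\item The sign of $\varepsilon$ dictates the geometry. The new linear terms $2\tilde K_\varepsilon\int_{\mathbb{D}}u+2(\tilde h_\varepsilon-1)\int_{\partial\mathbb{D}}u$ contribute roughly $-10\pi\frac{\varepsilon}{1+\varepsilon}\log(1-\lambda^2)$ along concentrating bubbles.
\item For $\varepsilon<0$ this contribution tends to $-\infty$, and one gets a three-dimensional linking with $M=\{\int_{\partial\mathbb{D}}e^{u/2}=1,\ \int_{\partial\mathbb{D}}x\,e^{u/2}=0\}$, giving index at most $3$.
\item For $\varepsilon>0$ one only has a mountain pass, between a very negative constant and a function of large boundary length, giving index at most $1$.
\end{itemize}

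The same terms enter the Kazdan--Warner identity, and this is what excludes blow-up. For a blow-up sequence $u_n=u_{a_n}+\psi_n$, the paper uses the refined bound $\|\psi_n\|_{C^{0,\alpha}}\le C(1-|a_n|)^{-2\alpha}(1-|a_n|+|\varepsilon_n|)$ and the limits $\int_{\mathbb{D}}x_1u_{a_n}\to\pi$, $\int_{\partial\mathbb{D}}x_1u_{a_n}\to4\pi$. It obtains $c_n(1-a_n^2)\partial_\nu\Phi(p)=10\pi\varepsilon_n+o(1-a_n)$ with $c_n>0$ bounded away from $0$. Hence $\partial_\tau\Phi(p)=0$ and $\varepsilon_n\partial_\nu\Phi(p)\ge0$. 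So $(b^-)$ is paired with $\varepsilon_n\downarrow0$ and $(b^+)$ with $\varepsilon_n\uparrow0$.

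Your energy-comparison heuristic cannot replace this. Bounded-mass blow-up sequences of the geometric problem have bounded energy, so ``energy tends to $-\infty$ at points with $\partial_\nu\Phi<0$'' is false. Your $(b^+)$ case is also left unargued.
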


Here, $\partial_\tau$ and $\partial_\nu$ denote the tangential and outward normal derivatives on the boundary, associated with the unit vectors
$$
  \nu(x)=x=(x_1,x_2) \quad \text{and} \quad \tau(x)=(-x_2,x_1).
$$

\begin{remark} It is worth noting that the assumptions $(a)$ and $(b^{\pm})$ are somewhat necessary, in view of some available obstructions to existence. For instance, if $K=-1$ and $h<1$ is a constant, non-existence of solutions follows from the behavior of horocycles in the hyperbolic plane. This idea has been pushed further in \cite[Theorem 1.2]{LopezSorianoReyesSanchezRuiz2025}, where nonexistence is shown  if  
\begin{equation} \label{nonexist} \min_{x \in \partial\mathbb{D}} h(x) < \max_{x \in \overline{\mathbb{D}}} \sqrt{|K(x)|}. \end{equation} We can easily find examples of $K$ and $h$ satisfying \eqref{nonexist} and $(b^+)$ (or $(b^-)$); hence, assumption $(a)$ is required in Theorem \ref{thm:main_existence}. At this point, let us mention that $(a)$ can be relaxed under condition $(b^-)$, see Theorem \ref{thm:existence_mountain_pass} for details. 

But also $(b^{\pm})$ is necessary in Theorem \ref{thm:main_existence}. To see this, consider $K(x) = -1$ and $h(x) = 3+x_1$. Although \eqref{eq:assumption_H} and $(a)$ are satisfied, \eqref{eq:Problem} does not admit a solution. This comes from the Kazdan-Warner identity derived in \cite[Proposition 2.7]{JevnikarLopezSorianoMedinaRuiz2022}: any solution to \eqref{eq:Problem} satisfies the following:
$$
\int_{\mathbb{D}} e^u \nabla K \cdot F = 4 \int_{\partial\mathbb{D}} \partial_\tau h \, e^{u/2} x_2,
$$
where $F(x_1, x_2):=(1-x_1^2+x_2^2,-2x_1x_2)$. In our case,
$$ \int_{\mathbb{D}} e^u \nabla K \cdot F =0, \ \  \int_{\partial\mathbb{D}} \partial_\tau h \, e^{u/2} x_2 = -\int_{\partial\mathbb{D}} x_2^2 e^{u/2}<0,$$
yielding a contradiction.

Observe that here $H(x)= 3+x_1$ and the equation $\partial_\tau \Phi(p) = 0$ ($p \in \partial \D$) has solutions $p_1=(1,0)$ and $p_2=(-1,0)$. A direct computation gives $\partial_\nu\Phi(p_1) > 0$ while $\partial_\nu\Phi(p_2) < 0$, so that neither $(b^-)$ nor $(b^+)$ is satisfied.  

\end{remark}

As previously mentioned, our proofs use variational methods; solutions to problem \eqref{eq:Problem} are searched as critical points of the energy functional $\mathcal{I}: H^1(\mathbb{D}) \to \mathbb{R}$ defined by
\begin{equation*}\label{eq:EnergyFunctionalDisk}
\mathcal{I}(u) = \int_{\mathbb{D}} \left( \frac{1}{2}|\nabla u|^2 - 2K(x)e^{u} \right) + \int_{\partial\mathbb{D}} \left( 2u - 4h(x)e^{u/2} \right).
\end{equation*}

The geometric properties of $\mathcal{I}$ are far from evident, due to the presence of competing inner terms and boundary terms. Moreover, the possibility of blow-up solutions makes the Palais-Smale condition unlikely to hold. Because of that, we first use a perturbation argument, considering a one-parameter family of functionals
$\mathcal{I}_\varepsilon: H^1(\mathbb{D}) \to \mathbb{R}$ (see \eqref{eq:perturbed_functional_compact}), where $\e$ is a small parameter, not necessarily positive. The basic idea of the proof is to first find critical points of $\mathcal{I}_\e$ via variational methods, and then pass to the limit to get a solution to \eqref{eq:Problem}. Let us elaborate on this procedure.

It turns out that the behavior of the functional $\mathcal{I}_\e$ changes dramatically depending on the sign of $\e$. If $\e \geq 0$ the functional is of mountain-pass type, as low sublevel sets are not path connected. This is not the case for $\e <0$; instead, a more sophisticated 3-dimensional linking structure is found in this case. This linking structure is completely new, and its description uses in an essential way a comparison argument with the constant coefficient case. We consider this construction as one of the primary contributions of our work, and we think that this idea could be useful in other geometric problems of this sort.

The (PS) property seems out of reach even for the perturbed functionals $\mathcal{I}_\e$, as (PS) sequences could eventually diverge in norm. We address this difficulty by using Struwe's monotonicity trick \cite{Jeanjean1999,Struwe1985}, which allows us to find bounded (PS) sequences $v_\e^k$ for almost every $\e$ small. Passing to the limit as $k \to \infty$, solutions are obtained for a family of perturbed problems. Moreover, this procedure can be done controlling the Morse index of the solutions, see \cite{BellazziniRuiz23, BorthwickChangJeanjeanSoave2023}.

At this point, we have obtained the existence of sequences $\e_n \to 0$ (where $\e_n$ can be chosen either positive or negative), and $u_n$ critical points of $\mathcal{I}_{\e_n}$ with bounded Morse index. We can now use \cite{LopezSorianoMalchiodiRuiz2019} to show that the length and area are uniformly bounded, i.e., \eqref{bdedmass} holds. At this  point, we address the compactness of $u_n$ by contradiction, employing the arguments of \cite{JevnikarLopezSorianoMedinaRuiz2022}. In contrast to \cite{JevnikarLopezSorianoMedinaRuiz2022}, the perturbed problems we are working with are not genuinely geometric (i.e., their solutions do not correspond to metrics with prescribed curvatures). Therefore, the study of \cite{JevnikarLopezSorianoMedinaRuiz2022} must be suitably adapted to our setting. Both in the estimate of the error terms, and in the resulting asymptotic expansion, the perturbation plays a significant role in our analysis. In sum, we conclude that if $u_n$ is unbounded, then there exists a unique blow-up point $p \in \partial \D$ with $\partial_{\tau} \Phi(p)=0$ and $\e_n \, \partial_{\nu} \Phi(p)\geq 0$. This contradicts either condition $(b^{-})$ or $(b^{+})$, so that $u_n$ is bounded and we can pass to the limit to obtain a solution to \eqref{eq:Problem}.
\medskip

The remainder of the paper is organized as follows. Section \ref{sec:Preliminaries} introduces the variational framework and presents two auxiliary existence results which, combined, yield Theorem \ref{thm:main_existence}. This section also collects necessary preliminary results. Section \ref{sec:Existence-result} is devoted to the geometric construction of the mountain-pass and linking structures, from which a sequence of approximate solutions is found. In Section \ref{sec:Blow-upAnalysis} we perform a refined blow-up analysis that provides the crucial compactness criterion. With all these ingredients at hand, we conclude the proof of the main results in Section \ref{sec:proofs}. Finally, the Appendix is devoted to an explicit computation of a couple of integral terms appearing in our arguments.

\medskip
\textbf{Notation:} Throughout the paper, the letter $C$ denotes a generic constant, whose value may change from line to line. We also use the standard Landau notation: $O(\varepsilon)$ denotes a quantity bounded by a constant multiple of $\varepsilon$, while $o(\varepsilon)$ denotes a quantity negligible with respect to $\varepsilon$ as $\varepsilon \to 0$.

    %%%%%%%%%%%%%%%%%%%%%%%%%%%%%%%%%%%%%%%%%%%%%%%%%%%%%%%%%%%%%%%%%%%%%%%%%%%%%%%%%%%%%%%%%%%%%%%%%%%%%%%%%%%%%%%%%%%%%%%%%%%%%%%%%%%%%%%%%%%%%%%%%%%%%%%%%%%%%%%%%%%%%%%%%%%%%%%%%%%%%%%%%%%%%%%%%%%%%%%% Section 2 %%%%%%%%%%%%%%%%%%%%%%%%%% %%%%%%%%%%%%%%%%%%%%%%%%%%%%%%%%%%%%%%%%%%%%%%%%%%%%%%%%%%%%%%%%%%%%%%%%%%%%%%%%%%%%%%%%%%%%%%%%%%%%%%%%%%%%%%%%%%%%%%%%%%%%%%%%%%%%%%%%%%%%%%%%%%%%%%%%%%%%%%%%%%%%%%%%%%%%%%
	
	\section{Statement of the Main Results and Preliminaries} \label{sec:Preliminaries}

In order to give a precise statement of our main results, it is convenient to define the following scale-invariant function, introduced in \cite{LopezSorianoMalchiodiRuiz2019}:
\begin{equation*}
\mathfrak{D} : \partial\mathbb{D} \longrightarrow \mathbb{R}, \qquad 
\mathfrak{D}(x) = \frac{h(x)}{\sqrt{|K(x)|}}.
\end{equation*}

We point out that, when assumption $(b^-)$ of 
Theorem \ref{thm:main_existence} is satisfied, hypothesis $(a)$ can be relaxed significantly. Indeed, we will prove the following theorem. 

\begin{theorem}\label{thm:existence_mountain_pass}
Let $K$ and $h$ satisfy \eqref{eq:assumption_H} and assume that the following hypotheses hold:
\begin{itemize}
    \item[\emph{(i)}] There is a point $x_0 \in \partial\D$ with  $\mathfrak{D}(x_0) >  1$;
    \item[\emph{(ii)}] $\partial_\tau \mathfrak{D}(x) \neq 0$ for all $x \in \partial\mathbb{D}$ where $\mathfrak{D}(x) = 1$;
    \item[\emph{(iii)}] $\partial_\nu \Phi(x) < 0$ for all $x \in \partial\mathbb{D}$ where $\partial_\tau \Phi(x) = 0$.
\end{itemize}
Then, problem \eqref{eq:Problem} admits at least one solution.
\end{theorem}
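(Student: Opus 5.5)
The plan is to follow the scheme described in the introduction, with $\e_n>0$, i.e.\ the mountain-pass branch. I introduce the perturbed functional
$$\mathcal{I}_\e(u)=\int_{\D}\Big(\tfrac12|\nabla u|^2-2K e^{u}\Big)+\int_{\partial\D}\Big(2(1+\e)u-4he^{u/2}\Big),$$
so that the boundary datum becomes $2+2\e$ and Gauss--Bonnet reads $\int_{\D}Ke^u+\int_{\partial\D}he^{u/2}=2\pi(1+\e)$. For $\e\ge 0$ small I aim to show that $\mathcal{I}_\e$ has a mountain-pass geometry.

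\textbf{Low sublevels.} Hypothesis (i) gives an arc $A\subset\partial\D$ around $x_0$ where $\mathfrak{D}>1$. Concentrating boundary bubbles at points of $A$ (pullbacks of hyperbolic metrics in which the boundary is a curve with $h/\sqrt{|K|}>1$, as in the constant-coefficient comparison) should drive $\mathcal{I}_\e\to-\infty$. Hypothesis (ii) says that the level set $\{\mathfrak{D}=1\}$ is transversal. So $\{\mathfrak{D}>1\}$ is a finite union of arcs separated from $\{\mathfrak{D}<1\}$, and on the latter arcs bubbles cannot lower the energy.

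\textbf{Separating set.} I would exhibit two points of very low energy lying in different components of a low sublevel set. One option is a bubble at $x_0$ together with a function at very negative constant level, if the latter also has very low energy for $\e>0$; note $\mathcal{I}_\e(c)\approx 4\pi(1+\e)c\to-\infty$ as $c\to-\infty$. Alternatively, I would use bubbles on two different arcs. I then need a barrier at a higher level separating them. I would build this barrier through a Moser--Trudinger-type inequality with boundary terms, localized with respect to where the mass of $e^{u/2}$ on $\partial\D$ concentrates, combined with the fact that concentration near points where $\mathfrak{D}\le 1$ costs energy.

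\textbf{Passing to $\e\to0$.} Since (PS) is unavailable, I apply Struwe's monotonicity trick in $\e$ (the $\e$-dependence is monotone through the linear boundary term, as in \cite{Jeanjean1999}). This gives, for a.e.\ small $\e>0$, bounded PS sequences and hence critical points $u_\e$ with Morse index at most $1$ \cite{BellazziniRuiz23,BorthwickChangJeanjeanSoave2023}. Choosing $\e_n\downarrow0$, the bounded-Morse-index analysis of \cite{LopezSorianoMalchiodiRuiz2019} yields the area and length bound \eqref{bdedmass}. The refined blow-up analysis adapted from \cite{JevnikarLopezSorianoMedinaRuiz2022}, carried out later in the paper, then shows that if $u_n$ blows up, there is a unique boundary point $p$ with $\partial_\tau\Phi(p)=0$ and $\e_n\,\partial_\nu\Phi(p)\ge0$, i.e.\ $\partial_\nu\Phi(p)\ge0$. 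This contradicts (iii). Hence $u_n$ is bounded in $C^{2,\alpha}$, and it converges to a solution of \eqref{eq:Problem}.

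\textbf{Main obstacle.} I expect the hardest step to be the mountain-pass geometry under only (i)--(ii), rather than (a). The difficulty is proving a uniform lower bound on the separating set, since bubbles can slide along $\partial\D$ toward regions where $\mathfrak{D}<1$. I would first try a localized improved Moser--Trudinger inequality paired with the hyperbolic comparison computation for constant $K,h$, using (ii) to ensure the bubble energy changes sign transversally across $\{\mathfrak{D}=1\}$.
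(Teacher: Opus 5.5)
There is a genuine gap, and it starts with the sign of your perturbation. With boundary coefficient $2(1+\e)$, $\e>0$, you are in the analogue of the paper's $\e<0$ regime (boundary coefficient $\tilde h_\e>1$). The paper handles that regime by a three-dimensional linking, and only under the stronger hypothesis (a); it is not a mountain-pass regime. For the constant-coefficient bubbles $\Psi(\cdot,\phi,\lambda p)$ with $\phi$ fixed, area and length do not depend on $\lambda$, while $\int_{\partial\D}\Psi=4\pi\log(1-\lambda^2)+O(1)$. Hence your functional satisfies $\mathcal{I}_\e(\Psi(\cdot,\phi,\lambda p))=8\pi\e\log(1-\lambda^2)+O(1)\to-\infty$ uniformly in $p\in\partial\D$, whatever the value of $\mathfrak{D}(p)$.

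So concentration is never penalized. The segment from a very negative constant to such a bubble (estimated exactly as the paper's $\gamma_1$) and the rotation of a concentrated bubble along $\partial\D$ both stay in arbitrarily low sublevels. Neither of your pairs of endpoints can therefore be separated. Normalizing the harmonic Lebedev--Milin extremals to $\int_{\partial\D}e^{u/2}=1$ shows your functional is even unbounded below on that constraint. The barrier you single out as the main obstacle does not exist.

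The endgame fails for the same reason. The relation $\e_n\,\partial_\nu\Phi(p)\ge 0$ belongs to the paper's perturbation, not yours. For yours, Proposition~\ref{prop:Kazdan-Warner_ind} with $\tilde K=0$, $\tilde h=1+\e_n$ has forcing term $2\e_n\int_{\partial\D}x_1u_{a_n}=8\pi\e_n(1+o(1))$. This has the sign of the paper's forcing $-10\pi\e_n/(1+\e_n)$ in the case $\e_n<0$. You would thus get $\partial_\nu\Phi(p)\le 0$, which (iii) does not exclude.

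Independently of the sign, the monotonicity trick does not apply to your family. Since $\int_{\partial\D}u$ has no sign, $\e\mapsto\mathcal{I}_\e(u)$ is not monotone. Moreover, a bound on $\int_{\partial\D}u$ together with an energy bound does not control $\|u\|_{H^1}$, so you get no bounded (PS) sequences. This is exactly why the paper perturbs by $\mathcal{I}_\e=(\mathcal{I}+\e\mathcal{T})/(1+\e)$ with $\mathcal{T}(u)=\int_\D(\frac12|\nabla u|^2+e^u-u)$, which is nonnegative and coercive. With $\e>0$ this lowers the boundary coefficient to $1/(1+\e)<1$ (and gives $\tilde K_\e<0$), and the geometry becomes elementary:
\begin{itemize}
\item On $M=\{\int_{\partial\D}e^{u/2}=1\}$ one has $\mathcal{I}_\e\ge\mathcal{I}/(1+\e)\ge -C$ by Lemma~\ref{lem:lebedev_milin} and $K<0$, with no localization needed.
\item The endpoints are a very negative constant and a low-energy function with large boundary length. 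In this regime concentration costs energy, so producing the latter genuinely requires (i).
\item Continuity of $u\mapsto\int_{\partial\D}e^{u/2}$ forces every path between them to cross $M$.
\end{itemize}
Finally, (ii) plays no role in the geometry. It is what rules out $S_0$ in the bounded-Morse-index blow-up theorem, and that is what yields the bounded mass and the single blow-up point you take for granted.
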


Instead, when assumption $(b^+)$ of Theorem \ref{thm:main_existence} is satisfied, we do need hypothesis $(a)$: 

\begin{theorem}\label{thm:existence_linking}
Let $K$ and $h$ satisfy \eqref{eq:assumption_H} and assume that the following hypotheses hold:
\begin{itemize}
    \item[\emph{(i)}] $\displaystyle{ \min_{x \in \partial\D} h(x) > \max_{x \in \partial\D} \sqrt{|K(x)|}}$;
    \item[\emph{(ii)}] $\partial_\nu \Phi(x) > 0$ for all $x \in \partial\mathbb{D}$ where $\partial_\tau \Phi(x) = 0$.
\end{itemize}
Then, problem \eqref{eq:Problem} admits at least one solution.
\end{theorem}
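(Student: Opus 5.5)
The plan follows the strategy outlined in the introduction for the case $\varepsilon<0$.

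\textbf{Step 1: the perturbed functionals.} We introduce the one-parameter family $\mathcal{I}_\varepsilon$ of perturbed functionals and work with $\varepsilon<0$ small. A natural choice is to multiply the boundary term $-4\int_{\partial\mathbb{D}} h e^{u/2}$ (or equivalently the linear term) by a factor $(1+\varepsilon)$. This way the blow-up analysis produces the sign condition $\varepsilon_n\,\partial_\nu\Phi(p)\ge 0$.

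\textbf{Step 2: the three-dimensional linking.} This is where hypothesis (i) enters. We compare with the constant-coefficient problem $K\equiv -k$, $h\equiv h_0$ with $h_0>\sqrt{k}$. That problem has explicit solutions, namely hyperbolic disks bounded by curves of constant geodesic curvature. Composing with Möbius maps of the disk gives a noncompact family parametrized by $\mathbb{D}$, and hence a $3$-dimensional set: two conformal parameters plus a scaling or additive constant.

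Using pointwise comparison $K\ge -\max|K|$ and $h\ge\min h$, we estimate $\mathcal{I}_\varepsilon$ on a suitable $3$-dimensional compact set $Q$ built from these bubbles and concentrating toward $\partial\mathbb{D}$. We want to show that $\sup_{\partial Q}\mathcal{I}_\varepsilon$ is strictly below $\inf_{S}\mathcal{I}_\varepsilon$, where $S$ is a set linking with $\partial Q$. For $S$ we would take functions with a fixed "center of mass" (a boundary barycenter map $\beta(u)=\int_{\partial\mathbb{D}} x e^{u/2}/\int_{\partial\mathbb{D}} e^{u/2}$) and a fixed level of $\int_{\partial\mathbb{D}} e^{u/2}$.

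The key inequality is a Moser–Trudinger/Lebedev–Milin type lower bound on $S$. The sign $\varepsilon<0$ should be precisely what makes the boundary of $Q$, where bubbles concentrate at $\partial\mathbb{D}$, lie strictly lower. The linking itself follows by a degree argument for $\beta$ restricted to $Q$.

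\textbf{Step 3: bounded Palais–Smale sequences with controlled Morse index.} Via Struwe's monotonicity trick in the version of Jeanjean, for almost every $\varepsilon<0$ small we obtain a bounded (PS) sequence at the linking level, and hence a critical point $u_\varepsilon$. Using \cite{BellazziniRuiz23, BorthwickChangJeanjeanSoave2023}, its Morse index is at most $3$. We then take $\varepsilon_n\to 0^-$.

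\textbf{Step 4: compactness.} The bounded Morse index together with \cite{LopezSorianoMalchiodiRuiz2019} gives the area and length bound \eqref{bdedmass}. The adapted blow-up analysis of \cite{JevnikarLopezSorianoMedinaRuiz2022} then says that, if $u_n$ is unbounded, there is a unique boundary blow-up point $p$ with $\partial_\tau\Phi(p)=0$ and $\varepsilon_n\partial_\nu\Phi(p)\ge0$, that is, $\partial_\nu\Phi(p)\le0$. This contradicts (ii). Hence $u_n$ is bounded in $H^1$ and, by elliptic estimates, in $C^{2,\alpha}$. Passing to the limit yields a solution of \eqref{eq:Problem}.

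\textbf{Main obstacle.} I expect Step 2 to be the hardest: constructing the linking and proving the strict energy separation uniformly in $\varepsilon<0$ small. My first attempt would rely on sharp expansions of $\mathcal{I}$ along the explicit constant-curvature bubbles, together with the comparison argument.
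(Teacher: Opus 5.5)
There is a genuine gap. Your architecture matches the paper's: perturb, build a three-dimensional linking around $M=\{A(u)=1,\ B(u)=0\}$ from constant-curvature bubbles with a degree argument, apply the monotonicity trick with Morse index $\le 3$, run the blow-up analysis, and contradict (ii). But the ingredient everything depends on, the choice of perturbation, is wrong, and with your choice Steps 2 and 3 fail.

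\textbf{Why your perturbation fails.} The two options you list are not equivalent. Rescaling $h$ by $(1+\varepsilon)$ keeps the problem geometric, since it can be absorbed by adding a constant to $u$ and rescaling $K$. Changing the coefficient of $\int_{\partial\mathbb{D}}u$ does not. Neither option works with the monotonicity trick, which needs the perturbing functional to have a sign and to be coercive, so that the (PS) sequences found for a.e.\ parameter are bounded. The term $\int_{\partial\mathbb{D}}he^{u/2}$ stays bounded along $u\equiv -t$, and $\int_{\partial\mathbb{D}}u$ has no sign.

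The geometric choice also creates no linking gap. In the constant-coefficient model your Step 2 builds on, both the functional and $A$ are invariant under $u\mapsto u\circ f_a+2\log|f_a'|$. Any $u$ with $A(u)=1$ can therefore be recentered to zero barycenter, so $\inf_M=\inf_{\{A=1\}}$. Since $\chi\circ\Lambda$ has nonzero degree, the sphere must meet $\{A=1\}$, so its energy cannot lie strictly below $\inf_M$. Concentration at $\partial\mathbb{D}$ does not by itself lower the energy. In your linear-term version with $\varepsilon<0$, the coefficient of $\int_{\partial\mathbb{D}}u$ drops below $2$ and concentration actually \emph{raises} the energy; that is the mountain-pass regime, not the linking one.

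\textbf{What the paper uses instead.} It takes $\mathcal{I}_\varepsilon=(\mathcal{I}+\varepsilon\mathcal{T})/(1+\varepsilon)$ with $\mathcal{T}(u)=\int_{\mathbb{D}}(\tfrac12|\nabla u|^2+e^u-u)\ge\pi$. This $\mathcal{T}$ is coercive, and for $\varepsilon<0$ it introduces the non-geometric coefficients $\tilde K_\varepsilon>0$ and $\tilde h_\varepsilon=1/(1+\varepsilon)>1$.

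The excess $\tilde h_\varepsilon-1>0$ gives the term $8\pi(\tilde h_\varepsilon-1)\log(1-\lambda^2)\to-\infty$ along bubbles centered at $\lambda p$. The bound $-2\int(K_\varepsilon+1)e^{\Psi}\le-\beta/(\phi^2-1)+O(1)$ handles large-area bubbles. This is where (i) is used: after adding a constant, $h>1$ on $\partial\mathbb{D}$ and $K_\varepsilon+1>r$ near $\partial\mathbb{D}$.

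Because $\tilde h_\varepsilon>1$, Lebedev--Milin no longer bounds $\mathcal{I}_\varepsilon$ from below on $\{A=1\}$. The lower bound on $M$ instead uses the Chen--Li inequality with $l=2$, which the barycenter constraint makes available.

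\textbf{Step 4.} The sign relation is not a generic output of \cite{JevnikarLopezSorianoMedinaRuiz2022}. It comes from the extra Kazdan--Warner terms $4\tilde K_n\int_{\mathbb{D}}x_1u_{a_n}+2(\tilde h_n-1)\int_{\partial\mathbb{D}}x_1u_{a_n}\to-10\pi\varepsilon_n/(1+\varepsilon_n)$. These are combined with the refined bound of Proposition \ref{prop:v_n_properties}, which yields $|\varepsilon_n|\le C(1-a_n)$.

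With your geometric perturbation one would instead get $\nabla\Phi(p)=0$. That also contradicts (ii), so Step 4 is not where your version breaks. Note, however, that bounded Morse index alone does not give the mass bound. You also need $S_0=\emptyset$, which follows from (i) because $\mathfrak{D}>1$ on $\partial\mathbb{D}$.
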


\subsection{The Perturbation Framework and Variational Settings}

As outlined in the introduction, the functional $\mathcal{I}$ does not seem to satisfy the (PS) condition globally, preventing a direct application of standard critical point theory. To overcome this difficulty, we first employ the monotonicity trick to a family of perturbed functionals $\mathcal{I}_\varepsilon$, finding critical points $u_\varepsilon$. Then, a solution to \eqref{eq:Problem} is found by passing to the limit as $\varepsilon \to 0$. Being more specific, recall that:
\begin{equation}\label{eq:perturbed_functional_compact}   \mathcal{I}_\varepsilon(u) =\frac{\mathcal{I}(u) + \varepsilon \, \mathcal{T}(u)}{1+\varepsilon}, 
\end{equation} 
where $\e$ is a parameter (not necessarily positive) with $|\e|$ small, and $\mathcal{T}: H^1(\mathbb{D}) \to \mathbb{R}^+$ is given by
\begin{equation*}\label{eq:PerturbationTerm}
  \mathcal{T}(u) =\int_{\mathbb{D}} \left( \frac{1}{2}|\nabla u|^2 + e^u - u \right).
\end{equation*}

The normalization factor $(1+\varepsilon)^{-1}$ in the definition of $\mathcal{I}_\varepsilon$ is chosen for notational convenience, to keep the coefficient $1/2$ for the Dirichlet energy term. Substituting the expressions for $\mathcal{I}$ and $\mathcal{T}$, the perturbed functional takes the form:
\begin{equation*}\label{eq:perturbed_functional}
    \mathcal{I}_\varepsilon(u) = \int_{\D} \left( \frac{1}{2}|\nabla u|^2+2\tilde{K}_\varepsilon u - 2K_\varepsilon(x)e^u \right) 
        + \int_{\partial\D} \left( 2\tilde{h}_\varepsilon u - 4h_\varepsilon(x)e^{u/2} \right),
\end{equation*}
with coefficients given by:
\begin{equation}\label{eq:perturbations}
    \tilde{K}_\varepsilon =-\frac{\varepsilon}{2(1+\varepsilon)}, \ \  
    K_\varepsilon(x) =\frac{K(x) - \varepsilon/2}{1+\varepsilon}, \ \ 
    \tilde{h}_\varepsilon =1- \frac{\e}{1+\varepsilon}, \ \  
    h_\varepsilon(x) =\frac{h(x)}{1+\varepsilon}.
\end{equation}

Let us point out that the sign of the perturbation terms, $\tilde{K}_\varepsilon$ and $\tilde{h}_\varepsilon - 1$, is determined by the sign of $\varepsilon$:
$$ \mathrm{sign}(\tilde{K}_\varepsilon) = \mathrm{sign}(\tilde{h}_\varepsilon - 1) = -\mathrm{sign}(\varepsilon). $$

The Euler-Lagrange equation associated with this functional $\mathcal{I}_\varepsilon$ is precisely the perturbed problem:
     \begin{equation}\label{eq:perturbed_problem_general}
        \left\{
        \begin{array}{ll}
            -\Delta u+2 \tilde{K}_\varepsilon=2K_\varepsilon(x)e^{u} &\text{in }\D,\\
            \frac{\partial u}{\partial \nu}+2 \tilde{h}_\varepsilon=2h_\varepsilon(x)e^{u/2} &\text{on }\partial\D.
        \end{array}\right.
    \end{equation}

\subsection{Analytic tools} 
        In order to perform a variational study of the energy functional $ \mathcal{I}_\varepsilon$, we need some preliminary results. We start by revisiting a weak version of the Moser–Trudinger inequality for the boundary term, that will help us to prove that under the assumptions of Theorem \ref{thm:existence_mountain_pass}, in a certain region, the functional is bounded from below.
        
        \begin{lemma}[Lebedev-Milin Inequality, \cite{OsgoodPhillipsSarnak1988}]\label{lem:lebedev_milin}
        For any function $u \in H^1(\D)$, the following trace inequality holds:
        $$
        16\pi\log\left(\int_{\partial\D} e^{u/2}\right) \leq \int_{\D}|\nabla u|^2+ 4\int_{\partial\D}u.
        $$
        \end{lemma}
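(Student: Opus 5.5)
The plan is to reduce the statement to the classical sharp Lebedev–Milin inequality, as in \cite{OsgoodPhillipsSarnak1988}, by a harmonic-extension reduction followed by a Fourier computation.

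\textbf{Normalization.} Testing $u\equiv 0$ gives $16\pi\log(2\pi)$ on the left and $0$ on the right, so the boundary integrals must be averages, $\fint_{\partial\D}=\frac{1}{2\pi}\int_{\partial\D}$. Equivalently, with the plain integrals the constant $16\pi\log(2\pi)$ must be added to the right-hand side. I will prove the averaged version,
$$\log\Bigl(\fint_{\partial\D} e^{u/2}\Bigr)\le \frac{1}{16\pi}\int_\D|\nabla u|^2+\frac{1}{4\pi}\int_{\partial\D}u ,$$
which is exactly the inequality of \cite{OsgoodPhillipsSarnak1988} after the substitution $w=u/2$. For the later uses in the paper (lower bounds for $\mathcal{I}_\e$) an additive constant is irrelevant.

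\textbf{Step 1 (reduction to harmonic functions and Fourier form).} Replacing $u$ by the harmonic extension $\tilde u$ of its trace leaves both boundary terms unchanged and does not increase $\int_\D|\nabla u|^2$, by Dirichlet's principle. So it suffices to treat harmonic $u$. By density and Fatou's lemma for the exponential term, we may also assume $u$ is smooth up to the boundary. Write $w=u/2$ and expand
$$w|_{\partial\D}=a_0+\sum_{k\ge1}(a_k\cos k\theta+b_k\sin k\theta).$$
Then $\fint_{\partial\D} w=a_0$ and $\int_\D|\nabla w|^2=\pi\sum_k k(a_k^2+b_k^2)$. The inequality becomes
$$\log\fint_{\partial\D} e^{w-a_0}\le \frac14\sum_{k\ge1}k(a_k^2+b_k^2).$$

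\textbf{Step 2 (complex form).} Introduce the holomorphic function $\phi(z)=\sum_{k\ge1}\alpha_k z^k$ with $\alpha_k=(a_k-ib_k)/2$, so that $w-a_0=2\,\mathrm{Re}\,\phi$ on $\partial\D$. Then $e^{w-a_0}=|e^{\phi}|^2$. Writing $e^{\phi}=\sum_{k\ge0}\beta_k z^k$, Parseval gives $\fint_{\partial\D}e^{w-a_0}=\sum_k|\beta_k|^2$. The right-hand side equals $\sum_k k|\alpha_k|^2$. The claim is therefore
$$\sum_{k\ge0}|\beta_k|^2\le \exp\Bigl(\sum_{k\ge1}k|\alpha_k|^2\Bigr),$$
which is the Lebedev–Milin exponential inequality.

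\textbf{Step 3 (the coefficient inequality, the main obstacle).} This is the only nontrivial step. I would follow the standard argument. Differentiate $e^{\phi}$ to get the recursion $n\beta_n=\sum_{k=1}^n k\alpha_k\beta_{n-k}$. By Cauchy–Schwarz,
$$n^2|\beta_n|^2\le\Bigl(\sum_{k=1}^n k^2|\alpha_k|^2\Bigr)\Bigl(\sum_{j=0}^{n-1}|\beta_j|^2\Bigr).$$
From this, an induction yields the Lebedev–Milin second inequality
$$\sum_{j=0}^{n}|\beta_j|^2\le (n+1)\exp\Bigl(\frac{1}{n+1}\sum_{k=1}^{n}\sum_{m=1}^{k}\bigl(m|\alpha_m|^2-\tfrac1m\bigr)\Bigr).$$
Letting $n\to\infty$ gives the stated bound. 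If this induction proves too cumbersome, I would instead use the alternative proof in \cite{OsgoodPhillipsSarnak1988}: the functional $\log\fint e^{w}-\frac{1}{4\pi}\int_\D|\nabla w|^2-\fint w$ is invariant under Möbius automorphisms of $\D$. One can therefore normalize the center of mass $\fint e^{w}x=0$, and then apply a Jensen/Aubin-type improved inequality. Either route yields the inequality, with equality for $u$ of the form $2\log|\mathrm{Jac}\,\sigma|$ with $\sigma$ a conformal automorphism.
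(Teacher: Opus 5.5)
Your normalization remark is correct and worth keeping: for $u\equiv 0$ the printed inequality reads $16\pi\log(2\pi)\le 0$, so the integral inside the logarithm must be the average $\frac{1}{2\pi}\int_{\partial\D}e^{u/2}$. Only that integral is averaged; the linear term must remain $4\int_{\partial\D}u$, as in your display, since constants give equality. This is the Osgood--Phillips--Sarnak form, which the paper only cites, and the discrepancy is harmless for its use on $M$. Steps 1 and 2 are also fine: they reduce the lemma exactly to the sharp first Lebedev--Milin inequality $\sum_{k\ge0}|\beta_k|^2\le\exp\bigl(\sum_{k\ge1}k|\alpha_k|^2\bigr)$.

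The gap is in Step 3. The decoupled bound $n^2|\beta_n|^2\le\bigl(\sum_{k\le n}k^2|\alpha_k|^2\bigr)\bigl(\sum_{j<n}|\beta_j|^2\bigr)$ does give the second Lebedev--Milin inequality, but letting $n\to\infty$ there does \emph{not} give the sharp bound. With $H_j=\sum_{m\le j}\frac1m$ one has $\frac{1}{n+1}\sum_{k=1}^{n}\sum_{m=1}^{k}\frac1m=H_{n+1}-1$, while $\frac{1}{n+1}\sum_{k=1}^{n}\sum_{m=1}^{k}m|\alpha_m|^2\to\sum_m m|\alpha_m|^2$. Hence the right-hand side converges to $e^{1-\gamma_{\mathrm E}}\exp\bigl(\sum_m m|\alpha_m|^2\bigr)$, where $\gamma_{\mathrm E}$ is Euler's constant and $e^{1-\gamma_{\mathrm E}}\approx 1.53$. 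Already for $\phi\equiv0$ the bounds are $2e^{-1/2},\,3e^{-5/6},\dots$, increasing to $e^{1-\gamma_{\mathrm E}}$ and never equal to $1$. So your argument only proves the lemma with an extra $16\pi(1-\gamma_{\mathrm E})$ on the right-hand side. That would suffice for the paper's application, but it is not the stated inequality.

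No cleverer iteration of that bound can fix this, because it discards the convolution structure. Take only $\alpha_1\neq0$ with $|\alpha_1|^2=a$; saturating the bound at every step gives $\sum_j|\beta_j|^2=\prod_{n\ge1}(1+a/n^2)=\frac{\sinh(\pi\sqrt a)}{\pi\sqrt a}=1+\frac{\pi^2}{6}a+O(a^2)$, which exceeds $e^a$ for small $a$.

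The missing idea is to apply Cauchy--Schwarz to the $n$ terms of the recursion, $|\beta_n|^2\le\frac1n\sum_{k=1}^n k^2|\alpha_k|^2|\beta_{n-k}|^2$. Then compare with the nonnegative coefficients $c_n$ of $\exp\bigl(\sum_k k|\alpha_k|^2z^k\bigr)$, which satisfy $nc_n=\sum_{k=1}^n k^2|\alpha_k|^2c_{n-k}$ and $c_0=1=|\beta_0|^2$. Induction gives $|\beta_n|^2\le c_n$, and summing (evaluating at $z=1$) gives exactly $\exp\bigl(\sum_k k|\alpha_k|^2\bigr)$.

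Your fallback does not close the gap as sketched either. After M\"obius normalization, an Aubin-type inequality only holds with a non-sharp constant. Reaching the sharp one would further require the existence of an extremal and a classification of solutions of its Euler--Lagrange equation. (Minor: the equality cases are $u=2\log|\sigma'|+c=\log|\mathrm{Jac}\,\sigma|+c$, not $2\log|\mathrm{Jac}\,\sigma|$.)
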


        In the framework of Theorem \ref{thm:existence_linking}, the preceding inequality does not suffice. Instead, we require an improved version applicable to functions whose mass is spread in several separated regions on the boundary. Such results are known as Chen-Li type inequalities, and the specific version used here follows from \cite[Corollary 2.11]{CruzRuiz2018}.

        \begin{lemma}\label{lem:chen-li}
            Let $l\in\N$ and $\Gamma_1,\Gamma_2,\ldots,\Gamma_l\subset\partial\D$ for which there exists $r>0$ such that $(\Gamma_i)^r\cap(\Gamma_j)^r=\emptyset$ if $i\neq j$, and $\gamma\in(0,\frac{1}{l})$ in such a way that $$\frac{\int_{\Gamma_i}e^{u/2}}{\int_{\partial\D}e^{u/2}}\ge\gamma,\, \forall i=1,\ldots,l,$$
            where $(\Gamma)^r=\left\{ x \in \D \, : \, dist(x,\Gamma)<r \right\}$.
            
            Then, for every $\delta>0$ there exists a constant $C\in\R$, depending on $r$, $\gamma$ and $\delta$, such that $$16\pi l \log\left(\int_{\partial\D}e^{u/2}\right) \leq (1+\delta)\int_{\D}|\nabla u|^2 +8l\int_\D u+C,\quad\forall u\in H^1(\D).$$
        \end{lemma}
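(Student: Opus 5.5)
The plan is to reproduce the classical Chen--Li localization argument in this boundary setting, with Lemma \ref{lem:lebedev_milin} playing the role of the basic (non-improved) inequality. The main obstacle is the lower-order $L^2$ error created by the cut-off functions, which I would treat in the last step by a spectral splitting.

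\emph{Step 1: reduction to zero mean.} Write $u=\bar u+w$, where $\bar u$ is the average of $u$ (taken over the same domain as the linear term in the statement), so that $w$ has zero average. Then
$$\log\Big(\int_{\partial\D}e^{u/2}\Big)=\frac{\bar u}{2}+\log\Big(\int_{\partial\D}e^{w/2}\Big),$$
and $16\pi l\cdot\frac{\bar u}{2}$ is exactly the linear term on the right-hand side of the inequality, up to normalization. It therefore suffices to prove
$$16\pi l\log\int_{\partial\D}e^{w/2}\le(1+\delta)\int_\D|\nabla w|^2+C$$
for zero-average $w$. The concentration hypothesis is unchanged, since the ratios $\int_{\Gamma_i}e^{u/2}/\int_{\partial\D}e^{u/2}$ do not change when a constant is added to $u$.

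\emph{Step 2: localization.} Choose cut-offs $g_i\in C^\infty(\overline{\D})$ with $0\le g_i\le1$, $g_i=1$ on $\Gamma_i$, $\operatorname{supp}g_i\subset(\Gamma_i)^{r}$, and $|\nabla g_i|\le C/r$; their supports are pairwise disjoint by assumption. Fix an $i$. By the hypothesis,
$$\int_{\partial\D}e^{w/2}\le\gamma^{-1}\int_{\Gamma_i}e^{w/2}\le\gamma^{-1}\int_{\partial\D}e^{g_iw/2}+C.$$
Applying Lemma \ref{lem:lebedev_milin} to $g_iw$ gives
$$16\pi\log\int_{\partial\D}e^{w/2}\le\int_\D|\nabla(g_iw)|^2+4\int_{\partial\D}g_iw+C.$$
Next expand the gradient term with Young's inequality:
$$\int|\nabla(g_iw)|^2\le(1+\delta/2)\int_\D g_i^2|\nabla w|^2+C_\delta\int_\D w^2|\nabla g_i|^2.$$
The linear term $4\int_{\partial\D}g_iw$ is absorbed by the trace inequality, since it is at most $\eta\|\nabla w\|_2^2+C_\eta$ for zero-average $w$. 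Summing over $i=1,\dots,l$ and using the disjointness of the supports, we obtain
$$16\pi l\log\int_{\partial\D}e^{w/2}\le(1+\delta/2+\eta)\int_\D|\nabla w|^2+C\int_\D w^2+C.$$

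\emph{Step 3: the $L^2$ term.} The remaining term $C\int_\D w^2$ is handled by a spectral splitting:
\begin{itemize}
\item Decompose $w=w_1+w_2$, where $w_1$ is the projection of $w$ onto the first $N$ Neumann eigenfunctions and $N$ is chosen so that $C\int w_2^2\le\frac{\delta}{4}\int|\nabla w_2|^2$.
\item Run Steps 1--2 with $w_2$ in place of $w$. This uses $e^{w/2}\le e^{\|w_1\|_\infty/2}e^{w_2/2}$ on the boundary and the corresponding reverse bound, so that the concentration hypothesis transfers to $w_2$ with $\gamma$ replaced by $\gamma e^{-\|w_1\|_\infty}$.
\item Estimate $\|w_1\|_\infty\le C_N\|\nabla w\|_2$, because $w_1$ lies in a finite-dimensional space.
\end{itemize}
This produces the extra error $16\pi l\,\|w_1\|_\infty$. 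Since $C_N\|\nabla w\|_2\le\eta\|\nabla w\|_2^2+C_{N,\eta}$, this error is absorbed into $\delta$ and the constant $C$.

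The delicate point is the order of the choices. Fix $\delta$ first, then choose $N$, then $\eta$. The dependence of $\gamma$ on $\|w_1\|_\infty$ must not destroy the estimate; I expect this to be resolved by the standard device of distinguishing two cases:
\begin{itemize}
\item if $\|w_1\|_\infty$ is large, the quadratic bound $\eta\|\nabla w\|_2^2$ dominates and absorbs it;
\item otherwise $\gamma$ is only changed by a bounded factor.
\end{itemize}
This case analysis is exactly the argument of \cite[Corollary 2.11]{CruzRuiz2018}, to which one can ultimately appeal.
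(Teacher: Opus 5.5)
Your argument is correct. The paper gives no proof of its own and simply quotes \cite[Corollary 2.11]{CruzRuiz2018}; your scheme (cut-off localization, Lemma~\ref{lem:lebedev_milin} applied to each $g_iw$, then a spectral splitting to absorb the $L^2$ remainder) is the standard Chen--Li argument behind results of that type.

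The closing two-case discussion is unnecessary. The parameter $\gamma$ enters Step~2 only through the additive constant $16\pi l\log(1/\gamma)$, so replacing $\gamma$ by $\gamma e^{-\|w_1\|_\infty}$ just produces a term linear in $\|w_1\|_\infty$, which you already absorb via $\|w_1\|_\infty\le C_N\|\nabla w\|_2$. Its coefficient is $24\pi l$ rather than $16\pi l$ once the factor $e^{\|w_1\|_\infty/2}$ from $e^{w/2}\le e^{\|w_1\|_\infty/2}e^{w_2/2}$ is included, which is harmless.
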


With Lemmas \ref{lem:lebedev_milin}, \ref{lem:chen-li} at hand, we will be able to prove the existence of solutions to \eqref{eq:perturbed_problem_general} for some $\e_n \to 0$. In order to prove convergence of those solutions, a blow-up analysis is required. For this, we need Kazdan-Warner identities for problem \eqref{eq:perturbed_problem_general}. These conditions are an extension of those established in \cite[Section 2]{JevnikarLopezSorianoMedinaRuiz2022} to our context. First, let us recall the following Pohozaev-type identity, depending on an arbitrary field $F$.
    
        \begin{lemma}\label{lem:pohosaev}
            Let $u$ be a solution of \eqref{eq:perturbed_problem_general}. Then given any vector field $F:\overline{\D}\to\R^2$, 
            \begin{equation*}
                \begin{split}
                    \int_{\partial\D}&\left[2K_\varepsilon e^u(F\cdot\nu)+(2h_\varepsilon e^{u/2}-2\tilde{h}_\varepsilon)(\nabla u\cdot F)-\frac{|\nabla u|^2}{2}F\cdot\nu\right]=\\
                    &=\int_\D\left[2\tilde{K}_\varepsilon\nabla u\cdot F+2e^u(\nabla K_\varepsilon\cdot F+K_\varepsilon\,\nabla\cdot F)+DF(\nabla u,\nabla u)-\nabla \cdot F\frac{|\nabla u|^2}{2}\right].
                \end{split}
            \end{equation*}
        \end{lemma}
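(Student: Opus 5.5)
We prove the identity by testing the equations against the vector field $\nabla u\cdot F$, in the spirit of the classical Pohozaev argument. Since the right-hand side of \eqref{eq:perturbed_problem_general} involves only $e^u$ and smooth coefficients, elliptic regularity makes $u$ smooth up to the boundary. All integrations by parts below are therefore justified, and one could alternatively argue by approximation.

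The starting point is to multiply the interior equation $-\Delta u+2\tilde K_\e=2K_\e e^u$ by $\nabla u\cdot F$ and integrate over $\D$. We then treat the three resulting terms separately.

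\emph{The Laplacian term.} Integrating by parts,
$$-\int_\D \Delta u\,(\nabla u\cdot F)=-\int_{\partial\D}\partial_\nu u\,(\nabla u\cdot F)+\int_\D \nabla u\cdot\nabla(\nabla u\cdot F).$$
Pointwise we have $\nabla u\cdot\nabla(\nabla u\cdot F)=D^2u(\nabla u,F)+DF(\nabla u,\nabla u)$. Moreover $D^2u(\nabla u,F)=\tfrac12 F\cdot\nabla|\nabla u|^2$, and a second integration by parts gives
$$\int_\D D^2u(\nabla u,F)=\int_{\partial\D}\frac{|\nabla u|^2}{2}F\cdot\nu-\int_\D(\nabla\cdot F)\frac{|\nabla u|^2}{2}.$$

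\emph{The exponential term.} Since $e^u\nabla u=\nabla e^u$, integration by parts yields
$$\int_\D 2K_\e e^u\,\nabla u\cdot F=\int_{\partial\D}2K_\e e^u F\cdot\nu-\int_\D 2e^u\bigl(\nabla K_\e\cdot F+K_\e\nabla\cdot F\bigr).$$

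\emph{The constant term.} The term $\int_\D 2\tilde K_\e\nabla u\cdot F$ is left as it is.

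It remains to substitute the boundary condition $\partial_\nu u=2h_\e e^{u/2}-2\tilde h_\e$ into the boundary term $-\int_{\partial\D}\partial_\nu u\,(\nabla u\cdot F)$. After that, we collect all boundary integrals on the left and all interior integrals on the right.

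The only delicate point is sign bookkeeping. Equating the two sides of the tested equation gives
$$\int_{\partial\D}\Bigl[2K_\e e^uF\cdot\nu+(2h_\e e^{u/2}-2\tilde h_\e)(\nabla u\cdot F)-\tfrac{|\nabla u|^2}{2}F\cdot\nu\Bigr]
=\int_\D\Bigl[2\tilde K_\e\nabla u\cdot F+2e^u(\nabla K_\e\cdot F+K_\e\nabla\cdot F)+DF(\nabla u,\nabla u)-\nabla\cdot F\tfrac{|\nabla u|^2}{2}\Bigr],$$
which is exactly the stated identity.
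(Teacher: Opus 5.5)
Your proposal is correct and follows the same route as the paper, which simply says to multiply by $\nabla u\cdot F$ and integrate by parts; your decomposition and substitution of the boundary condition produce exactly the stated identity, signs included. One minor precision: $K$ and $h$ are only $\mathcal{C}^2$, so elliptic regularity gives $u\in C^{2,\gamma}(\overline{\D})$ rather than $C^\infty$, which is enough for the integrations by parts provided $F$ is $C^1$.
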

        \begin{proof}
            The proof follows by multiplying \eqref{eq:perturbed_problem_general} by $\nabla u\cdot F$ and integrating by parts; see for instance \cite[Lemma 5.5]{LopezSorianoMalchiodiRuiz2019} or \cite[Lemma 2.6]{JevnikarLopezSorianoMedinaRuiz2022}.
        \end{proof}
        
        From this we can obtain the following Kazdan-Warner identity.
        \begin{proposition}\label{prop:Kazdan-Warner_ind}
             Let $u$ be a solution of \eqref{eq:perturbed_problem_general} then
            \begin{equation*}
                \begin{split}
                \int_{\partial\D}&\left[4x_2\partial_\tau h_\varepsilon e^{u/2}-2x_1(\tilde{h}_\varepsilon-1)u-2x_2\partial_\tau \tilde{h}_\varepsilon u+4x_1\tilde{h}_\varepsilon\right]=\\
                &=\int_\D\left[4x_1\tilde{K}_\varepsilon u-u\nabla\tilde{K}_\varepsilon\cdot F+e^u\nabla K_\varepsilon\cdot F-4x_1\tilde{K}_\varepsilon\right]
            \end{split}
            \end{equation*}
            where $F(x_1, x_2):=(1-x_1^2+x_2^2,-2x_1x_2)$. Note that, if $\tilde{K}_\varepsilon$ and $\tilde{h}_\varepsilon$ are constants, the above expression reduces to
            $$
                \int_{\partial\D}\left[4x_2\partial_\tau h_\varepsilon e^{u/2}-2x_1u(\tilde{h}_\varepsilon-1)\right]=\int_\D\left[4x_1u\tilde{K}_\varepsilon+e^u\nabla K_\varepsilon\cdot F\right].
            $$
        \end{proposition}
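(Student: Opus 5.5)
We will apply Lemma \ref{lem:pohosaev} with the specific field $F(x_1,x_2)=(1-x_1^2+x_2^2,-2x_1x_2)$. This $F$ is the conformal Killing field of the disk generating the Möbius flow along the $x_1$-direction. It satisfies $F\cdot\nu=0$ on $\partial\D$; indeed, for $x\in\partial\D$,
\[
F\cdot x = x_1 - x_1^3 + x_1x_2^2 - 2x_1x_2^2 = x_1\bigl(1-x_1^2-x_2^2\bigr)=0 .
\]
On $\partial\D$ it is tangential, with $F=2x_2\,\tau$ up to sign; a direct check gives $F\cdot\tau = -x_2+x_1^2x_2+x_2^3 = -2x_2^2$ at $x_1^2+x_2^2=1$. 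We will fix the sign convention carefully, since it matters for the final formula. Moreover $F$ is conformal, so $DF(\nabla u,\nabla u)-\tfrac12(\nabla\cdot F)|\nabla u|^2=0$, and $\nabla\cdot F=-4x_1$.

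With these facts, all terms with $F\cdot\nu$ on the boundary drop out, and the quadratic gradient terms in the interior cancel. The identity reduces to
\[
\int_{\partial\D}(2h_\e e^{u/2}-2\tilde h_\e)\,\partial_F u=\int_\D\bigl[2\tilde K_\e\nabla u\cdot F+2e^u\nabla K_\e\cdot F-8x_1K_\e e^u\bigr].
\]
We then treat each side by integration by parts.

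On the boundary, $\partial_F u$ is a tangential derivative. We write $2h_\e e^{u/2}\partial_F u=4h_\e\,\partial_F(e^{u/2})$ and integrate by parts along the circle, moving the derivative onto $h_\e$ and onto the coefficient $\pm2x_2$. Since $\partial_\tau x_2=x_1$, this produces the term $4x_2\partial_\tau h_\e e^{u/2}$ together with a term $x_1h_\e e^{u/2}$. Similarly, $-2\tilde h_\e\partial_F u$ becomes $x_2\partial_\tau\tilde h_\e\,u$ and $x_1\tilde h_\e\,u$ terms.

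In the interior, the term $-8x_1K_\e e^u$ is handled by the equation, $2K_\e e^u=-\Delta u+2\tilde K_\e$. This gives
\[
-4x_1(-\Delta u+2\tilde K_\e),
\]
and integrating $x_1\Delta u$ by parts twice, using $\Delta x_1=0$, yields
\[
\int_{\partial\D}\bigl(x_1\partial_\nu u-u\,\partial_\nu x_1\bigr)=\int_{\partial\D} x_1\bigl(\partial_\nu u-u\bigr).
\]
We then replace $\partial_\nu u=2h_\e e^{u/2}-2\tilde h_\e$ by the boundary condition. The resulting $x_1h_\e e^{u/2}$ term should cancel the one from the tangential integration by parts, and the $x_1\tilde h_\e$ and $x_1 u$ terms combine into $-2x_1(\tilde h_\e-1)u+4x_1\tilde h_\e$, up to the bookkeeping of constants.

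The remaining interior term $2\tilde K_\e\nabla u\cdot F$ is integrated by parts. There is no boundary contribution because $F\cdot\nu=0$, so it equals
\[
-2\int_\D u\bigl(\nabla\tilde K_\e\cdot F+\tilde K_\e\nabla\cdot F\bigr)=\int_\D\bigl(-2u\nabla\tilde K_\e\cdot F+8x_1\tilde K_\e u\bigr).
\]
After dividing the whole identity by $2$, this gives the terms $4x_1\tilde K_\e u-u\nabla\tilde K_\e\cdot F$, which match the statement.

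The main obstacle is purely the bookkeeping: getting the orientation and sign of $F$ on $\partial\D$ right, and checking that the $e^{u/2}x_1$ terms cancel exactly so that only $\partial_\tau h_\e$ survives. We would verify the final identity against the known case $\tilde K=0$, $\tilde h=1$, where it must reduce to the Kazdan--Warner identity of \cite{JevnikarLopezSorianoMedinaRuiz2022} quoted in the introduction. The constant-coefficient reduction then follows immediately from $\nabla\tilde K_\e=0$, $\partial_\tau\tilde h_\e=0$ and $\int_{\partial\D}x_1=\int_\D x_1=0$.
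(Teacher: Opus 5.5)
Your proposal is correct and follows essentially the paper's route: the Pohozaev identity with the conformal field $F$, then $F\cdot\nu=0$ and conformality to kill the gradient terms, tangential integration by parts on $\partial\D$, and elimination of $\int_\D x_1K_\e e^u$ by testing the equation against $x_1$. The only difference there is that you use Green's second identity, where the paper integrates by parts once and then uses $\int_{\partial\D}x_2\partial_\tau u=-\int_{\partial\D}x_1u=-\int_\D\partial_{x_1}u$. One slip needs fixing: on $\partial\D$ you have $F\cdot\tau=-x_2(1+|x|^2)=-2x_2$ (not $-2x_2^2$), so $F=-2x_2\tau$, and this is exactly the sign for which the two $x_1h_\e e^{u/2}$ terms cancel (with the opposite sign they would add), so the sign you left open is determined and the computation closes.
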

	\begin{proof}
	    The idea is to consider the variation along the conformal transformations that keep fixed the point $p=(1,0)$. Using Lemma \ref{lem:pohosaev} and following the same ideas as Proposition 2.7 of \cite{JevnikarLopezSorianoMedinaRuiz2022}, and noting that on $\partial \D$ one has that $F \cdot \nu =0$ and $F=-2x_2\tau$ , we obtain 
            $$
                \int_{\partial\D}-2x_2\,\partial_\tau u\left(h_\varepsilon e^{u/2}-\tilde{h}_\varepsilon\right)=\int_\D\left(\tilde{K}_\varepsilon\nabla u\cdot F+e^u\nabla K_\varepsilon\cdot F-4x_1e^uK_\varepsilon\right).
            $$
            We now multiply \eqref{eq:perturbed_problem_general} by $2x_1$ and integrate to obtain 
            \begin{equation*}
                \begin{split}
                    \int_\D4x_1K_\varepsilon e^u&=\int_\D\left(2x_1(-\Delta u)+4x_1\tilde{K}_\varepsilon\right)=\int_\D 2\partial_{x_1} u-\int_{\partial\D}2x_1\nabla u\cdot\nu+\int_\D4x_1\tilde{K}_\varepsilon\\
                    &=\int_\D2\partial_{x_1} u+\int_{\partial\D}4x_1\tilde{h}_\varepsilon-\int_{\partial\D}4x_1h_\varepsilon e^{u/2}+\int_\D4x_1\tilde{K}_\varepsilon.
                \end{split}
            \end{equation*}
            Observe also that, integrating by parts,
            $$\int_{\partial\D}x_2\partial_\tau u=-\int_{\partial\D}x_1u=-\int_{\D}\partial_{x_1} u.$$
            Putting together
            \begin{equation*}
                \begin{split}
                &\int_{\partial\D}-2x_2\,\partial_\tau u\left(h_\varepsilon e^{u/2}-\tilde{h}_\varepsilon\right)=\\
                &=\int_\D\left(\tilde{K}_\varepsilon\nabla u\cdot F+e^u\nabla K_\varepsilon\cdot F\right)-\int_\D2\partial_{x_1} u-\int_{\partial\D}4x_1\tilde{h}_\varepsilon+\int_{\partial\D}4x_1h_\varepsilon e^{u/2}-\int_\D4x_1\tilde{K}_\varepsilon.
                \end{split}
            \end{equation*}
            Integrating by parts
            \begin{equation*}
                \begin{split}
                \int_{\partial\D}&\left(4x_2\partial_\tau h_\varepsilon e^{u/2}+4x_1h_\varepsilon e^{u/2}-2x_1\tilde{h}_\varepsilon u-2x_2\partial_\tau \tilde{h}_\varepsilon u-4x_1h_\varepsilon e^{u/2}+4x_1\tilde{h}_\varepsilon+2x_1u\right)=\\
                &=\int_\D\left(-u\nabla\tilde{K}_\varepsilon\cdot F+4x_1\tilde{K}_\varepsilon u+e^u\nabla K_\varepsilon\cdot F-4x_1\tilde{K}_\varepsilon\right).
            \end{split}
            \end{equation*}
            Then
            \begin{equation*}
                \begin{split}
                \int_{\partial\D}&\left(4x_2\partial_\tau h_\varepsilon e^{u/2}-2x_1(\tilde{h}_\varepsilon-1)u-2x_2\partial_\tau \tilde{h}_\varepsilon u+4x_1\tilde{h}_\varepsilon\right)=\\
                &=\int_\D\left(4x_1\tilde{K}_\varepsilon u-u\nabla\tilde{K}_\varepsilon\cdot F+e^u\nabla K_\varepsilon\cdot F-4x_1\tilde{K}_\varepsilon\right).
            \end{split}
            \end{equation*}
	\end{proof}

    %%%%%%%%%%%%%%%%%%%%%%%%%%%%%%%%%%%%%%%%%%%%%%%%%%%%%%%%%%%%%%%%%%%%%%%%%%%%%%%%%%%%%%%%%%%%%%%%%%%%%%%%%%%%%%%%%%%%%%%%%%%%%%%%%%%%%%%%%%%%%%%%%%%%%%%%%%%%%%%%%%%%%%%%%%%%%%%%%%%%%%%%%%%%%%%%%%%%%%%% Section 3 %%%%%%%%%%%%%%%%%%%%%%%%%% %%%%%%%%%%%%%%%%%%%%%%%%%%%%%%%%%%%%%%%%%%%%%%%%%%%%%%%%%%%%%%%%%%%%%%%%%%%%%%%%%%%%%%%%%%%%%%%%%%%%%%%%%%%%%%%%%%%%%%%%%%%%%%%%%%%%%%%%%%%%%%%%%%%%%%%%%%%%%%%%%%%%%%%%%%%%%%
	
    \section{Existence of solutions of the perturbed problems} \label{sec:Existence-result}
This section is devoted to the first part of our arguments, namely, the construction of solutions to a family the perturbed problems \eqref{eq:perturbed_problem_general}. We show that, for $|\e|$ sufficiently small, the associated functional $\mathcal{I}_\varepsilon$ possesses a rich variational structure that guarantees the existence of a critical point. As outlined in the introduction, the geometry of the functional depends critically on the sign of the perturbation parameter $\e$. Consequently, our analysis is divided into two distinct regimes: we will show a mountain-pass geometry for positive $\e$, whereas a three-dimensional linking structure is exhibited if $\e<0$. In both scenarios, the (PS) condition seems out of reach, and we will find solutions by implementing Struwe's monotonicity trick. In particular, we will do that keeping control on the Morse index of the solutions.

\begin{theorem}\label{prop:ConvergenceSubsequence}
    Assume \eqref{eq:assumption_H} and that there is a point $x_0 \in \partial\D$ with  $h(x_0) >  \sqrt{|K(x_0)|}$. Then, there exists a sequence $\varepsilon_n \to 0$, $\e_n>0$, and $u_n$ a solution to \eqref{eq:perturbed_problem_general} for $\e=\e_n$, such that $\operatorname{ind}(u_n) \leq 1$.
\end{theorem}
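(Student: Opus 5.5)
The plan is to show a mountain-pass geometry for $\mathcal{I}_\varepsilon$, uniformly for $\varepsilon$ in a small interval $(0,\varepsilon_0]$, and then to apply Struwe's monotonicity trick with Morse index control, in the form of \cite{BellazziniRuiz23, BorthwickChangJeanjeanSoave2023}. The trick needs a monotone dependence on a parameter. Dividing by a positive factor does not change critical points, so I would work with $J_\varepsilon := (1+\varepsilon)\mathcal{I}_\varepsilon = \mathcal{I} + \varepsilon\mathcal{T}$. Because $\mathcal{T}\ge 0$ (indeed $e^u-u\ge 1$), the map $\varepsilon\mapsto J_\varepsilon(u)$ is nondecreasing. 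The abstract result then gives the following: for almost every $\varepsilon\in(0,\varepsilon_0]$, the mountain-pass level $c_\varepsilon$ is attained by a bounded Palais--Smale sequence whose approximate Morse index is at most $1$.

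\textbf{Step 1: a uniform lower barrier.} For $\varepsilon>0$ the term $\varepsilon\int_{\mathbb{D}}(\tfrac12|\nabla u|^2+e^u-u)$ makes the functional coercive on a neighbourhood of a fixed base point, for instance $u_0$ a large negative constant or a local minimizer-like function. I would take two points $u_0$ and $u_1$ in $H^1(\mathbb{D})$ and a radius $R$ such that
\[
\inf_{\|u-u_0\|=R} J_\varepsilon \;>\; \max\{J_\varepsilon(u_0),J_\varepsilon(u_1)\}
\]
holds uniformly for $\varepsilon\in(0,\varepsilon_0]$. The lower bound on the sphere would come from the Lebedev--Milin inequality, Lemma \ref{lem:lebedev_milin}, which controls the boundary term $-4\int_{\partial\mathbb{D}} h e^{u/2}$ by the Dirichlet energy plus $4\int_{\partial\mathbb{D}}u$. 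The interior term $-2\int_{\mathbb{D}} K e^u$ is nonnegative since $K<0$. The remaining linear terms are handled through the $\varepsilon$-perturbation and the boundary mean.

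\textbf{Step 2: a low-energy point far away.} Here the hypothesis $h(x_0)>\sqrt{|K(x_0)|}$ enters. I would test with the bubble concentrating at $x_0$,
\[
u_\lambda(x)=2\log\frac{2\lambda\, c}{\lambda^2|x-x_0|^2 + \ldots},
\]
that is, the pull-back of the hyperbolic/horocycle-type solution of the half-plane Liouville problem with curvatures $K(x_0)$ and $h(x_0)$. The functional $\mathcal{I}$ along $u_\lambda$ should behave like $-(\text{const})\log\lambda$, and the constant is strictly positive exactly when $h(x_0)^2+K(x_0)>0$. This holds here, since $\mathfrak{D}(x_0)>1$, so $\mathcal{I}(u_\lambda)\to-\infty$. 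The perturbation $\varepsilon\mathcal{T}(u_\lambda)$ is of order $\varepsilon\log\lambda$, so for a fixed large $\lambda$ and $\varepsilon\le\varepsilon_0$ small we still have $J_\varepsilon(u_1)<\inf_{\text{sphere}}J_\varepsilon$ with $u_1=u_\lambda$. This is the main computational step. It needs the explicit energy expansion of the boundary bubble, with care that the $-\int_{\mathbb{D}}Ke^u$ contribution, which equals a positive area term, does not dominate the negative length term. I expect the quantitative comparison in Steps 1–2 (barrier versus bubble energy) to be the main obstacle.

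\textbf{Step 3: passing to the limit in $k$.} For each such $\varepsilon$ we have a bounded Palais--Smale sequence $v^k_\varepsilon$. The compactness of the trace embedding $H^1(\mathbb{D})\hookrightarrow L^p(\partial\mathbb{D})$, together with Moser--Trudinger control of $e^{v}$ and $e^{v/2}$ on bounded sets, shows that the sequence converges strongly up to a subsequence. The limit is a critical point $u_\varepsilon$ of $\mathcal{I}_\varepsilon$, hence a solution of \eqref{eq:perturbed_problem_general}, and the index bound passes to the limit, giving $\operatorname{ind}(u_\varepsilon)\le 1$. Choosing $\varepsilon_n\to0^+$ in the full-measure set yields the claim.
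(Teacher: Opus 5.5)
\textbf{There is a genuine gap in the min-max geometry (Steps 1--2).} Your use of $J_\e=\mathcal{I}+\e\mathcal{T}$, its monotonicity in $\e$ (from $\mathcal{T}\ge 0$), the monotonicity trick with Morse-index control, and Step 3 all match the paper. The geometry you propose in Steps 1 and 2, however, does not work.

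\textbf{Step 1: a sphere cannot be the barrier.} No sphere centred at a base point can separate the endpoints. For $u\in H^1(\D)$ and a constant $c>0$ one has
\[
J_\e(u-c)-J_\e(u)=-(1-e^{-c})\int_{\D}(2|K|+\e)e^{u}-(4-\e)\pi c+4(1-e^{-c/2})\int_{\partial\D}h\,e^{u/2}.
\]
This is negative for every $c>0$ when $u_0$ is a large negative constant, and negative for all large $c$ whatever $u_0$ is. Since $u_0-c$ lies on $\{\|u-u_0\|=R\}$ for a suitable $c$, the inequality $\inf_{\|u-u_0\|=R}J_\e>J_\e(u_0)$ fails for every $R$ when $u_0$ is a large negative constant. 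For any other $u_0$, that inequality would force, by weak lower semicontinuity on the ball, a local minimizer of $J_\e$, which is already a solution; so it cannot be an input.

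The missing idea is to separate $u_0$ from $u_1$ by a level set of the boundary length, $M=\{u:\int_{\partial\D}e^{u/2}=1\}$. On $M$ the Lebedev--Milin inequality gives exactly $\frac{1}{2}\int_{\D}|\nabla u|^2+2\int_{\partial\D}u\ge 8\pi\log 1=0$. Hence $\mathcal{I}\ge -4\max h$ on $M$, and $\mathcal{I}_\e\ge \mathcal{I}/(1+\e)$ there because $\mathcal{T}\ge 0$. The constant $u_0=-n$ has length $2\pi e^{-n/2}<1$ and $\mathcal{I}_\e(-n)=-\frac{(4-\e)\pi n}{1+\e}+o(1)$. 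A low-energy $u_1$ with length $>1$ lies on the other side. Every path between them then crosses $M$ by continuity of $u\mapsto\int_{\partial\D}e^{u/2}$, and the choices of $M$, $u_0$, $u_1$ are independent of $\e$, as the trick requires.

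\textbf{Step 2: the asymptotic you rely on is false.} The constant-coefficient functional is invariant under Möbius maps of the disk. So the exact bubble for the frozen curvatures $(K(x_0),h(x_0))$ has energy independent of its concentration parameter; compare $\mathcal{J}_{\mathfrak{h}}(\Psi(\cdot,\phi,\lambda))=-8\pi(1+\log\frac{\phi}{2})$ in Section 3. In your terms, $\frac{1}{2}\int_{\D}|\nabla u_\lambda|^2\approx 8\pi\log\lambda$ and $2\int_{\partial\D}u_\lambda\approx -8\pi\log\lambda$ cancel exactly; this is the sharpness of Lebedev--Milin. Hence $\mathcal{I}(u_\lambda)$ stays bounded, and there is no $-(\mathrm{const})\log\lambda$ term whose sign depends on $h^2+K$.

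The energy actually goes to $-\infty$ along the horocycle degeneration. Fix the concentration parameter close to $1$, so that length and area (both spread along $\partial\D$ by the Poisson kernel) are localized near $x_0$. Then let the model boundary curvature $\mathfrak{h}\downarrow\sqrt{|K(x_0)|}$, i.e.\ $\phi\to 1$. Length and area then diverge like $(\phi^2-1)^{-1}$. After normalizing $K(x_0)=-1$, the energy is $-8\pi(1+\log\frac{\phi}{2})-8\pi\frac{h(x_0)-\mathfrak{h}}{\sqrt{\mathfrak{h}^2-1}}$, up to localization errors that are a small multiple of $(\phi^2-1)^{-1}$. It tends to $-\infty$ precisely because $\mathfrak{D}(x_0)>1$. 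This is the estimate the paper takes from the appendix of \cite{LopezSorianoMalchiodiRuiz2019}; compare the term $-\beta/(\phi^2-1)$ in the linking construction.

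Consequently $\e\mathcal{T}(u_1)$ is of order $\e(\phi^2-1)^{-1}$, not $\e\log\lambda$, so it is the same order as the gain. It is absorbed only because $h_\e/\sqrt{|K_\e|}>1$ still holds at $x_0$ for $\e$ small relative to $\mathfrak{D}(x_0)-1$.
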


\begin{theorem}\label{prop:ConvergenceSubsequence2}
  Assume \eqref{eq:assumption_H} and that ${\min_{\partial\mathbb{D}} h > \max_{\partial\mathbb{D}} \sqrt{|K|}}$. Then, there exists a sequence $\varepsilon_n \to 0$, $\e_n<0$, and $u_n$ a solution to \eqref{eq:perturbed_problem_general} for $\e=\e_n$, such that $\operatorname{ind}(u_n) \leq 3$.
\end{theorem}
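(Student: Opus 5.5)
We prove Theorem \ref{prop:ConvergenceSubsequence2} by exhibiting a three-dimensional linking structure for $\mathcal{I}_\varepsilon$, uniform in $\varepsilon<0$ with $|\varepsilon|$ small. We then apply Struwe's monotonicity trick with Morse index control \cite{Jeanjean1999,BellazziniRuiz23,BorthwickChangJeanjeanSoave2023}. Since $\varepsilon<0$, we have $\tilde K_\varepsilon>0$ and $\tilde h_\varepsilon>1$, so the linear terms $2\tilde K_\varepsilon u$ and $2\tilde h_\varepsilon u$ in $\mathcal{I}_\varepsilon$ are coercive in the direction $u\to-\infty$.

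\emph{Step 1: the comparison model.} Fix constants $k,\bar h$ with $\bar h>\max h$ and $-k<\min K$, so that $h_\varepsilon\le \bar h$ and $K_\varepsilon\ge -k$. By (i) we may also take $\bar h^2>k$. Then $\mathcal{I}_\varepsilon\ge \mathcal{J}_\varepsilon$, where $\mathcal{J}_\varepsilon$ is the constant-coefficient functional. Its critical points are explicit: they are the images under Möbius maps of the disk of the radial solution corresponding to a circle in the hyperbolic plane with curvature $\bar h/\sqrt k$ (the case $\varepsilon=0$). This family is parametrized by $\D$ and degenerates as the point $a\in\D$ goes to $\partial\D$, where the functions concentrate at a boundary point. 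Because $\bar h>\sqrt{k}$ and the perturbation has a sign, one expects the low sublevels of $\mathcal{J}_\varepsilon$, hence of $\mathcal{I}_\varepsilon$, to retract onto "concentrated" configurations near $\partial\D$. We plan to use the Chen–Li inequality (Lemma \ref{lem:chen-li} with $l=2$) to show the following: if $\mathcal{I}_\varepsilon(u)\le -L$ with $L$ large, then the boundary mass $e^{u/2}$ cannot be spread in two separated arcs. The map $u\mapsto \int_{\partial\D} x e^{u/2}/\int_{\partial\D} e^{u/2}$ then sends $\{\mathcal{I}_\varepsilon\le -L\}$ into a neighborhood of $\partial\D$, that is, onto a circle.

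\emph{Step 2: the linking sets.} Using (i) ($\min h>\max\sqrt{|K|}$ everywhere, not only at one point), we build for every $p\in\partial\D$ and every scale $\lambda\to\infty$ bubbles $\varphi_{p,\lambda}$ of boundary type, that is, Möbius pull-backs of the radial model. We also add a translation parameter $t\in\R$, giving $\varphi_{p,\lambda}+t$. Condition (i) together with $\varepsilon<0$ is what makes $\mathcal{I}_\varepsilon(\varphi_{p,\lambda})\to-\infty$ uniformly in $p$ as $\lambda\to\infty$, and also $\mathcal{I}_\varepsilon\to -\infty$ for $t\to\pm\infty$ appropriately. The resulting three-parameter family $(p,\lambda,t)$, compactified, forms a set $Q$ homeomorphic to a 3-ball (solid torus collapsed in the interior direction: $\D\times[-T,T]$). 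Its boundary lies in $\{\mathcal{I}_\varepsilon\le -L\}$. Step 1 shows that the boundary map to $\partial\D\times\{\text{sign of }t\}$ is essential, so it cannot be contracted within the low sublevel. Hence $c_\varepsilon:=\inf_{\gamma}\max_{Q}\mathcal{I}_\varepsilon\circ\gamma> -L\ge \max_{\partial Q}\mathcal{I}_\varepsilon$, with both bounds uniform in $\varepsilon\in(-\varepsilon_0,0)$.

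\emph{Step 3: monotonicity trick.} We write $\mathcal{I}_\varepsilon$ in the form $(1+\varepsilon)\mathcal{I}_\varepsilon=\mathcal{I}+\varepsilon\mathcal{T}$ with $\mathcal{T}\ge0$. Then $\varepsilon\mapsto(1+\varepsilon)c_\varepsilon$ is monotone, hence differentiable almost everywhere. At such points, the standard Struwe–Jeanjean argument produces bounded (PS) sequences. Since the functional is compact on bounded sets (by Moser–Trudinger and the trace embedding), these converge to a critical point $u_\varepsilon$. The version of \cite{BellazziniRuiz23,BorthwickChangJeanjeanSoave2023} gives Morse index at most the dimension of the linking family, namely $\operatorname{ind}(u_\varepsilon)\le3$. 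Choosing such $\varepsilon_n\uparrow0$ concludes the proof. The main obstacle is Step 1/2: the topological non-triviality of the low sublevel. Here we would first check rigorously the comparison with the constant-coefficient case, where the sublevel structure can be computed from the explicit Möbius family.
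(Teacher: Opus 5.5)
Your Step 3 (monotonicity trick with Morse index control) matches the paper, but Steps 1--2, which carry the geometry, do not work as stated.

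\textbf{Step 1: the picture of the low sublevels is wrong.} For $\varepsilon<0$ the constants satisfy $\mathcal{I}_\varepsilon(-n)=-2n\pi(\tilde K_\varepsilon+2\tilde h_\varepsilon)+o(1)\to-\infty$; you note yourself that the linear terms push the energy down as $u\to-\infty$. The radial large-mass profiles (the paper's $u_3=\Psi(\cdot,1+\sigma,0)$) also have arbitrarily low energy. Both have boundary mass spread uniformly over $\partial\D$ and barycenter $0$.
\begin{itemize}
\item So no Chen--Li argument can show that low energy forces concentration. Lemma \ref{lem:chen-li} only yields a lower bound on $\mathcal{I}_\varepsilon$ once the total mass $\int_{\partial\D}e^{u/2}$ is normalized.
\item Even if Step 1 held, a map from the sublevel set to a circle cannot detect an essential $2$-sphere, since $\pi_2(S^1)=0$. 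It therefore could not support a $3$-dimensional linking.
\item What is needed is the constraint set $M=\{A(u)=1,\ B(u)=0\}$, which fixes both the total boundary mass and the unnormalized barycenter. On $M$, Chen--Li with $l=2$ gives $\mathcal{I}_\varepsilon\ge\frac15\int_\D|\nabla u|^2+C$. Linking is then detected by the degree of the $\R^3$-valued map $\chi=(A-1,B)$.
\item Your comparison $\mathcal{I}_\varepsilon\ge\mathcal{J}_\varepsilon$ has the $K$-inequality backwards: $K_\varepsilon\ge -k$ gives $-2K_\varepsilon e^u\le 2ke^u$. The paper uses the constant-coefficient functional differently, through the exact splitting $\mathcal{I}_\varepsilon=\mathcal{J}_{\mathfrak h}+\mathcal{B}_{\varepsilon,\mathfrak h}$, to bound $\mathcal{I}_\varepsilon$ from \emph{above} on test functions.
\end{itemize}

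\textbf{Step 2: the large-mass part of your sphere fails.} Adding a constant $t\to+\infty$ raises the energy. $\mathcal{I}_\varepsilon(u+t)$ contains the term $2e^{t}\int_\D|K_\varepsilon|e^{u}$, which dominates $-4e^{t/2}\int_{\partial\D}h_\varepsilon e^{u/2}$ and the linear terms. Hence $\mathcal{I}_\varepsilon(\varphi+t)\to+\infty$, and the cap $\D\times\{T\}$ is not in a low sublevel; for instance it contains the non-concentrated profile plus $T$.

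The paper reaches the large-mass pole differently. It deforms the constant-curvature bubbles $\Psi(\cdot,\phi,\lambda)$ toward the horocycle limit $\phi\downarrow 1$, along which
$\mathcal{B}_{\varepsilon,\mathfrak h}\le \alpha\log(1-\lambda^2)-\frac{\beta}{\phi^2-1}+O\bigl(\log\frac{1}{\phi^2-1}\bigr)$.
Here $\beta=8\pi r$ comes from $h_\varepsilon>\mathfrak h_0+r$ and $K_\varepsilon+1>r$ near $\partial\D$, after rescaling.
\begin{itemize}
\item This is where $\min h>\max\sqrt{|K|}$ is really used, and it is needed globally because the radial endpoint $u_3$ spreads its mass over all of $\partial\D$.
\item By contrast, the energy drop of concentrated bubbles comes from the sign of $\varepsilon$, via $\alpha_\varepsilon\log(1-\lambda^2)$ with $\alpha_\varepsilon=8\pi(\tilde h_\varepsilon-1)>0$. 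Your proposal attributes it to condition (i).
\item Since $\alpha_\varepsilon\to0$ as $\varepsilon\to0^-$, the uniformity on $(-\varepsilon_0,0)$ you claim is not available. The paper builds a $\delta$-dependent sphere valid for $\varepsilon\in(-\delta,-\delta/2)$, applies the monotonicity trick on that interval, and then lets $\delta\to0$.
\end{itemize}

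The correct sphere is the suspension of the circle of concentrated bubbles, running from the constant $u_0=3\log\sigma$ to $u_3$. One then shows $\chi\circ\Lambda$ is homotopic to the identity of $\mathbb{S}^2$, using that $B(\gamma_p(\tau))$ is a positive multiple of $p$.
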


In both theorems above, $\operatorname{ind}(u_n)$ denotes the Morse index of $u_n$, defined as:
	\begin{equation}\label{eq:morseindex}
		\begin{split}
			\operatorname{ind}(u_n)=\max \{ &\dim E, \ E \subset H^1(\mathbb{D}) \text{ vector subspace: } \\
			&\mathcal{I}_{\e_n}''(u_n)|_{E\times E} \text{ is negative definite} \}.
		\end{split}
	\end{equation}

\subsection{Mountain-pass geometry for $\boldsymbol{\varepsilon>0}$}
In this section, we prove that if $\e>0$ the functional $\mathcal{I}_{\e}$ has a mountain-pass geometric structure. The argument essentially follows that developed in \cite{LopezSorianoMalchiodiRuiz2019} and is included here for the sake of completeness.

    \begin{proposition}\label{prop: Mountain-pass}
   Under the assumptions of Theorem \ref{prop:ConvergenceSubsequence}, there exist $\e_0>0$, a set $M\subset H^1(\mathbb{D})$, two points $u_0, u_1 \in H^1(\mathbb{D})$, and a class of admissible paths:
    $$
    \Gamma =\left\{ \eta \in C^0([0,1]; H^1(\mathbb{D})) : \eta(0) = u_0, \eta(1) = u_1 \right\},
    $$
    satisfying the following properties for any $\e \in (0, \e_0)$:
    \begin{enumerate}
        \item[\emph{(i)}] $\mathcal{I}_\varepsilon|_M\geq \overline{C}$ for some constant $\overline{C}$;
        \item[\emph{(ii)}] $\max\{\mathcal{I}_\varepsilon(u_0), \mathcal{I}_\varepsilon(u_1)\} < \inf_M\mathcal{I}_\varepsilon$;
        \item[\emph{(iii)}] $\eta([0,1])\cap M\neq\emptyset$ for any $\eta\in\Gamma$.
    \end{enumerate}

    The min-max level $c$ is then defined by:
    $$
    c_\e =\inf_{\eta \in \Gamma} \max_{t \in [0,1]} \mathcal{I}_\varepsilon(\eta(t)) \geq \inf_M\mathcal{I}_\varepsilon > \max\{\mathcal{I}_\varepsilon(u_0), \mathcal{I}_\varepsilon(u_1)\}.
    $$
\end{proposition}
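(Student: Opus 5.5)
We use the mountain-pass construction of \cite{LopezSorianoMalchiodiRuiz2019}, adapted to $\e>0$. The set $M$ is a level set of the boundary length. We fix a small $\delta>0$ and put
$$M=\Big\{u\in H^1(\D):\ \int_{\partial\D}e^{u/2}=\delta\Big\}.$$
This splits $H^1(\D)$ into a region $\{\int_{\partial\D}e^{u/2}<\delta\}$ and a region $\{\int_{\partial\D}e^{u/2}>\delta\}$. The functional $u\mapsto\int_{\partial\D}e^{u/2}$ is continuous on $H^1(\D)$ by the trace embedding and Moser--Trudinger. So any path from $u_0$ in the first region to $u_1$ in the second crosses $M$, which gives (iii).

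\emph{Lower bound on $M$.} Write $\tilde h_\e=1-\e/(1+\e)<1$ for $\e>0$. The key observation is that $-2K_\e e^u\ge0$ and $2\tilde K_\e u\ge -\frac{\e}{1+\e}(e^u)$ up to constants, since $\tilde K_\e<0$ and $-u\ge -e^u$. The leftover $e^u$ is absorbed by $-2K_\e e^u\ge (|K|+\e/2)\,2e^u/(1+\e)$, because $\e$ is small and $K<0$. Hence
$$\mathcal I_\e(u)\ge \frac12\int_\D|\nabla u|^2+2\tilde h_\e\int_{\partial\D}u-4\|h\|_\infty\int_{\partial\D}e^{u/2}-C.$$
By Jensen, $\int_{\partial\D}u\le 4\pi\log(\frac1{2\pi}\int_{\partial\D}e^{u/2})$. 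Since $\tilde h_\e\le1$, we can estimate $2\tilde h_\e\int_{\partial\D}u$ by $2\int_{\partial\D}u$ plus a correction $2(1-\tilde h_\e)\int_{\partial\D}u$, and this correction is bounded above on $M$ by a constant depending on $\delta$. Lemma~\ref{lem:lebedev_milin} gives $\frac12\int|\nabla u|^2+2\int_{\partial\D}u\ge 8\pi\log\int_{\partial\D}e^{u/2}=8\pi\log\delta$. Therefore $\mathcal I_\e|_M\ge 8\pi\log\delta-C(\delta)=\overline C$, uniformly for $\e\in(0,\e_0)$, which proves (i).

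\emph{The endpoints.} Take $u_0=-L$ constant with $L$ large. Then $\int_{\partial\D}e^{u_0/2}<\delta$, and $\mathcal I_\e(u_0)\approx -4\pi\tilde h_\e L+O(\e L)$, which tends to $-\infty$ as $L\to\infty$ uniformly in small $\e>0$. For $u_1$ we use the hypothesis $h(x_0)>\sqrt{|K(x_0)|}$. We take the conformal factor of a hyperbolic-type bubble concentrating at $x_0$:
$$u_\lambda(x)=2\log\frac{2\lambda\,c}{\lambda^2|x-x_0^\lambda|^2-\ldots},$$
i.e.\ the solution of the half-plane problem $-\Delta u=2K(x_0)e^u$, $\partial_\nu u=2h(x_0)e^{u/2}$, transplanted to the disk. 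These solutions exist exactly when $h(x_0)^2+K(x_0)>0$. Along this family the boundary length tends to $+\infty$. The standard computation of \cite{LopezSorianoMalchiodiRuiz2019} gives $\mathcal I(u_\lambda)\to-\infty$ like $-c\log\lambda$. The positive perturbation $\e\mathcal T(u_\lambda)$ is $O(\e\log\lambda)$, because the Dirichlet energy grows like $\log\lambda$ and the area stays bounded. So for $\e_0$ small and $\lambda$ large, $\mathcal I_\e(u_1)<\overline C$ for all $\e\in(0,\e_0)$. Choosing $L$ large also gives $\mathcal I_\e(u_0)<\overline C$, which is (ii).

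The main obstacle is this last energy expansion. We must check that the leading coefficient of $\log\lambda$ in $\mathcal I(u_\lambda)$ is strictly negative precisely when $h(x_0)>\sqrt{|K(x_0)|}$. We must also check that the variation of $K,h$ near $x_0$ and the $\e$-terms only contribute $o(\log\lambda)+O(\e\log\lambda)$. The rest is routine.
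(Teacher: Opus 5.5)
Your set $M$, the lower bound (i), the crossing property (iii) and the choice $u_0=-L$ are all fine. For (i) the paper is shorter: $\mathcal T\ge 0$ gives $\mathcal I_\e\ge \mathcal I/(1+\e)$, and Lebedev--Milin does the rest. Your absorption of the $\tilde K_\e$ term also works; just mind the sign, $2\tilde h_\e\int_{\partial\D}u=2\int_{\partial\D}u-2(1-\tilde h_\e)\int_{\partial\D}u$.

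The gap is $u_1$, exactly the step you flag, and the check you propose would fail. Take exact bubbles of the frozen problem $(K(x_0),h(x_0))$, transplanted to the disk and concentrating at $x_0$. The coefficient of $\log\lambda$ in $\mathcal I(u_\lambda)$ is zero, not negative. The Dirichlet energy $\approx 8\pi\log\lambda$ is cancelled by $2\int_{\partial\D}u_\lambda\approx 2\cdot 2\pi\cdot(-2\log\lambda)$; on a general surface the coefficient is $8\pi(1-\chi)$, and $\chi(\D)=1$. This is the M\"obius invariance of the constant-coefficient functional that the paper uses in the linking construction, $\mathcal J_{\mathfrak h}(\Psi(\cdot,\phi,\lambda))=\mathcal J_{\mathfrak h}(\Psi(\cdot,\phi,0))$.

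Area and length are M\"obius invariant too. So the length does not tend to $+\infty$, and the mismatch terms $-2\int_\D(K-K(x_0))e^{u_\lambda}-4\int_{\partial\D}(h-h(x_0))e^{u_\lambda/2}$ tend to $0$. Hence $\mathcal I(u_\lambda)$ stays bounded, and nothing places it below $\inf_M\mathcal I_\e$. With your small $\delta$ that infimum is at most about $8\pi\log\delta$, as constants on $M$ show.

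Worse, $\mathcal T(u_\lambda)\approx 10\pi\log\lambda$ (Dirichlet energy plus $-\int_\D u_\lambda$). So for fixed $\e>0$ you get $\mathcal I_\e(u_\lambda)\to+\infty$. Concentration is penalized when $\e>0$: this is the term $\alpha_\e\log(1-\lambda^2)$ in \eqref{eq:Bound_B}, favourable only for $\e<0$.

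What is needed (Lemma \ref{lem: bubble}, from the appendix of \cite{LopezSorianoMalchiodiRuiz2019}) is a family with \emph{diverging} length and area. Its energy goes to $-\infty$ through the mismatch with a comparison constant-coefficient problem, not through concentration. One way to do this:
\begin{enumerate}
\item Using $h(x_0)>\sqrt{|K(x_0)|}$, pick a constant $K_0<K(x_0)$ with $\sqrt{|K_0|}<h(x_0)$.
\item Take bubbles of the constant problem $(K_0,\mathfrak h)$ with $\mathfrak h\in(\sqrt{|K_0|},h(x_0))$ close to $\sqrt{|K_0|}$ (the horocycle limit, $\phi\to1$). Their constant-coefficient energy stays bounded while length and area blow up.
\item Then $-2\int_\D(K-K_0)e^{u}-4\int_{\partial\D}(h-\mathfrak h)e^{u/2}\lesssim -c\bigl(\int_\D e^u+\int_{\partial\D}e^{u/2}\bigr)$ once the mass sits near $x_0$.
\end{enumerate}
The order of choices matters. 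Fix $\mathfrak h$ first. Then concentrate only enough to localize the mass near $x_0$, since concentration costs $\e$-energy. Only after $u_1$ is fixed, choose $\e_0$ so that $\e\,\mathcal T(u_1)$ is negligible for all $\e\in(0,\e_0)$. Your ordering, with concentration as the driving parameter and $\e$ absorbed by a $\log\lambda$ gain, cannot work because that gain does not exist.
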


	The proof requires two preliminary results:
\begin{lemma}
The functional $\mathcal{I}_\varepsilon$ is bounded from below on the set
$$
M = \left\{ u \in H^1(\mathbb{D}) :\int_{\partial\D}e^{u/2}= 1 \right\}.
$$
\end{lemma}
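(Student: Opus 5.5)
On $M$ the boundary exponential term is fixed, so the plan is to bound every remaining term of $\mathcal{I}_\varepsilon$ from below by a combination of the Lebedev--Milin inequality (Lemma \ref{lem:lebedev_milin}) and the inequality $e^s \ge 1+s$.

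For $u\in M$ we have $\int_{\partial\D}e^{u/2}=1$. Hence the boundary exponential term equals
\[
-4\int_{\partial\D}h_\e e^{u/2}\ \ge\ -4\max|h_\e|\ \ge\ -C.
\]
Since $K<0$ on $\overline{\D}$ and $\e>0$, we have $K_\e<0$, so $-2K_\e e^u\ge c_0 e^u$ for some $c_0>0$. Moreover $\tilde K_\e=-\frac{\e}{2(1+\e)}<0$ and $\tilde h_\e=1-\frac{\e}{1+\e}\in(0,1)$. Therefore
\[
\mathcal{I}_\e(u)\ \ge\ \frac12\int_\D|\nabla u|^2+\int_\D\big(c_0e^u-\tfrac{\e}{1+\e}u\big)+2\tilde h_\e\int_{\partial\D}u-C.
\]

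The interior linear term is harmless because of the interior exponential. Indeed, $c_0e^u-\delta u\ge -C_\delta$ pointwise, since $e^s\ge 1+s$, so $\int_\D(c_0e^u-\delta u)\ge -C$.

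The key step, and the only delicate one, is the boundary term $2\tilde h_\e\int_{\partial\D}u$, which could be very negative. Lemma \ref{lem:lebedev_milin} with $\int_{\partial\D}e^{u/2}=1$ gives
\[
0\le \int_\D|\nabla u|^2+4\int_{\partial\D}u,
\qquad\text{that is,}\qquad
2\int_{\partial\D}u\ge -\tfrac12\int_\D|\nabla u|^2.
\]
Since $0<\tilde h_\e<1$, we split into two cases. If $\int_{\partial\D}u\ge0$, the boundary term is nonnegative and can be dropped. Otherwise,
\[
2\tilde h_\e\int_{\partial\D}u\ \ge\ 2\int_{\partial\D}u\ \ge\ -\tfrac12\int_\D|\nabla u|^2.
\]
In both cases the gradient term absorbs the boundary term, since $\frac12-\frac12=0$. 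This yields $\mathcal{I}_\e\ge -C$ on $M$, with $C$ uniform for $\e\in(0,\e_0)$.

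The main point to check is this last absorption. It works exactly at the critical constant provided by Lebedev--Milin, and it relies on $\tilde h_\e\le1$, which is precisely where the sign $\e>0$ is used.
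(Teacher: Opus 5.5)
Your proof is correct and is essentially the paper's argument. The paper uses the same three ingredients: Lebedev--Milin with $\int_{\partial\D}e^{u/2}=1$ absorbs the boundary mean into the Dirichlet energy, the sign of $K$ disposes of the interior exponential, and the normalization bounds the boundary exponential term. The paper simply organizes the $\varepsilon$-dependent pieces more economically: it notes that $\mathcal{T}\ge 0$, so $\mathcal{I}_\varepsilon\ge \mathcal{I}/(1+\varepsilon)$, and then applies Lebedev--Milin to $\mathcal{I}$ with the exact constant. This makes your case split on the sign of $\int_{\partial\D}u$ and your pointwise bound on $c_0e^u-\delta u$ unnecessary.
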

\begin{proof}
For a fixed $\varepsilon > 0$, we have that $\mathcal{I}_\varepsilon(u) \geq \frac{1}{1+\e} \mathcal{I}(u)$ for all $u \in H^1(\D)$, then it is sufficient to show that $\mathcal{I}(u)$ is bounded from below on the set $M$.

Let $u \in M$. We bound the functional by analyzing its components.
First, since $K<0$, the term $-2\int_{\D} K(x)e^u$ is positive. Moreover, setting $
{\mathcal{H}=\max_{\partial\D}h(x)}$, we have:
$$ \int_{\partial\D} h(x) e^{u/2} \leq \mathcal{H} \int_{\partial\D} e^{u/2} = \mathcal{H}.$$ 
Finally, using Lemma \ref{lem:lebedev_milin}, we deduce:
$$\mathcal{I}(u)=\int_{\D} \left( \frac{1}{2}|\nabla u|^2  - 2K(x)e^u \right) + \int_{\partial\D} \left( 2u - 4he^{u/2} \right)\geq -4\mathcal{H}.$$

\end{proof}
    \begin{lemma}\label{lem: bubble}
	 For $\e>0$ sufficiently small, there exists a sequence $u_n\in H^{1}(\mathbb{D})$ such that as $n \to \infty$, 
        $$
\mathcal{I}_\varepsilon(u_n) \longrightarrow -\infty \quad \text{and} \quad \int_{\partial\D} e^{u_n/2}\longrightarrow +\infty.
$$
	\end{lemma}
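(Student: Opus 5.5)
The plan is to build the sequence $u_n$ explicitly as truncated boundary bubbles concentrating at the point $x_0\in\partial\D$ given in Theorem \ref{prop:ConvergenceSubsequence}, where $h(x_0)>\sqrt{|K(x_0)|}$. First I would fix the constants $K_0:=K(x_0)<0$ and $h_0:=h(x_0)$. Since $h_0^2+K_0>0$, the half-plane Liouville problem
$$-\Delta U=2K_0e^U \ \text{ in } \R^2_+,\qquad \partial_\nu U=2h_0e^{U/2}\ \text{ on }\partial\R^2_+$$
has the explicit radial solutions
$$U_\delta(y)=2\log\frac{2\delta\,\mu}{\delta^2\mu^2\ldots+|y-y_\delta|^2\,K_0\ldots},$$
meaning the hyperbolic-type profiles classified in \cite{LiZhu1995, Zhang2003, GalvezMira2009}. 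These are centered at a point $y_\delta$ placed at distance $\sim\delta$ outside the boundary; the hypothesis $h_0>\sqrt{|K_0|}$ is exactly what makes this family exist with finite area and length. I would transplant $U_\delta$ to $\D$ near $x_0$ via a conformal chart, glue it to its logarithmic tail $-4\log|x-x_0|$ by a cutoff at a fixed small radius, and finally add a free constant $t$. This gives a two-parameter family $u_{\delta,t}$.

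The second step is the energy expansion. The exponential terms are essentially scale invariant. Using the $\mathcal C^2$ regularity of $K,h$, I expect
$$\int_\D K_\e e^{u_{\delta,t}}=e^{t}\bigl(A\,K_\e(x_0)+o(1)\bigr),\qquad \int_{\partial\D}h_\e e^{u_{\delta,t}/2}=e^{t/2}\bigl(L\,h_\e(x_0)+o(1)\bigr)$$
as $\delta\to0$. Here $A$ and $L$ are the area and length of the model bubble, related through Gauss--Bonnet by $K_0A+h_0L=2\pi$. The Dirichlet and linear terms contribute
$$\frac12\int_\D|\nabla u|^2+2\tilde h_\e\int_{\partial\D}u+2\tilde K_\e\int_\D u=\bigl(8\pi-8\pi\tilde h_\e\bigr)\log\tfrac1\delta+4\pi\tilde h_\e\, t+O(1).$$
This is where $\e$ enters: at $\e=0$ the logarithmic coefficient vanishes (Lebedev--Milin criticality, Lemma \ref{lem:lebedev_milin}), while for $\e\neq0$ it equals $8\pi\e/(1+\e)\log(1/\delta)$. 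Because of this, the free constant $t$ must be coupled to $\delta$ to produce divergence.

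Third, I would choose $t=t(\delta)$ and let $\delta\to0$ along a suitable path so that $\int_{\partial\D}e^{u/2}\sim Le^{t/2}\to+\infty$ while the total energy tends to $-\infty$. The competition is between three contributions: the exponential part $2|K_\e(x_0)|A e^t-4h_\e(x_0)Le^{t/2}$; the term $4\pi\tilde h_\e t$; and the logarithmic term in $\delta$. The inequality $h_0>\sqrt{|K_0|}$ should make the combined scale-invariant quantity negative in the direction where the length grows. I would also use the freedom in the bubble parameter $\mu$ (ratio of length to area) to make the boundary term dominate, e.g.\ taking profiles whose length $L\to\infty$ faster than $\sqrt{A}$ times $\sqrt{|K_0|}/h_0$. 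All remainder terms must be uniform once $\e$ is fixed small.

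The main obstacle is exactly this bookkeeping of signs. One must check that, with $\e>0$ small, the choice of $(\delta,t,\mu)$ makes the negative boundary term $-4h_\e\int e^{u/2}$ beat the positive area term $-2K_\e\int e^u$ together with the Dirichlet/linear contribution. This is the step where $h(x_0)>\sqrt{|K(x_0)|}$ is used quantitatively, and it is the step I would verify first by an explicit computation in the half-plane model (constant $K_0,h_0$), before controlling the errors coming from the variation of $K$ and $h$ near $x_0$ and from the cutoff.
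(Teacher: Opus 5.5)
Your construction, as specified, cannot produce the sequence. Your profile solves the model problem with exactly $K_0=K(x_0)$, $h_0=h(x_0)$. Solutions with given constant curvatures form a single conformal orbit, so its area $A$ and length $L$ are fixed numbers, and your only free parameters are $\delta$ and $t$.

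Along this family the boundary length is $\approx Le^{t/2}$, which diverges only if $t\to+\infty$. But then $2|K_\e(x_0)|Ae^{t}-4h_\e(x_0)Le^{t/2}\to+\infty$, because $K<0$ makes the area term grow like $e^{t}$ while the boundary term grows only like $e^{t/2}$. The concentration term does not help either. For $\e>0$ its coefficient is positive (your $\frac{8\pi\e}{1+\e}\log\frac1\delta$, and the omitted $\tilde K_\e$ term adds a further positive multiple of $\e\log\frac1\delta$), so it is a cost, not a source of divergence to $-\infty$. Hence on the $(\delta,t)$ family $\mathcal I_\e\to-\infty$ only when $t\to-\infty$, i.e. when the length tends to $0$. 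That is the direction of the constants $-n$ in the mountain-pass construction, the opposite of what the lemma requires, and no coupling $t=t(\delta)$ changes this.

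The missing idea is that the divergence must come from degenerating the profile itself. You need bubbles whose geodesic curvature $\mathfrak h$ is strictly below the true one and tends to the critical value (hyperbolic disks of radius $\to\infty$, approaching a horocycle), concentrated near $x_0$ at a \emph{fixed} small scale. Concretely:
\begin{itemize}
\item add a constant to normalize $K_\e(x_0)=-1$, which turns $h_\e(x_0)$ into $\mathfrak D_\e(x_0)=h_\e(x_0)/\sqrt{|K_\e(x_0)|}$;
\item take $\Psi(\cdot,\phi,\lambda)$ from Section~\ref{sec:Existence-result}, which solves the model problem with $K\equiv-1$ and $\mathfrak h_\phi=\frac12(\phi+\phi^{-1})$;
\item keep $\lambda$ fixed near $1$ and let $\phi\downarrow1$, so that $A_\phi\approx L_\phi\to\infty$.
\end{itemize}

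Your parameter $\mu$ gestures at this, but changing $L/A$ means leaving the model equation, and then your bookkeeping breaks. The Dirichlet-plus-linear part is no longer $O(1)$: by Gauss--Bonnet, $\frac12\int_\D|\nabla\Psi|^2+2\int_{\partial\D}\Psi=\mathcal J_{\mathfrak h_\phi}(\Psi)+2A_\phi+8\pi\approx 2L_\phi$, which is of the same order as the exponential terms. A symptom of the error: with your accounting, $L^2/A\to\infty$ would give $\inf_t\mathcal I_\e\approx-2h_0^2L^2/(|K_0|A)\to-\infty$ without ever using $h(x_0)>\sqrt{|K(x_0)|}$.

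The correct count uses $\mathcal J_{\mathfrak h_\phi}(\Psi)=-8\pi(1+\log\frac\phi2)$, which stays bounded, and extracts the gain from the mismatch $-4\int_{\partial\D}(h_\e-\mathfrak h_\phi)e^{\Psi/2}$. It gives
\[
\mathcal I_\e\le -4\bigl(\mathfrak D_\e(x_0)-\mathfrak h_\phi\bigr)L_\phi+\omega(\delta)L_\phi+C(\delta).
\]
Here $\omega(\delta)\to0$ as $\delta=1-\lambda\to0$ accounts for the variation of $K_\e,h_\e$ near $x_0$, and $C(\delta)$ contains the fixed concentration cost.

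So smallness of $\e$ is needed precisely to keep $\mathfrak D_\e(x_0)>1$. You then fix $\delta$ with $\omega(\delta)<2(\mathfrak D_\e(x_0)-1)$ and let $\phi\to1$; the energy goes to $-\infty$ while the length $\sim L_\phi\to\infty$. This is the kind of test-function estimate from the Appendix of \cite{LopezSorianoMalchiodiRuiz2019} that the paper invokes.
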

	
	\begin{proof}
		The proof immediately follows from the estimates given in the Appendix of \cite{LopezSorianoMalchiodiRuiz2019}.
	\end{proof}

    \begin{proof}[Proof of Proposition \ref{prop: Mountain-pass}]
    First, let us evaluate the functional on the constant functions $u = -n$, with $n \in \mathbb{N}$:
    \begin{equation*}
    \begin{split}
    \mathcal{I}_\varepsilon(-n) 
    &= - \int_{\mathbb{D}} 2 \tilde{K}_\varepsilon n - \int_{\mathbb{D}} 2 K_\varepsilon(x) e^{-n} - \int_{\partial\D} 2 \tilde{h}_\varepsilon n - \int_{\partial\D} 4 h_\varepsilon(x) e^{-n/2} \\
    & = -2 n \left( \tilde{K}_\varepsilon \pi + \tilde{h}_\varepsilon 2\pi \right)+o(1) \to -\infty \quad \text{as } n \to +\infty, 
    \end{split}
    \end{equation*}
since $ \tilde{K}_\varepsilon \pi + \tilde{h}_\varepsilon 2\pi= \frac{\pi}{1+\varepsilon} \left(2-\frac{\varepsilon}{2}\right)  >0$ by \eqref{eq:perturbations} with $\varepsilon$ sufficiently small.
    \medskip
    
    Then, choose $u_0 = -n$ such that
    $$
   \int_{\partial\D}e^{u_0/2}< 1 \quad \text{and} \quad \mathcal{I}_\varepsilon(u_0) < \inf_{M} \mathcal{I}_\varepsilon.
    $$
    
    By Lemma \ref{lem: bubble}, there also exists $u_1$ such that
    $$
   \int_{\partial\D}e^{u_1/2}> 1 \quad \text{and} \quad \mathcal{I}_\varepsilon(u_1) < \inf_{M} \mathcal{I}_\varepsilon.
    $$
    
    Observe that for any path $\eta\in\Gamma$, by continuity, there exists $t \in (0,1)$ such that
    $$
    \int_{\partial\D}e^{\eta(t)/2}= 1.
    $$
    
    As a consequence,
    \begin{equation*}
    c =\inf_{\eta \in \Gamma} \max_{t \in [0,1]} \mathcal{I}_\varepsilon(\eta(t)) \geq \inf_{M} \mathcal{I}_\varepsilon > \max\{\mathcal{I}_\varepsilon(u_0), \mathcal{I}_\varepsilon(u_1)\}.
    \end{equation*}
    \end{proof}

\subsection{Higher dimensional linking geometry for $\boldsymbol{\varepsilon<0}$}
In this section, we establish a three-dimensional linking geometry for the $\mathcal{I}_\varepsilon$ for negative values of $\e$.

\begin{proposition}\label{prop:linking}
    Under the assumption of Theorem \ref{prop:ConvergenceSubsequence2}, there exists $\delta_0>0$ such that, for any $\delta \in (0, \delta_0)$, there is a set $M\subset H^1(\mathbb{D})$, a continuous map $\L: \partial B \to H^1(\D)$, where $B$ is the unit ball in $\R^3$, and a class of admissible maps:
    $$ \Gamma =\left\{ \eta \in C^0(\overline{B}; H^1(\D)) : \eta|_{\mathbb{S}^2} = \L \right\}, $$ 
    satisfying the following properties for any $\e \in (-\delta, -\delta/2)$:
    \begin{enumerate}
        \item[\emph{(i)}] $\mathcal{I}_\varepsilon|_M\geq\overline{C}$.
        \item[\emph{(ii)}] $\max \mathcal{I}_\varepsilon|_{\L(\partial B)}<\inf_M\mathcal{I}_\varepsilon$.
        \item[\emph{(iii)}] For all $\eta\in\Gamma$, $\eta(B)\cap M\neq\emptyset$.
    \end{enumerate}

    The min-max level $c$ is then defined by: $$c_\e=\inf_{\eta\in\Gamma}\max_{x\in B}\mathcal{I}_\varepsilon\left(\eta(x)\right)\geq\inf_M\mathcal{I}_\varepsilon>
    \max \mathcal{I}_\varepsilon|_{\L(\partial B)}.$$

\end{proposition}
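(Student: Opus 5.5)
Since $\e<0$, the linear terms now carry $\tilde K_\e>0$ and $\tilde h_\e-1>0$. The constants $-n$ no longer form a low sublevel: the coefficient $\tilde K_\e\pi+2\pi\tilde h_\e$ is still positive, but concentrated bubbles become more expensive. I would therefore exploit the Chen–Li inequality (Lemma \ref{lem:chen-li}) with $l=2$ and use a two-sided boundary concentration barrier.

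\emph{Choice of $M$.} I would let $M$ be the set of $u$ whose boundary length $\int_{\partial\D}e^{u/2}$ is a fixed large number $L$ and whose normalized boundary measure $e^{u/2}/\int_{\partial\D} e^{u/2}$ is "spread", i.e. not concentrated near a single boundary point. A quantitative way to say this is that its center of mass $\beta(u)=\int_{\partial\D}x\,e^{u/2}/\int_{\partial\D}e^{u/2}\in\overline{\D}$ has modulus at most $1-\sigma$ for suitable $\sigma$. On such $u$, a covering argument gives two separated arcs each carrying a fraction $\gamma$ of the mass. Lemma \ref{lem:chen-li} with $l=2$, combined with Jensen/Poincaré to pass from $\int_\D u$ to the boundary terms and the a priori bound $\int_\D e^u\ge$ something from the interior term, should yield $\mathcal I_\e\ge \overline C$ on $M$. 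The key point is that the doubled constant $32\pi$ beats the $16\pi$ coefficient of the boundary term. I expect some care is needed because the term $2\tilde K_\e\int u$ has the wrong sign. This is where I would use the comparison with the constant-coefficient problem ($K=-k$, $h=\mathfrak h$ constants with $\mathfrak h>\sqrt k$), for which everything is explicit.

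\emph{The map $\L$.} I would parametrize $\overline B\ni(s,\theta)$ via conformal bubbles. Using the Möbius maps $\varphi_p$ of the disk with $p\in\D$, set $u_p=2\log|\varphi_p'|+c$, possibly adjusted by the explicit solutions of the constant case. As $p\to\partial\D$, $e^{u_p/2}$ concentrates at $p/|p|$. These produce a 2-dimensional family (a disk $\overline{\D}\cong$ hemisphere). To close it into $\partial B\cong\mathbb S^2$, I add a third parameter: a "depth"/constant shift $t$, sending $u\mapsto u-t$ with $t$ large on the other hemisphere, where the energy is very negative since $\mathcal I_\e(-n)\to-\infty$. The boundary of the set of concentrated bubbles is likewise very negative when $p$ approaches $\partial\D$ (Lemma \ref{lem: bubble}-type estimates, with hypothesis (i) $h>\sqrt{|K|}$ everywhere guaranteeing every boundary bubble has energy $\to-\infty$). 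Hypothesis (i) is needed uniformly in the concentration point precisely because the whole circle of concentration points belongs to $\partial B$. Choosing parameters so that $\max\mathcal I_\e|_{\L(\partial B)}$ is below $\overline C$ gives (ii). Here the restriction $\e\in(-\delta,-\delta/2)$ lets the negative levels be chosen uniformly.

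\emph{Linking (iii).} For $\eta\in\Gamma$, I consider the map $\Psi(x)=\bigl(\int_{\partial\D}e^{\eta(x)/2}-L,\ \beta(\eta(x))\bigr)$ to $\R\times\R^2$, with $\beta$ suitably cut off. On $\partial B$, $\Psi$ avoids $\{0\}\times\{|\beta|\le 1-\sigma\}$ and has degree one around it. Indeed, the low-constant hemisphere has length less than $L$, while the bubble hemisphere has $\beta$ near the boundary point, equivariantly in $\theta$. By degree theory $\Psi$ hits that set in $B$, i.e. $\eta(B)\cap M\neq\emptyset$.

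The main obstacle is (i): proving the lower bound on $M$ despite $\e<0$. Obtaining this uniformly for $\e\in(-\delta,-\delta/2)$, together with making the energy on $\L(\partial B)$ lower than that bound, will require the explicit constant-coefficient comparison. The degree computation is routine once $\L$ is built.
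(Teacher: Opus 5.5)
There is a genuine gap in (ii), the construction of $\Lambda$.

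\textbf{Why your sphere is not low.} In your sphere, the hemisphere where the length exceeds $L$ is the disk of bubbles $\{u_p\}$. A nonzero degree of $(A-L,\beta)\circ\Lambda$ forces the image to meet the ray $\{A>L,\ \beta=0\}$. So this disk must contain a spread function with length above $L$. For $u_p=2\log|\varphi_p'|+c$ that function is $u_0\equiv c$, whose barycenter is exactly $0$. Its energy is an $O(1)$ quantity, bounded below independently of $c$: increasing $c$ only sends $-2\int_\D K_\varepsilon e^{c}$ to $+\infty$. Using constant-case bubbles with a fixed parameter does not change this. Since $\inf_M\mathcal{I}_\varepsilon$ is not known explicitly, (ii) has to be obtained by pushing the whole sphere below an arbitrary level. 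Your own Chen--Li argument shows that any $u$ with $A(u)=L'\ge 1$ and barycenter $0$ satisfies $\mathcal{I}_\varepsilon(u)\ge -CL'-C$. Hence the sphere must contain spread functions of arbitrarily large length and very negative energy. You only check the concentrated part $|p|\to1$ and the shifted cap, and nothing in the proposal produces such functions.

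\textbf{The missing idea.} Compare $\mathcal{I}_\varepsilon$ with the conformally invariant constant-curvature functional $\mathcal{J}_{\mathfrak h}$ ($K_0=-1$, $h=\mathfrak h$) along its bubbles $\Psi(\cdot,\phi,\lambda)$. Then let $\phi\downarrow 1$, so that length and area blow up like $(\phi^2-1)^{-1}$. First normalize so that $h>1>|K|$. The strict gaps $h_\varepsilon>\mathfrak h_0+r$ and $K_\varepsilon+1>r$ near $\partial\D$ are where $\min h>\max\sqrt{|K|}$ is really used. Since $\mathcal{J}_{\mathfrak h}(\Psi)=-8\pi(1+\log\frac{\phi}{2})$ stays bounded, these gaps give
\[
\mathcal{I}_\varepsilon(\Psi)-\mathcal{J}_{\mathfrak h}(\Psi)\le 8\pi(\tilde h_\varepsilon-1)\log(1-\lambda^2)-\frac{8\pi r}{\phi^2-1}+O\Bigl(\log\frac{1}{\phi^2-1}\Bigr).
\]
The first term makes concentrated bubbles low, since $\tilde h_\varepsilon-1$ is bounded below by a positive constant depending only on $\delta$ (because $\varepsilon<-\delta/2$). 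The second term makes huge-mass bubbles low, including the radial one. The sphere is then swept by the paths
\[
\text{low constant}\to\text{concentrated bubble at }p\to\text{concentrated bubble with }\phi=1+\sigma\to\text{radial bubble with }\phi=1+\sigma .
\]
Its top pole has huge length and zero barycenter.

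\textbf{Where the hypothesis actually enters.} You attribute the low energy of concentrated boundary bubbles to hypothesis (i), but that is not where it is needed. For $\varepsilon<0$, concentrated bubbles at any boundary point have energy $\to-\infty$ through the $(\tilde h_\varepsilon-1)\int_{\partial\D}u$ term, regardless of $h$ and $K$. The hypothesis is needed only for the spread, large-mass part of the sphere.

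\textbf{On (i).} No comparison argument is needed there. With $A(u)=1$ and $B(u)=0$, Chen--Li with $l=2$ gives
\[
\int_\D u\ge -C-\frac{1+\delta}{16}\int_\D|\nabla u|^2 .
\]
Since $\tilde K_\varepsilon+2\tilde h_\varepsilon\approx 2$, the resulting coefficient $\frac12-\frac{(\tilde K_\varepsilon+2\tilde h_\varepsilon)(1+\delta)}{8}$ of $\int_\D|\nabla u|^2$ stays positive. So the sign of $\tilde K_\varepsilon$ is harmless.
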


\begin{remark} As the proof will show, the set $M$ is independent of $\delta$ but the map $\Lambda$ does depend on $\delta$ (not on $\e \in (-\delta, -\delta/2)$).
\end{remark}

\begin{proof}
	
First of all, observe that by addition of a constant $v = u+c$, we pass from problem \eqref{eq:Problem} to an analogous one with rescaled curvatures $\overline{K}_\varepsilon = K_\varepsilon e^{-c}$ and $\overline{h}_\varepsilon = h_\varepsilon e^{-c/2}$. By choosing $c$ conveniently we can assume, without loss of generality, that $ h(x) > 1 \mbox{ and } K(x) \in (-1, 0).$ By taking $\delta_0$ sufficiently small we can find $\mathfrak{h}_0>1$, $r>0$ and $l\in(0,1)$ fixed such that:
\begin{equation} \label{farfrom1}
    h_\e(x) > \mathfrak{h}_0 +r \quad \forall x \in \partial \D, \qquad \  K_\e(x) + 1 >r \  \quad \mbox{ for }|x| > 1-l. 
\end{equation} 
    The proof is divided into four steps.
    \medskip
    \noindent

\textbf{Step 1: Definition of $M$ and proof of (i).}
We first show that there exists a subset $M$ on which the functional $\mathcal{I}_\varepsilon$ is bounded from below. This set is defined via constraints on the boundary mass and barycenter:
$$ M = \left\{ u \in H^1(\D) : A(u)=1,\;B(u) = (0,0)\right\}, $$
where
\begin{equation}\label{eq:operators}
    A(u) =\int_{\partial\D} e^{u/2}, \quad
B(u) =\left( \int_{\partial\D} x_1\,e^{u/2}, \int_{\partial\D} x_2\,e^{u/2} \right).    
\end{equation}

Clearly the functional $\mathcal{I}_\varepsilon$ satisfies the basic inequality:
$$ \mathcal{I}_\varepsilon(u) \geq \frac{1}{2}\int_\D|\nabla u|^2 + 2\tilde{K}_\varepsilon\int_\D u + 2\tilde{h}_\varepsilon\int_{\partial\D} u + C,\quad\text{for all }u\in M. $$
We can rewrite this expression as:
\begin{equation} \label{otra} \mathcal{I}_\varepsilon(u) \geq \frac{1}{2}\int_\D|\nabla u|^2 + 2(\tilde{K}_\varepsilon + 2\tilde{h}_\varepsilon)\int_{\D} u + 4\pi\tilde{h}_\varepsilon\left[\frac{1}{2\pi}\int_{\partial\D} u - \frac{1}{\pi}\int_{\D} u\right] + C. \end{equation}
The difference between the boundary mean and the interior mean can be controlled by a Poincar\'{e}-type inequality:
$$ \left| 4\pi\tilde{h}_\varepsilon\left[\frac{1}{2\pi}\int_{\partial\D} u - \frac{1}{\pi}\int_{\D} u\right] \right| \leq C \left(\int_\D|\nabla u|^2\right)^{1/2}. $$
We now employ the Chen-Li type inequality. For any $u \in M$, the barycenter condition $B(u)=(0,0)$ implies that the assumption of Lemma \ref{lem:chen-li} holds with at least $l=2$ regions. Then,
$$ 0 \leq (1+\delta)\int_{\D}|\nabla u|^2 + 16\int_{\D} u + C \Longrightarrow \int_{\D} u \ge -C - \frac{1+\delta}{16}\int_{\D}|\nabla u|^2. $$
Plugging these inequalities into \eqref{otra} we obtain:
\begin{equation*}
    \begin{split}
        \mathcal{I}_\varepsilon(u) &\geq \left(\frac{1}{2}- \frac{(\tilde{K}_\varepsilon+2\tilde{h}_\varepsilon)(1+\delta)}{8}\right)\int_{\D}|\nabla u|^2 - C \left(\int_\D|\nabla u|^2\right)^{1/2} + C\\
        &\geq\frac{1}{5}\int_\D|\nabla u|^2+C.
    \end{split}
\end{equation*}

\medskip 

\textbf{Step 2: Definition of the map $\L$.} Let $\mathbb{S}^2$ be the unit sphere in $\mathbb{R}^3$. We parameterize a point $y \in \mathbb{S}^2$ using cylindrical coordinates $(p, t)$ where $p \in \mathbb{S}^1$ is the angular coordinate and $t \in [-1,1]$. The parametrization is given by
    $$ y = ( t,\sqrt{1-t^2} p_1, \sqrt{1-t^2} p_2), \quad \text{where } p=(p_1, p_2). $$ 

Then the map $\L: \mathbb{S}^2 \to H^1(\D)$ is defined as $$ \L(y) =\L(p, t) = \gamma_p(\tau(t)), $$ where $\tau(t) = (t+1)/2$ is a linear rescaling from $t \in [-1, 1]$ to $\tau \in [0,1]$. For each $p\in\mathbb{S}^1$, $\gamma_p$ is a path in $H^1(\D)$ connecting two fixed functions, $u_{0}$ and $u_{3}$, via two intermediate functions, $u_{1,p}$ and $u_{2,p}$, which depend on the direction $p$. The path $\gamma_p$ is then defined as the concatenation of three curves (as shown in Figure \ref{fig:placeholder}) structured to connect four test functions $u_0$,  $u_{1,p}$, $u_{2,p}$ and $u_3$.

$$
\gamma_p(\tau) =\left\{
    \begin{array}{ll}
        \gamma_{1, p}(3\tau) & \text{for } \tau \in [0, 1/3], \\
        \gamma_{2, p}(3\tau-1) & \text{for } \tau \in [1/3, 2/3], \\
        \gamma_{3, p}(3\tau-2) & \text{for } \tau \in [2/3, 1].
    \end{array}
\right.
$$

\begin{figure}[h!]
    \centering
    \includegraphics[width=0.8\linewidth]{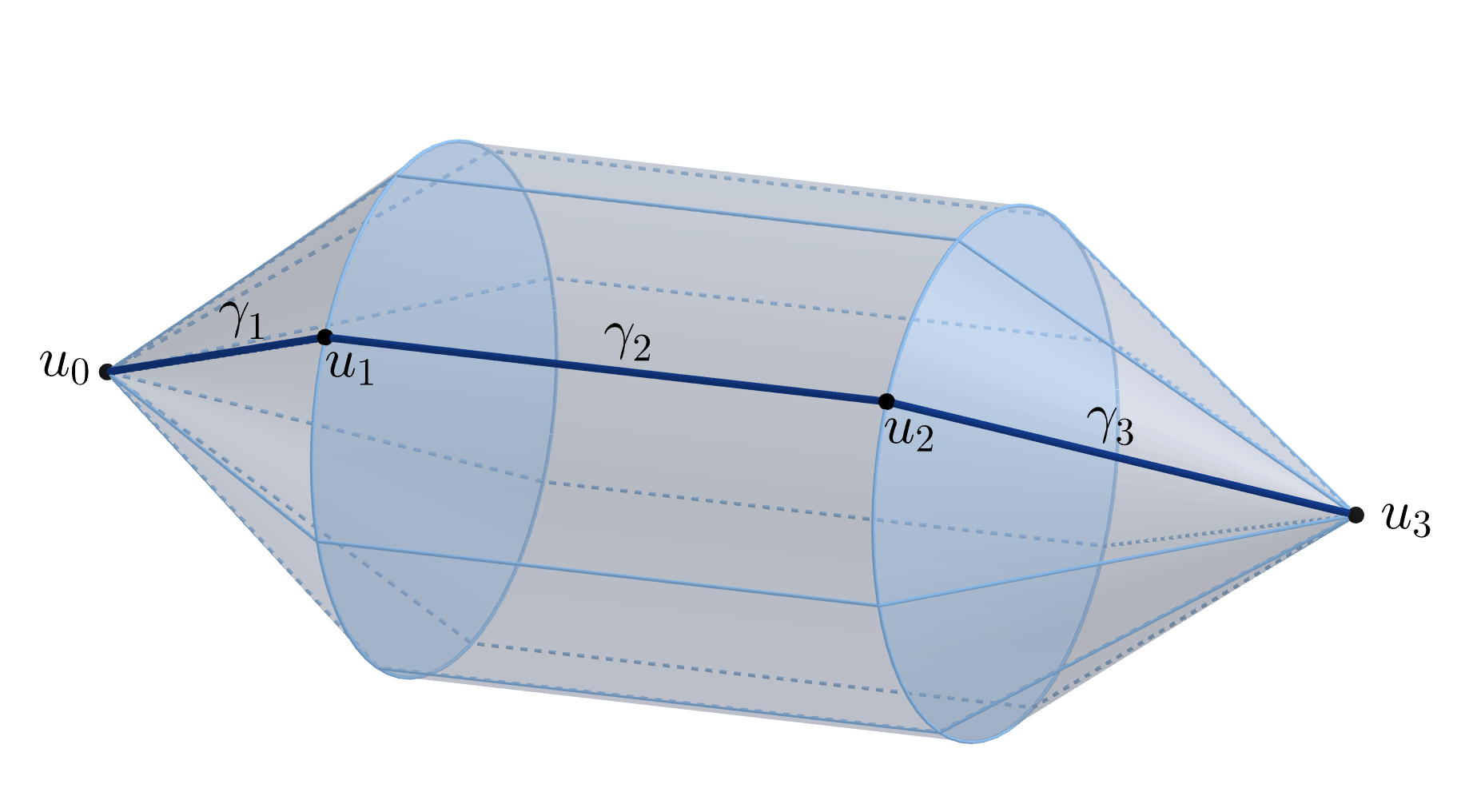}
    \caption{The path $\gamma=\gamma_1+\gamma_2+\gamma_3$ for $p\in\mathbb{S}^1$.}
    \label{fig:placeholder}
\end{figure}

The map $\gamma_p$ is designed to cover regions where the functional $\mathcal{I}_\varepsilon$ takes low values. The whole construction depends on a small parameter $\sigma > 0$, which depends on $\delta$ but not on $\e$. 

We will make use of a family of test functions derived from an auxiliary problem. This problem consists in prescribing constant Gaussian curvature $K_0=-1$ and constant geodesic curvature $\mathfrak{h}$, chosen such that $1 < \mathfrak{h} < \mathfrak{h}_0$. The associated functional is:
\begin{equation}\label{eq:auxiliary_energy}
    \mathcal{J}_\mathfrak{h}(u)=\int_{\D}\left( \frac{1}{2}|\nabla u|^2+2e^u\right) + \int_{\partial\D} \left( 2u-4\mathfrak{h}e^{u/2}\right).
\end{equation}
The Euler-Lagrange equation for $\mathcal{J}_\mathfrak{h}$ is:
\begin{equation*}\label{eq:auxiliary_problem}
	\left\{
	\begin{array}{ll}
		-\Delta u=-2e^{u}&\text{in }\mathbb{D},\\
		\frac{\partial u}{\partial \nu}+2=2\mathfrak{h}e^{u/2}&\text{on }\partial\mathbb{D}.\\
	\end{array}\right.
\end{equation*}

The solutions to this problem form an explicit family, the so-called bubble functions $u_a$, indexed by a point $a\in\D$:
\begin{equation*}\label{eq:bubblefamily}
    u_{a}(x) =2 \log \left( \frac{2 \phi (1 - |a|^2)}{\phi^2 |1 - \overline{a} x|^2 - |x - a|^2} \right), \quad a \in \D,
\end{equation*}
where $\phi =\mathfrak{h} + \sqrt{\mathfrak{h}^2 - 1} > 1$. 

Given $ p \in \mathbb{S}^1$ we will take $a = \lambda p$, $\lambda \in [0,1)$. Observe that, as $\lambda$ tends to 1, the function tends to concentrate around the point $p$. For the sake of clarity we will define the path for the fixed direction $p=(1,0)$, so that $a=(\lambda,0)$. In this case, $u_a(x)$ takes the explicit form:
$$\Psi(x,\phi,\lambda)= 2\log\frac{2(1-\lambda^2)\phi}{\phi^2(1 - 2\lambda x_1 + \lambda^2|x|^2) - (|x|^2 - 2\lambda x_1 + \lambda^2)},$$
where $|x|^2 = x_1^2+x_2^2$. The four test functions are defined as follows:
\begin{equation*}
    \begin{split}
        u_0&=3\log\sigma;\\
        u_1(x)&=\Psi(x,\phi_0,1-\sigma);\\
        u_2(x)&=\Psi(x,1+\sigma,1-\sigma);\\
        u_3(x)&=\Psi(x,1+\sigma,0),
    \end{split}
\end{equation*}
where $\sigma>0$ is a parameter to be chosen and $\phi_0= \mathfrak{h}_0 + \sqrt{\mathfrak{h}_0^2 - 1}>1$.

The first path $\gamma_1$ is simply the segment $$\gamma_1(t) = (1-t)u_0 + tu_1 \quad\text{ for }t\in[0,1].$$ The second curve connects $u_1$ to $u_2$ by continuously varying the parameter $\phi$, while keeping $\lambda = 1-\sigma$ fixed. This path is defined by the transformation
$$\gamma_2(t):=\Psi\big(\cdot , \phi(t), 1-\sigma\big), \quad \text{ where }\,\phi(t)=(1-t)\phi_0+t(1+\sigma), \quad t\in[0,1].$$

The last curve $\gamma_3$ connects $u_2$ to the radial function $u_3$ by varying the concentration parameter $\lambda$ while keeping $\phi = 1+\sigma$ fixed:
$$\gamma_3(t) =\Psi\big(\cdot, 1+\sigma, \lambda(t)\big), \quad \lambda(t) = (1-t)(1-\sigma), \quad t\in[0,1].$$

As will be shown in Step 3, the energy $\mathcal{I}_\varepsilon$ takes low values along the entire path $\gamma=\gamma_1+\gamma_2+\gamma_3$ if $\sigma$ is sufficiently small, depending on $\delta$. 

\medskip 
    \textbf{Step 3: Proof of (ii).} Our objective now is to prove that $\mathcal{I}_\varepsilon$ takes arbitrarily low values on $\L(\partial B)$ if $\sigma>0$ is chosen appropriately. To see this, we take advantage of the conformal invariance of the functional \eqref{eq:auxiliary_energy}. In particular,  
$$\mathcal{J}_{\mathfrak{h}}\left(\Psi(\cdot,\phi,\lambda)\right) = \mathcal{J}_{\mathfrak{h}}\left(\Psi(\cdot,\phi,0)\right)= -8\pi\left(1 + \log\frac{\phi}{2}\right),$$
where the last identity has been computed in \cite[Remark 5.4]{JevnikarLopezSorianoMedinaRuiz2022}.

This motivates us to decompose the perturbed functional $\mathcal{I}_\varepsilon$ as:
$$\mathcal{I}_\varepsilon(u)=\mathcal{J}_\mathfrak{h}(u) + \mathcal{B}_{\varepsilon,\mathfrak{h}}(u),\quad \text{for all }u \in H^1(\D),$$
where $\mathcal{B}_{\varepsilon,\mathfrak{h}}(u)$ is given by:
$$\mathcal{B}_{\varepsilon,\mathfrak{h}}(u)=2\tilde{K}_\varepsilon\int_\D u - 2\int_\D \left(K_\varepsilon(x)+1\right)e^u + 2\left(\tilde{h}_\varepsilon-1\right)\int_{\partial\D} u - 4\int_{\partial\D}\left(h_\varepsilon(x)-\mathfrak{h}\right)e^{u/2}.$$

We now give estimates on  $\mathcal{B}_{\varepsilon,\mathfrak{h}}\left(\Psi(\cdot,\phi,\lambda)\right)$. First, due to the invariance of both exponential terms with respect to $\lambda$, we can apply the estimates established in \cite[Remark 5.4]{JevnikarLopezSorianoMedinaRuiz2022} as:
\begin{equation}\label{eq:estimates_exponential_bubble}
   \int_{\partial\D}e^{\Psi(x,\phi,\lambda)/2}=\int_{\partial\D}e^{\Psi(x,\phi,0)/2}=\frac{2\pi}{\sqrt{\mathfrak{h}^2-1}}, 
\end{equation} and
$$\int_{\D}e^{\Psi(x,\phi,\lambda)}=\int_{\D}e^{\Psi(x,\phi,0)}=2\pi\left(\frac{\mathfrak{h}}{\sqrt{\mathfrak{h}^2-1}}-1\right).$$ 
Using this last identity, combined with condition \eqref{farfrom1}, we can estimate:
\begin{equation} \label{eq:estimates_exponential_bubble_with_K}
    \begin{split}
        \int_\mathbb{D} (K_\varepsilon(x)+1)e^{\Psi} &= \int_{1-l <|x|\leq1} (K_\varepsilon(x)+1)e^{\Psi} + \int_{|x| \leq 1-l} (K_\varepsilon(x)+1)e^{\Psi} \\
        &\geq r\int_{1-l < |x|\leq1}e^{\Psi} + O(1) = r  \int_{\mathbb{D}}e^{\Psi} + O(1) \\
        &= 2\pi r\left(\frac{\mathfrak{h}}{\sqrt{\mathfrak{h}^2-1}}-1\right) + O(1).
    \end{split}
\end{equation}
In the derivation above, the term $O(1)$ refers to the integrals over the set $\{|x| \leq 1-l\}$. Since $\Psi(\cdot,\phi,\lambda)$ remains bounded from above on such set (uniformly on $\phi$, $\lambda$), these contributions are bounded by a constant.

Moreover, by the Jensen's inequality, we can get the following bound:
\begin{equation}\label{eq:jensen}
   \frac{1}{\pi}\int_\D \Psi(x,\phi,\lambda) \leq \, \log\left(\frac{1}{\pi} \int_\D e^{\Psi(x,\phi,\lambda)}\right)=\log\left(2\left(\frac{\mathfrak{h}}{\sqrt{\mathfrak{h}^2-1}}-1\right)\right).
\end{equation}

On the other hand:
\begin{equation}\label{eq:linearboundaryphi}
    \begin{split}
        2\int_{\partial\D}\Psi(x,\phi,\lambda)&=4\int_{\partial\D}\log\frac{2(1-\lambda^2)\phi}{\phi^2\left[1-2\lambda x_1+\lambda^2\right]-\left[1-2\lambda x_1+\lambda^2\right]}\\
        &=8\pi\log\frac{2(1-\lambda^2)\phi}{\phi^2-1}-4\int_{\partial\D}\log(1-2\lambda x_1+\lambda^2)\\
        &=8\pi\log\frac{2(1-\lambda^2)\phi}{\phi^2-1},
    \end{split}
\end{equation}
since $\int_{\partial\D}\log(1-2\lambda x_1+\lambda^2)=0$.

By \eqref{eq:estimates_exponential_bubble}, \eqref{eq:estimates_exponential_bubble_with_K}, \eqref{eq:jensen}, \eqref{eq:linearboundaryphi} and the relations $\frac{\mathfrak{h}}{\sqrt{\mathfrak{h}^2-1}}-1 = \frac{2}{\phi^2-1}$ and $\frac{1}{\sqrt{\mathfrak{h}^2-1}} = \frac{2\phi}{\phi^2-1}$, we get the following estimate:
\begin{equation}
\begin{split}\label{eq:Bound_B}
    \mathcal{B}_{\varepsilon,\mathfrak{h}}(\Psi(x,\phi,\lambda)) & \leq \alpha_\varepsilon\log(1-\lambda^2)-\frac{\beta}{\phi^2-1}+ O\left(\log\left(\frac{1}{\phi^2-1}\right)\right) \\ & \leq \alpha \log(1-\lambda^2)-\frac{\beta}{\phi^2-1}+ O\left(\log\left(\frac{1}{\phi^2-1}\right)\right)
\end{split}
\end{equation}
where 
\begin{align*} %\label{eq:Constant_C_epsilon_phi}
    \alpha_\varepsilon &=8\pi (\tilde{h}_\varepsilon - 1) \geq \alpha=\frac{8 \pi \delta}{2 + \delta};\\
     \beta&=8 \pi r.
\end{align*}
Observe that $\alpha$ and $\beta$ are independent of $\e$.

We are now ready to estimate $\mathcal{I}_\varepsilon|$ on ${\L(\partial B)}$.

\medskip
\noindent
\textit{- The Segment $\gamma_1$.} It can be easily checked that $u_1 \geq u_0$. Recalling that $\gamma_1(t) = (1-t)u_0 + tu_1$, we can estimate
\begin{equation*}
    \begin{split}
        \mathcal{I}_\varepsilon(\gamma_1)&\leq \int_{\D} \left( \frac{1}{2}|\nabla u_1|^2+2\tilde{K}_\varepsilon u_1\right)
        + \int_{\partial\D} 2\tilde{h}_\varepsilon u_1 -  \int_{\D}2K_\varepsilon(x)e^{u_1} - \int_{\partial\D}4h_\varepsilon(x)e^{u_0/2} \\
        &\leq \mathcal{J}_\mathfrak{h}(u_1)+\mathcal{B}_{\varepsilon,\mathfrak{h}}(u_1)+C,
    \end{split}
\end{equation*}
where we have used that $\int_{\partial\D}4h_\varepsilon(x)e^{u_1/2}$ is bounded. 

Then we can apply \eqref{eq:Bound_B} to get 
$$\mathcal{I}_\varepsilon(\gamma_1)\leq  \alpha \log(1-(1-\sigma)^2)-\frac{\beta}{\phi_0^2-1}+ O\left(1 \right)$$
We can conclude by taking $\sigma$ sufficiently small (depending only on $\delta$ and $\phi_0$).

\medskip
\noindent
\textit{- The second curve $\gamma_2$.} Recall that in this curve we use test functions $\Psi(\cdot, \phi, \lambda)$ with $\lambda=1-\sigma$ and $\phi$ taking values from $\phi_0$ to $(1+\sigma)$. We analyze the energy $\mathcal{I}_\varepsilon(\gamma_2(t))$ using the decomposition $\mathcal{I}_\varepsilon= \mathcal{J}_{\mathfrak{h}} + \mathcal{B}_{\varepsilon,\mathfrak{h}}$.
The energy associated with the auxiliary problem is $$\mathcal{J}_{\mathfrak{h}}(\gamma_2(t)) = -8\pi(1 + \log(\phi(t)/2)).$$ Since $\phi(t)$ is continuous and remains within the compact interval $[1+\sigma, \phi_0]$, the term $\mathcal{J}_{\mathfrak{h}}(\gamma_2(t))$ is uniformly bounded for $t \in [0,1]$.

For the perturbation term $\mathcal{B}_{\varepsilon,\mathfrak{h}}$, we apply the upper bound \eqref{eq:Bound_B}:
$$ \mathcal{B}_{\varepsilon,\mathfrak{h}}(\gamma_2(t)) \leq \alpha\log(1 - (1-\sigma)^2)-\frac{\beta}{\phi^2(t)-1}+  O\left(\log\left(\frac{1}{\phi(t)^2-1}\right) \right).$$

We conclude again by taking $\sigma$ sufficiently small.

\medskip
\noindent
\textit{- The third curve $\gamma_3$.} Observe that now $\phi=1+\sigma$ and $\lambda$ takes values between $1-\sigma$ and $0$. Reasoning as above and applying again\eqref{eq:Bound_B}, we obtain:

\begin{equation*}
    \mathcal{B}_{\varepsilon,\mathfrak{h}}(\gamma_3(t)) \leq \alpha \log(1 - \lambda(t)^2) 
    - \frac{\beta}{(1+\sigma)^2-1} 
    +  O\left(\log\left(\frac{1}{(1+\sigma)^2-1}\right) \right),
\end{equation*}
so that we can conclude if $\sigma$ is chosen appropriately.

\medskip
\noindent
\textit{- Conclusion.} The analysis above shows that, given $\delta \in (0, \delta_0)$ and any real number $R$, we can choose $\sigma>0$ small enough so that:
$$
\sup_{\tau}\mathcal{I}_\varepsilon\left(\gamma_p(\tau)\right) < R,
$$ for any $\e \in (- \delta, -\delta/2)$. Since $p \in \mathbb{S}^1$ can be chosen arbitrarily, we conclude the proof of (ii).

\medskip 

\textbf{Step 4: Proof of (iii).}
The final step is the verification of the topological linking condition. We employ a degree theory argument. Let us define the map $\chi: H^1(\D) \to \mathbb{R}^3$ by
\begin{equation*}\label{eq_chi}
   \chi(u) =\left(A(u) - 1, B_1(u), B_2(u)\right),  
\end{equation*}
where $B(u)$ is the barycenter vector defined as in \eqref{eq:operators}. 

By definition, $u \in M \iff \chi(u) = 0$. The linking condition is therefore proved if we show $\deg(\chi \circ \eta, B, 0) \neq 0$. By (ii), $\chi(\L(y)) \neq 0$ for all $y \in \mathbb{S}^2$, so the Brouwer degree $\deg(\chi \circ \L, \mathbb{S}^2, 0)$ is well defined. We claim that this degree is equal to 1.

To prove this, we show that $\chi \circ \L$ is homotopic to the identity map. Consider the standard linear homotopy:
$$H: \mathbb{S}^2 \times [0,1] \to \mathbb{R}^3,\quad H(y, s) =(1-s)\chi(\L(y)) + s y.$$
We show $H(y, s) \neq 0$ for all $(y, s) \in \mathbb{S}^2 \times [0,1]$. The cases $s=0$ and $s=1$ are clear. We proceed by contradiction, assuming $H(y_0, s_0) = 0$ for some $y_0 \in \mathbb{S}^2$ and $s_0 \in (0,1)$. This implies
\begin{equation}\label{eq:invpropor}
    \chi(\L(y_0)) = -\rho \, y_0,
\end{equation} 
for some $\rho = s_0/(1-s_0) > 0$. We distinguish two cases.

\textit{- Case 1:} We consider the poles $y_0=( \pm 1,0,0)$. The function $\L(y_0)$ is either $u_0$ or $u_3$, in both cases $B(u_0)=B(u_3)=(0,0)$ because they are radially symmetric. Furthermore:
\begin{equation*}
\begin{split}
    A(u_0) &= \int_{\partial\D} e^{u_0/2}= 2\pi\sqrt{\sigma^3}\to0,\quad \text{as } \sigma \to 0,\\
    A(u_3) &= \int_{\partial\D} e^{u_3/2} = 2\pi \frac{2(1+\sigma)}{(1+\sigma)^2-1}\to +\infty, \quad \text{as } \sigma \to 0.
\end{split}
\end{equation*}

This implies that $\chi(\L(y_0)) = (c, 0, 0)$ with $c = A(\L(y_0))-1$. In both cases we get a contradiction because $\rho>0$ and $(c,0,0)$ is proportional to $y_0$.

\textit{- Case 2:} We now consider $y_0 = \gamma(\tau_0)$ different from a pole, which can be described via cylindrical coordinates by $(p_0, \tau_0) \in \mathbb{S}^1 \times (0,1)$. We claim that $B\left(\gamma_{p}(\tau)\right)=\kappa p$ for all $p\in\mathbb{S}^1$, $ \tau \in (0,1)$, and for some choice of $\kappa>0$. By rotational invariance, it suffices to analyze the case $p=(1,0)$.

Observe that the component $B_2 = \int_{\partial\D} x_2 e^{\gamma(\tau_0)/2} ds$ vanishes by odd symmetry. In order to estimate $B_1 = \int_{\partial\D} x_1 e^{\gamma(\tau_0)/2} ds$, we split the integral over the right and left half-circles, $\partial\D^+ = \{x\in\partial\D:x_1>0\}$ and $\partial\D^- = \{x\in\partial\D:x_1<0\}$. Applying the change of variables $\tilde{x}= (-x_1, x_2)$ to the integral over $\partial\D^-$, we can write:
$$
B_1 = \int_{\partial\D^+} x_1 \left[ e^{\gamma(\tau_0)(x))/2}- e^{\gamma(\tau_0)(\tilde{x})/2}\right].
$$
We claim that $\gamma(\tau_0)(x) > \gamma(\tau_0)(\tilde{x})$ for $x \in  \partial\D^+$. Note that, for $x\in\partial\D$ we have the strict monotonicity in $x_1$:
\begin{equation*}
    \begin{split}
        \Psi\left(x,\phi,\lambda\right)&=2\log\frac{2(1-\lambda^2)\phi}{(\phi^2-1)(1 - 2\lambda x_1 + \lambda^2) }\\
        &> 2\log\frac{2(1-\lambda^2)\phi}{(\phi^2-1)(1 + 2\lambda x_1 + \lambda^2) }\\
        &=\Psi\left(\tilde{x},\phi,\lambda\right) \quad \text{if } x_1 > 0 
    \end{split}
\end{equation*}
Hence an analogous inequality holds for $u_1$, $\gamma_2(t)$, and $\gamma_3(t)$. It is also inherited by the convex combination $\gamma_1(t) = (1-t)u_0 + tu_1$, since $u_0$ is constant. Thus, for any $\tau_0 \in (0,1)$, $\gamma(\tau_0)(x) > \gamma(\tau_0)(\tilde{x})$, which implies $B_1>0$.

Coming back to equality \eqref{eq:invpropor}, we compare the barycenter components:
$$
\kappa \, p_0 = -\hat{\rho} \, p_0,
$$
but this is a contradiction, as $\kappa > 0$, $\hat{\rho} > 0$.

\medskip

Since the assumption $H(y_0, s_0) = 0$ leads to a contradiction in all cases, $H$ is a valid homotopy and then $\deg(\chi \circ \L, \mathbb{S}^2, 0) = \deg(\Id, \mathbb{S}^2, 0) = 1$. As a consequence $\deg(\chi \circ \eta, \mathbb{S}^2, 0) = 1$ for any $\eta \in \Gamma$, which guarantees the existence of a point $y_0 \in \overline{B}$ such that $\chi(\eta(y_0)) = 0$. This implies $\eta(y_0) \in M$, completing the proof of (iii).
\end{proof}

\subsection{Existence of a Critical Value}

In this subsection we complete the proofs of Theorems \ref{prop:ConvergenceSubsequence} and \ref{prop:ConvergenceSubsequence2}.

\begin{proof}[Proof of Theorem \ref{prop:ConvergenceSubsequence}]
Let $\varepsilon_0>0$ be as in Proposition \ref{prop: Mountain-pass}. We recall the definition of the perturbed functionals $\mathcal{I}_\varepsilon : H^1(\D) \to \R$ for  $\e \in (0, \e_0)$:
$$ \mathcal{I}_\varepsilon(u) = \frac{\mathcal{I}(u) + \varepsilon \, \mathcal{T}(u)}{1+\varepsilon}, \ \mbox{where }\  \mathcal{T}(u) = \int_{\mathbb{D}} \left( \frac{1}{2}|\nabla u|^2 + e^u - u \right).$$ As established in the previous sections, this functional $\mathcal{I}_\varepsilon$ possesses the required min-max geometry (either mountain-pass or higher dimensional linking), the nature of which depends on the sign of $\varepsilon$. 

Let $c_\varepsilon$ denote the corresponding min-max level for $\mathcal{I}_\varepsilon$. The term $\mathcal{T}$ is coercive and strictly positive, and the derivatives $\mathcal{I}'_\varepsilon$, $\mathcal{I}''_\varepsilon$ are uniformly Hölder continuous on bounded sets. We may therefore apply to $\mathcal{I} + \varepsilon \mathcal{T}$ the monotonicity trick of Struwe (\cite{Struwe1985, Jeanjean1999}). In our case, we make use of the abstract results from \cite[Proposition 4.2]{BellazziniRuiz23} or \cite[Theorem 1]{BorthwickChangJeanjeanSoave-p}, which allow us to keep control on the Morse index of the solutions. We obtain the existence of a bounded Palais–Smale sequence $\{v_k^{\varepsilon}\}$ for $\mathcal{I}_\varepsilon$ at the level $c_\varepsilon$ for almost every $\varepsilon\in (0, \e_0)$. Then, up to a subsequence, we may assume that $v_k^{\varepsilon} \rightharpoonup u_\varepsilon$ weakly in $H^1(\mathbb{D})$. By standard regularity arguments, $u_\e$ is a classical solution of the perturbed problem \eqref{eq:perturbed_problem_general} with the coefficients given in \eqref{eq:perturbations}. In particular, one can take a sequence $\e_n>0$ converging to $0$ with that property. 

Furthermore, the solutions obtained via these min-max procedures have a Morse index controlled by the dimension of the underlying geometric structure. In this case, since we are using a mountain-pass argument,
$$ \mathrm{ind}(u_{\varepsilon}) \leq 1. $$
\end{proof}

\begin{proof}[Proof of Theorem \ref{prop:ConvergenceSubsequence2}]

We reason analogously, with only one technical detail. Let $\delta_0$ as given by using Proposition \ref{prop:linking}, and fix $\delta \in (0, \delta_0)$. We consider again the family of functionals $\mathcal{I}_\varepsilon$, with $\e \in (-\delta, -\delta/2)$. Taking into account the linking structure proved in Proposition \ref{prop:linking} (which is independent of $\e$, although it depends on $\delta$), we can argue as above to find $u_\e$ solutions to \eqref{eq:perturbed_problem_general} for almost every $\e \in (-\delta, -\delta/2)$. Since $\delta>0$ is arbitrary, we obtain the desired sequence of solutions.

Since our min-max scheme is three-dimensional in this case, we obtain the Morse index bound:
$$ \mathrm{ind}(u_{\varepsilon}) \leq 3. $$

\end{proof}

%%%%%%%%%%%%%%%%%%%%%%%%%%%%%%%%%%%%%%%%%%%%%%%%%%%%%%%%%%%%%%%%%%%%%%%%%%%%%%%%%%%%%%%%%%%%%%%%%%%%%%%%%%%%%%%%%%%%%%%%%%%%%%%%%%%%%%%%%%%%%%%%%%%%%%%%%%%%%%%%%%%%%%%%%%%%%%%%%%%%%%%%%%%%%%%%%%%%%%%% Section 4 %%%%%%%%%%%%%%%%%%%%%%%%%% %%%%%%%%%%%%%%%%%%%%%%%%%%%%%%%%%%%%%%%%%%%%%%%%%%%%%%%%%%%%%%%%%%%%%%%%%%%%%%%%%%%%%%%%%%%%%%%%%%%%%%%%%%%%%%%%%%%%%%%%%%%%%%%%%%%%%%%%%%%%%%%%%%%%%%%%%%%%%%%%%%%%%%%%%%%%%%
	
\section{Blow-up analysis} \label{sec:Blow-upAnalysis}

In this section, we analyze the properties of blow-up solutions to the perturbed problem
\begin{equation}\label{ecua-compact}
    \begin{cases}
        - \Delta u_n + 2 \tilde{K}_n = 2 K_n(x) e^{u_n} & \text{in } \mathbb{D}, \\
        \frac{\partial u_n}{\partial \nu} + 2 \tilde{h}_n = 2 h_n(x) e^{u_n/2} & \text{on } \partial \mathbb{D},
    \end{cases}
\end{equation}
where the coefficients depend on a sequence $\varepsilon_n \to 0$ according to
\begin{equation}\label{eq:perturbations_n}
    \tilde{K}_{n} =-\frac{\varepsilon_n}{2(1+\varepsilon_n)}, \quad 
    K_{n}(x) =\frac{K(x) - \varepsilon_n/2}{1+\varepsilon_n}, \quad 
    \tilde{h}_{n} =\frac{1}{1+\varepsilon_n}, \quad 
    h_{n}(x) =\frac{h(x)}{1+\varepsilon_n}.
\end{equation}

Most of the related literature on this kind of problems imposes a uniform bounded mass condition in the form: 

\begin{equation}\label{eq:bounded_mass}
	\int_{\mathbb{D}} e^{u_n} + \int_{\partial \mathbb{D}} e^{u_n/2} < C,
\end{equation}
see for instance \cite{BattagliaLS2020,JevnikarLopezSorianoMedinaRuiz2022}. However, in general this condition need not be  satisfied, and the blow-up analysis available in literature is much less complete without \eqref{eq:bounded_mass}. However, more is known if the solutions have bounded Morse index, as happens in our setting.

Following the classical approach, we define the singular set $S$ by
$$
    S =\left\{ x \in \overline{\mathbb{D}} : \exists\, x_n \to x \text{ such that } u_n(x_n) \mbox{ is unbounded from above} \right\}.
$$

The next theorem encompasses the blow-up analysis of \cite{LopezSorianoMalchiodiRuiz2019} (which treats the case of a general surface, and possibly unbounded mass) and \cite{JevnikarLopezSorianoMedinaRuiz2022} (which is concerned with the case of the disk and the localization of the blow-up point). 

\begin{theorem}[Theorem 1.1 of \cite{JevnikarLopezSorianoMedinaRuiz2022} and Theorem 1.4 of \cite{LopezSorianoMalchiodiRuiz2019}]\label{thm: Blowup}
    Assume \eqref{eq:assumption_H}, and let $u_n$ be a sequence of solutions to \eqref{ecua-compact}, where the coefficients are given by \eqref{eq:perturbations_n}, such that $\sup u_n \to +\infty$. Assume also that the Morse index of such solutions is bounded (see \eqref{eq:morseindex}), i.e., $ind(u_n) \leq m$ for some $m \in \mathbb{N}$. 
    
    Then, passing to a subsequence if necessary, we have that 
    the singular set $S$ lies on the boundary $\partial \mathbb{D}$ and admits the decomposition $S = S_0 \cup S_1$, where:
    \begin{equation*}
        \begin{split}
            S_0 &\subset \left\{ x \in \partial \mathbb{D} : \mathfrak{D}(x) = 1, \quad \partial_\tau\mathfrak{D}(x) = 0 \right\},\\
            S_1 &= \{ p_1, \ldots, p_j \} \subset \left\{ x \in \partial \mathbb{D} : \mathfrak{D}(x) > 1 \right\}, \quad \text{with } j \leq m.
        \end{split}
    \end{equation*}
    Moreover, if $S_0 =\emptyset$, then \eqref{eq:bounded_mass} is satisfied, and $S=S_1= \{p\}$, for some $p \in \partial \D$. In this case, there exists a sequence $a_n \in \D$, $a_n \to p$ such that:
        $$
            u_n(x) = u_{a_n}(x) + \psi_n(x),\quad
            u_{a_n}(x) =2 \log \left( \frac{2 \hat{\phi}_n (1 - |a_n|^2)}{\hat{\phi}_n^2 |1 - \overline{a}_n x|^2 + \hat{k}_n |x - a_n|^2} \right),
        $$
        where the error term $\psi_n$ is uniformly bounded. The coefficients above are given by:
        \begin{equation}\label{eq:def_hat_phi_and_k}
            \hat{\phi}_n =\phi_n\left(\frac{a_n}{|a_n|}\right), \quad \hat{k}_n =K_n\left(\frac{a_n}{|a_n|}\right), \quad \hat{h}_n =h_n\left(\frac{a_n}{|a_n|}\right),
        \end{equation}
        with the auxiliary function defined as:
        \begin{equation*}
            \phi_n(x) =h_n(x) + \sqrt{h_n^2(x) + K_n(x)}.
        \end{equation*}
\end{theorem}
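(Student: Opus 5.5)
The plan is to reduce this statement to the two cited results, checking that the perturbation \eqref{eq:perturbations_n} does not spoil any of their ingredients. The coefficients satisfy $K_n\to K$ and $h_n\to h$ in $\mathcal{C}^2$, $K_n<0$ uniformly, and $\tilde K_n\to 0$, $\tilde h_n\to 1$ are constants. Hence \eqref{ecua-compact} is a small, regular perturbation of the geometric problem. The linear terms $2\tilde K_n$ and $2\tilde h_n$ are bounded zero-order forcing terms, and they are invisible under the blow-up rescaling $v_n(y)=u_n(x_n+\mu_n y)+2\log\mu_n$.

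\textbf{First step: the structure of $S$.} This follows \cite[Theorem 1.4]{LopezSorianoMalchiodiRuiz2019}.
\begin{itemize}
\item Interior blow-up is excluded because $K_n<0$. A local maximum principle argument, or Brezis--Merle combined with the fact that $-\Delta u_n\le C$ near interior points, gives local upper bounds in $\D$.
\item At boundary points, one rescales around maxima. The limit profiles solve the half-plane Liouville problem with curvatures $K(p)$ and $h(p)$. By the classification in \cite{GalvezMira2009,Zhang2003}, finite-energy solutions exist only when $\mathfrak{D}(p)>1$. When $\mathfrak{D}(p)\le 1$ the only possible profiles are 1D-type solutions with infinite mass.
\item The Morse index bound controls the number of such bubbles. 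Each bubble carries a negative direction of $\mathcal{I}_{\e_n}''(u_n)$, obtained by cutting off the kernel or negative eigenfunctions of the linearized limit profile. Since the second variation of the extra terms $\varepsilon_n\mathcal{T}$ is $O(\e_n)$ relative to the bubble scale, we get $j\le m$.
\item The unbounded-mass profiles force $\mathfrak{D}(p)=1$. A tangential Pohozaev identity (Lemma \ref{lem:pohosaev} with $F$ tangent) then yields $\partial_\tau\mathfrak{D}(p)=0$, giving $S_0$.
\end{itemize}
Throughout, I would verify that every error term generated by $\tilde K_n$ and $\tilde h_n-1$ is $o(1)$ after scaling.

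\textbf{Second step: the case $S_0=\emptyset$.} Away from $S_1$ the $u_n$ are locally bounded above. Near each $p_i$ the mass is quantized by the finite-energy profiles. Together these give \eqref{eq:bounded_mass}. With bounded mass I would then follow \cite[Theorem 1.1]{JevnikarLopezSorianoMedinaRuiz2022}:
\begin{itemize}
\item Integrating \eqref{ecua-compact} gives the perturbed Gauss--Bonnet identity
$$\int_\D K_ne^{u_n}+\int_{\partial\D}h_ne^{u_n/2}=\pi\tilde K_n+2\pi\tilde h_n\to2\pi.$$
\item Each boundary bubble carries total curvature $2\pi$ in the limit, while the regular part's contribution becomes negligible. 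This forces exactly one blow-up point, $S=\{p\}$.
\item The profile $u_{a_n}$ is chosen by matching the local maxima and scale, with $\hat\phi_n$ and $\hat k_n$ frozen at $a_n/|a_n|$.
\end{itemize}

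\textbf{Main obstacle.} I expect the main obstacle to be the uniform bound on $\psi_n=u_n-u_{a_n}$. $\psi_n$ solves a linear problem whose right-hand side combines three contributions: the difference of the nonlinearities, the variation of $K_n$ and $h_n$ from the frozen values, and the constants $\tilde K_n$, $\tilde h_n-1$. I would control it via a Green representation on $\D$ with Neumann data, as in \cite{JevnikarLopezSorianoMedinaRuiz2022}. The frozen-coefficient errors are of order $|x-a_n/|a_n||\,e^{u_{a_n}}$, which is integrable uniformly, and the $\tilde K_n$, $\tilde h_n-1$ terms are bounded, so they contribute $O(1)$. I would then run a contradiction/rescaling argument to bootstrap from an $o(\log)$ bound to an $L^\infty$ bound, with nondegeneracy of the limit bubble handling the kernel directions.
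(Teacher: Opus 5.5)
Your overall route is exactly what the paper does: reduce the statement to \cite[Theorem 1.4]{LopezSorianoMalchiodiRuiz2019} and \cite[Theorem 1.1]{JevnikarLopezSorianoMedinaRuiz2022}, after noting that $\tilde K_n$ and $\tilde h_n-1$ are $O(|\e_n|)$ constants that disappear under blow-up rescaling. The paper gives no argument beyond the citation. However, your sketch of the first step misdescribes the limiting profiles, and it does so precisely where the Morse index hypothesis does its real work. For \emph{every} $p$ with $\mathfrak{D}(p)>1$, the half-plane problem $-\Delta w=2K(p)e^{w}$ in $\{t>0\}$, $-\partial_t w=2h(p)e^{w/2}$ on $\{t=0\}$ has the one-dimensional solutions
\[
e^{w(s,t)}=\frac{c^{2}}{|K(p)|\,\sinh^{2}(ct+d)},\qquad \cosh d=\mathfrak{D}(p),\quad c>0,
\]
which have infinite area and length. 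For $\mathfrak{D}(p)=1$ the 1D profile is $e^{w}=|K(p)|^{-1}(t+d)^{-2}$. For $\mathfrak{D}(p)<1$ there is no 1D solution at all; such points are excluded by the absence of limiting profiles, not by a 1D one. So your claims that ``when $\mathfrak{D}(p)\le 1$ the only possible profiles are 1D-type'' and that ``the unbounded-mass profiles force $\mathfrak{D}(p)=1$'' do not follow from the classification you invoke. That classification, in \cite{GalvezMira2009,Zhang2003}, is in any case made under finite-area assumptions. As written, nothing in your argument prevents the mass from diverging at a point with $\mathfrak{D}(p)>1$ through a $\sinh$-type profile. Your second step, deducing \eqref{eq:bounded_mass} from $S_0=\emptyset$, is therefore unsupported.

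The missing idea is that these profiles have infinite Morse index, which is exactly what $\operatorname{ind}(u_n)\le m$ forbids. The scaling kernel $z=t\,\partial_t w+2=2-2ct\coth(ct+d)$ solves the linearized equation $-z''=2K(p)e^{w}z$ and the linearized boundary condition $-z'(0)=h(p)e^{w(0)/2}z(0)$. It changes sign at some $t_0>0$. Truncating $z$ at $t_0$ gives a function on which the one-dimensional form $q(\eta)=\int_0^\infty\big(\eta'^2-2K(p)e^{w}\eta^2\big)\,dt-h(p)e^{w(0)/2}\eta(0)^2$ vanishes, yet it is not a solution because it has a corner at $t_0$. Hence $q$ takes negative values. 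Multiplying a negative direction by cut-offs in $s$ on disjoint long intervals produces arbitrarily many negative directions, and these pass to $\mathcal{I}_{\e_n}''(u_n)$ through the rescaling. For $\mathfrak{D}(p)=1$, by contrast, $z=2d/(t+d)>0$ and the profile is stable. This is why $S_0$ survives the Morse index bound and has to be removed by the hypotheses of Theorems \ref{thm:existence_mountain_pass} and \ref{thm:existence_linking}. So the index bound is not merely counting bubbles ($j\le m$). Its essential use is this exclusion, via the classification of finite-Morse-index (not finite-area) half-plane solutions in \cite{LopezSorianoMalchiodiRuiz2019}.

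Two smaller remarks. First, Brezis--Merle needs an $L^1$ bound on $e^{u_n}$ that is not available a priori here. Interior bounds should instead come from a Keller--Osserman comparison based on $\Delta u_n\ge 2|K_n|e^{u_n}-C|\e_n|$. Second, for $\psi_n$ the cited analysis (see Proposition \ref{prop:v_n_properties}) fixes $a_n$ through the barycenter condition on $v_n=u_n\circ f_{a_n}+2\log|f'_{a_n}|$ and proves $v_n$ bounded, which controls the two kernel directions exactly. Choosing $a_n$ by maxima and scale instead leaves this to a contradiction argument you have not specified.
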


The remainder of this section focuses on the bounded mass regime \eqref{eq:bounded_mass}. Our main result establishes the following necessary conditions concerning the localization of the blow-up point:

\begin{theorem}\label{thm:normal_condition}
    Under the assumptions of Theorem \ref{thm: Blowup}, suppose that $S_0= \emptyset$  and let $p \in \partial \mathbb{D}$ be the unique blow-up point. Assume also that either $\e_n>0$ or $\e_n <0$ for all $n \in \N$, Then, $ \partial_\tau \Phi(p) = 0$ and: 
    \[ \mbox{ If } \e_n>0, \mbox{ then }  \partial_\nu \Phi(p) \geq 0; \ \mbox{ instead, if }\e_n<0, \mbox{  then } \partial_\nu \Phi(p) \leq 0 \]

\end{theorem}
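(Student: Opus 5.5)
We plan to test the Kazdan--Warner identity of Proposition \ref{prop:Kazdan-Warner_ind} on the blow-up profile furnished by Theorem \ref{thm: Blowup}, after rotating so that $p=(1,0)$. Since $\tilde K_n$ and $\tilde h_n$ are constants, the reduced form of the identity applies:
\begin{equation*}
\int_{\partial\D} 4x_2\,\partial_\tau h_n\, e^{u_n/2} - \int_{\D} e^{u_n}\nabla K_n\cdot F = 2(\tilde h_n-1)\int_{\partial\D} x_1 u_n + 4\tilde K_n\int_{\D} x_1 u_n .
\end{equation*}
The perturbation appears only on the right-hand side. Its coefficients are $\tilde h_n-1\sim-\e_n$ and $\tilde K_n\sim-\e_n/2$, both with sign $-\mathrm{sign}(\e_n)$. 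The left-hand side is exactly the quantity analysed in \cite{JevnikarLopezSorianoMedinaRuiz2022}.

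\textbf{Step 1 (tangential condition).} We use the rotated vector fields $F_\theta$, which fix $p_\theta=(\cos\theta,\sin\theta)$, with $p_\theta$ chosen orthogonal-ish to $p$, namely the conformal field fixing a point different from $p$. Testing with such a field, the left-hand side concentrates at $p$ at order $O(1)$ times $\partial_\tau$ of the curvature data. The right-hand side is $O(|\e_n|\,\|u_n\|_{L^1})$. Using $u_n=u_{a_n}+\psi_n$, the $L^1$ norms of $u_n$ are $O(\log\frac{1}{1-|a_n|})$, so the right-hand side tends to $0$. Expanding the left-hand side with the explicit bubble, as in \cite{JevnikarLopezSorianoMedinaRuiz2022}, gives a multiple of $\partial_\tau\Phi(p)+o(1)$, and hence $\partial_\tau\Phi(p)=0$.

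\textbf{Step 2 (normal condition).} We test with the field $F$ fixing $p$ itself. Then $x_2$ and $F$ vanish quadratically at $p$, and the left-hand side becomes a next-order term. Following the refined expansion of \cite{JevnikarLopezSorianoMedinaRuiz2022}, and using $\partial_\tau\Phi(p)=0$, we expect it to equal $c_0(1-|a_n|)\bigl(\partial_\nu\Phi(p)+o(1)\bigr)$ with an explicit constant $c_0>0$ (possibly up to a log factor), computed from the bubble integrals of the Appendix. For the right-hand side we insert the profile. On $\partial\D$ we have $\int_{\partial\D}x_1 u_{a_n}=2\int_{\partial\D}x_1\log\frac{1}{|1-\bar a_n x|^2}+O(1)$, which is $\approx 4\pi|a_n|\cdot\frac12\cdot 2\to 2\pi$ type positive bounded quantities; likewise $\int_\D x_1 u_{a_n}$. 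Thus the right-hand side is $-\e_n(C_1+o(1))$ with $C_1>0$ explicitly computable. This is where the Appendix integrals enter.

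\textbf{Step 3 (comparison of orders and sign).} Matching the two sides yields $c_0(1-|a_n|)\partial_\nu\Phi(p)\approx -\e_n C_1$ plus error terms. If the orders are incompatible, we conclude $\partial_\nu\Phi(p)=0$ or a sign condition. To get the stated sign we must check the sign of $C_1$ carefully; with the stated conclusion ($\e_n>0\Rightarrow\partial_\nu\Phi(p)\ge0$), the combined constant must be negative. We also need the error terms, coming from $\psi_n$, from $h_n-\hat h_n$ and $K_n-\hat k_n$ being $C^2$, and from the interior part, to be $o(1-|a_n|)+o(|\e_n|)$.

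\textbf{Main obstacle.} The main obstacle is Step 2 and Step 3: obtaining the precise second-order expansion of the left-hand side together with the error control on $\psi_n$. Since $\psi_n$ is only bounded, we would first upgrade it to $\psi_n=\psi_n(p)+O(|x-p|)+O(|\e_n|)$ via the Green representation, as done in \cite{JevnikarLopezSorianoMedinaRuiz2022}. We would then verify that the perturbation terms $\tilde K_n$ and $\tilde h_n-1$ contribute to $\psi_n$ only at order $|\e_n|\log\frac{1}{1-|a_n|}$, which is still negligible relative to the main right-hand side after multiplication by $x_1$ weights.
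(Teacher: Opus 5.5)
Your skeleton matches the paper's: the Kazdan--Warner identity with constant $\tilde K_n,\tilde h_n$, a field $F$ fixing $p$, and the Appendix limits on the right-hand side. The genuine gap is the sign of $\partial_\nu\Phi(p)$, which is the actual content of the statement. You posit $c_0>0$ and $C_1>0$, which would give $\e_n>0\Rightarrow\partial_\nu\Phi(p)\le0$, the opposite of the claim. You then reverse it by appealing to the claimed conclusion, which is circular.

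The sign has to be computed, and $c_0$ is in fact negative.
\begin{itemize}
\item Boundary term. On $\partial\D$ one has $e^{u_{a_n}/2}=\frac{2\hat\phi_n}{\hat\phi_n^2+\hat k_n}\,\frac{1-a_n^2}{|1-a_nx|^2}$. So $\int_{\partial\D}x_2\,\partial_\tau h_n\,e^{u_{a_n}/2}$ equals $\frac{4\pi\hat\phi_n}{\hat\phi_n^2+\hat k_n}$ times the Poisson integral of $x_2\partial_\tau h_n$ at $a_n$. That Poisson integral is $-\frac{1-a_n^2}{2}(-\Delta)^{1/2}h(p)+o(1-a_n)$. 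For $h=x_1$ it is exactly $-\frac{1-a_n^2}{2}$, consistent with $\int_{\partial\D}x_2\partial_\tau h\,e^{u/2}=-\int_{\partial\D}x_2^2e^{u/2}<0$ in the Introduction.
\item Interior term. In complex notation $F=1-z^2$, which points towards $p$. The substitution $x=f_{a_n}(y)$ and the mean value property give exactly $\int_\D(1-x_1^2+x_2^2)e^{u_{a_n}}=(1-a_n^2)\int_\D e^{\tilde v_n}=\frac{4\pi(1-a_n^2)}{\hat\phi_n^2+\hat k_n}$. Hence $\int_\D e^{u_{a_n}}\nabla K_n\cdot F=\frac{4\pi(1-a_n^2)}{\hat\phi_n^2+\hat k_n}\partial_\nu K(p)+o(1-a_n)$.
\item Combination. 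Using $\sqrt{h^2+K}\,\partial_\nu\Phi=\Phi\,\partial_\nu H+\frac12\partial_\nu K$ and $\hat\phi_n^2+\hat k_n\to2\Phi(p)\sqrt{h(p)^2+K(p)}$, the profile contribution to your left-hand side is $-\frac{4\pi(1-a_n^2)}{\Phi(p)}\partial_\nu\Phi(p)+o(1-a_n)$.
\item Right-hand side. It equals $-\frac{10\pi\e_n}{1+\e_n}(1+o(1))$, with limits $4\pi$ and $\pi$. In fact $\int_{\partial\D}x_1u_{a_n}=4\pi a_n$ exactly, because the constant part of $u_{a_n}$ is killed by $x_1$. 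An unidentified $O(1)$ remainder, as in your Step~2, would leave even the sign of $C_1$ open.
\end{itemize}
Matching the two sides gives $\e_n\,\partial_\nu\Phi(p)\ge0$. This negative coefficient is the one in the paper's final relation in the proof of Lemma~\ref{lem:sign_relation}. The two expansions displayed just before that relation have the opposite signs, so do not rely on them.

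The second gap is error control, which the paper supplies in Proposition~\ref{prop:v_n_properties}. After pulling back by $f_{a_n}$, it splits off the correction $s_n$ produced by the concentrating sources $\tilde K_n|f'_{a_n}|^2$ and $(\tilde h_n-1)|f'_{a_n}|$. It then inverts the linearized operator modulo its kernel using the barycenter condition, obtaining $\|\psi_n\|_{C^{0,\alpha}}\le C(1-|a_n|)^{-2\alpha}(1-|a_n|+|\e_n|)$. Every remainder is then $O((1-a_n)^{1-2\alpha}(1-a_n+|\e_n|))$, the identity forces $|\e_n|\le C(1-a_n)$, and consequently $\psi_n\to0$ uniformly.

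You need $\psi_n\to0$ for two reasons. The boundary and interior leading terms carry the factors $e^{\psi_n(p)/2}$ and $e^{\psi_n(p)}$, and they combine into $\partial_\nu\Phi(p)$ only if $\psi_n(p)\to0$. The right-hand constant stays $10\pi$ only if $\int x_1\psi_n\to0$. Your plan ($\psi_n=\psi_n(p)+O(|x-p|)+O(|\e_n|)$, with a perturbative part of size $|\e_n|\log\frac{1}{1-|a_n|}$) is asserted rather than derived. It also never shows that $\psi_n$ itself tends to zero.

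Finally, in Step~1 the bound $O(|\e_n|\log\frac1{1-|a_n|})$ need not tend to zero without a relation between $\e_n$ and $1-|a_n|$. You can repair this by noting that the weights $x\cdot p_\theta$ have zero mean. Then the constant $2\log(1-|a_n|^2)$ in $u_n$ drops out and the right-hand side is $O(|\e_n|)$. Alternatively, as the paper does, apply Lemma~\ref{lem:pohosaev} with the rotation field $(-x_2,x_1)$, for which the $\tilde K_n,\tilde h_n$ terms vanish identically.
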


The proof of Theorem \ref{thm:normal_condition} requires a refined analysis of the blow-up profile. As a first step, we improve the control on the error term $\psi_n$. Although Theorem \ref{thm: Blowup} ensures that $\psi_n$ is uniformly bounded, the following proposition establishes the sharper quantitative estimate necessary for our argument.

\begin{proposition}\label{prop:v_n_properties}
    Let $\psi_n$ be the error term given in Theorem \ref{thm: Blowup}. Then, for any $\alpha \in (0, 1/2)$, the following bound holds:
    \begin{equation}\label{eq:norm_psi}
        \|\psi_n\|_{C^{0,\alpha}} \leq C (1-|a_n|)^{-2\alpha} \left( 1-|a_n| + |\varepsilon_n| \right),
    \end{equation} 
    where $C > 0$ is a constant independent of $n$.
\end{proposition}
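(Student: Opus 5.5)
We plan to write down the linear problem satisfied by $\psi_n = u_n - u_{a_n}$ and estimate its right-hand side in weighted norms. Set $\rho_n = 1-|a_n|$. The bubble $u_{a_n}$ solves exactly the problem with constant coefficients $\hat k_n$, $\hat h_n$ and with $\tilde K=0$, $\tilde h = 1$; its parameter $\hat\phi_n$ is chosen so that $\hat\phi_n = \hat h_n + \sqrt{\hat h_n^2 + \hat k_n}$. Subtracting the two problems gives
\begin{equation*}
-\Delta \psi_n = 2K_n e^{u_{a_n}}(e^{\psi_n}-1) + 2(K_n-\hat k_n)e^{u_{a_n}} - 2\tilde K_n \quad\text{in } \D,
\end{equation*}
with the analogous boundary condition
\begin{equation*}
\partial_\nu\psi_n = 2h_n e^{u_{a_n}/2}(e^{\psi_n/2}-1) + 2(h_n-\hat h_n)e^{u_{a_n}/2} + 2(1-\tilde h_n).
\end{equation*}
By \eqref{eq:perturbations_n}, the constant terms $\tilde K_n$ and $1-\tilde h_n$ are $O(|\e_n|)$.

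Next we would estimate the inhomogeneous terms. Since $|K_n(x)-\hat k_n| + |h_n(x)-\hat h_n| \le C|x - a_n/|a_n||$, and since $e^{u_{a_n}} \sim \rho_n^2/(\rho_n^2+|x-a_n|^2)^2$ and $e^{u_{a_n}/2}\sim \rho_n/(\rho_n^2+|x-a_n|^2)$, these terms are bounded by $C\rho_n|x-p_n|/(\rho_n^2+|x-p_n|^2)^2$ in the interior, and similarly on the boundary, where $p_n = a_n/|a_n|$. Their $L^1$ norms are $O(1)$ but they are concentrated at scale $\rho_n$. The natural approach is therefore to rescale by the conformal map (Möbius transformation) $T_n$ sending $0$ to $a_n$. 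Conformal invariance turns the bubble into the radial one, and the error terms become $O(\rho_n)$ pointwise (Lipschitz variation of $K,h$ composed with $T_n$, which moves points by $O(\rho_n)$ on most of the disk), plus the $O(|\e_n|)$ terms multiplied by the conformal factor $|T_n'|^2$ (respectively $|T_n'|$ on the boundary). Thus $\tilde\psi_n = \psi_n\circ T_n$ satisfies the linearized radial bubble problem with a right-hand side of size $O(\rho_n+|\e_n|)$ in suitable $L^q$ norms. The $\e_n$ terms weighted by $|T_n'|^2$ have integrable size but are not pointwise small near the antipodal point; there we will use $L^q$ bounds with $q$ close to $1$ instead.

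The key step, which we expect to be the main obstacle, is nondegeneracy. The linearized operator at the radial bubble has a kernel spanned by the three generators of the Möbius group, infinitesimal dilation and translations, together possibly with the $\hat\phi$-variation. We would impose orthogonality conditions by fixing the choice of $a_n$ (and $\hat\phi_n$) through a finite-dimensional adjustment. Since the decomposition in Theorem \ref{thm: Blowup} already gives $\psi_n$ bounded, we can re-choose $a_n$ within $O(1)$ hyperbolic distance so that these orthogonality conditions hold. Invertibility modulo the kernel then yields $\|\tilde\psi_n\|_{L^\infty}\le C(\rho_n+|\e_n|)$ once the nonlinear term $e^{\psi_n}-1-\psi_n = O(\psi_n^2)$ is absorbed; a contradiction/compactness argument with the known bounded $\psi_n$ should suffice here. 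Elliptic $W^{1,q}$ estimates give $\tilde\psi_n \in C^{0,\alpha}$ with the same bound for $\alpha<1/2$, the boundary restriction coming from $q$ near $1$ in the Neumann data.

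Finally we pull back to $\psi_n$. The map $T_n^{-1}$ has derivative of size up to $\rho_n^{-1}$ near $p_n$, so Hölder seminorms scale as $[\psi_n]_\alpha \le C\rho_n^{-\alpha}[\tilde\psi_n]_\alpha$. The statement allows the worse factor $\rho_n^{-2\alpha}$, which leaves room to account for the weights coming from the $|T_n'|$ factors in the $\e_n$ terms. This yields \eqref{eq:norm_psi}.
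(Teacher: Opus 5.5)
Your skeleton matches the paper's: pull back by $T_n=f_{a_n}$, linearize at the frozen-coefficient radial bubble, invert modulo the kernel, use $W^{1+1/q,q}\hookrightarrow C^{0,\alpha}$ with $\alpha=1-1/q$, and push forward at a cost of $\rho_n^{-\alpha}$. The gap is in the treatment of the $\e_n$-terms. This is exactly where the factor $\rho_n^{-2\alpha}$ comes from, and your claims about it are wrong; the final bookkeeping is then left to ``room''.

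After pull-back, the data $-2\tilde K_n|T_n'|^2$ in $\D$ and $2(1-\tilde h_n)|T_n'|$ on $\partial\D$ are approximate Dirac masses at $-p_n$ at scale $\rho_n$. By \eqref{mass} they are $O(|\e_n|)$ only in $L^1$, whereas $\|\,|T_n'|^2\|_{L^q(\D)}\sim\rho_n^{-2\alpha}$ and $\|T_n'\|_{L^q(\partial\D)}\sim\rho_n^{-\alpha}$. Their response is a nonvanishing multiple of $\e_n\log(|x+p_n|+\rho_n)$: the interior and boundary masses, $\pi\e_n/(1+\e_n)$ and $4\pi\e_n/(1+\e_n)$, have the same sign and add up. So the response has $L^\infty$ norm of order $|\e_n|\log(1/\rho_n)$ and $C^{0,\alpha}$ seminorm of order $|\e_n|\rho_n^{-\alpha}$. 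Hence your assertions that the right-hand side is $O(\rho_n+|\e_n|)$ in $L^q$, and that $\|\tilde\psi_n\|_{L^\infty},\|\tilde\psi_n\|_{C^{0,\alpha}}\le C(\rho_n+|\e_n|)$, are false. Similarly, $h_n\circ T_n-\hat h_n$ is $O(\rho_n)$ only in $L^1(\partial\D)$; it is of order one on an arc of length $\sim\rho_n$ around $-p_n$, hence of order $\rho_n^{1/q}$ in $L^q$.

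Nor does the ``room'' close automatically. Feeding these $L^q$ norms into the linear $W^{1+1/q,q}$ estimate gives $|\e_n|\rho_n^{-2\alpha}$ in the pulled-back variables. Composing with $T_n^{-1}$ costs another $\rho_n^{-\alpha}$, giving $|\e_n|\rho_n^{-3\alpha}$, which is worse than \eqref{eq:norm_psi}.

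The missing idea is the paper's splitting. Subtract from $v_n-\tilde v_n$ the zero-mean solution $s_n$ of the Neumann problem carrying exactly the $\e_n$-data $-2\tilde K_n(|f'_{a_n}|^2-1)$ and $-2(\tilde h_n-1)(|f'_{a_n}|-1)$. These data are compatible and of $L^1$ size $C|\e_n|$, so $\|s_n\|_{W^{1,q}}\le C|\e_n|$. Nondegeneracy is then applied only to $\xi_n=v_n-\tilde v_n-s_n$, whose forcing is genuinely $O(\rho_n^{1/q}+|\e_n|)$ in $L^q$. This gives $\|\xi_n\|_{C^{0,\alpha}}\le C(\rho_n^{1-\alpha}+|\e_n|)$, and only this piece pays $\rho_n^{-\alpha}$ under composition.

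The correction is estimated directly in the original variables. There $s_n\circ f_{-a_n}$ solves the Neumann problem with data $-2\tilde K_n(1-|f'_{-a_n}|^2)$ and $-2(\tilde h_n-1)(1-|f'_{-a_n}|)$. By $L^1$--$L^\infty$ interpolation their $L^q$ norms are at most $C|\e_n|\rho_n^{-2\alpha}$, which gives precisely the $\rho_n^{-2\alpha}$ of the statement with no composition loss. Alternatively, your route could be rescued by proving the sharp pulled-back bound $[s_n]_{C^{0,\alpha}}\le C|\e_n|\rho_n^{-\alpha}$, e.g.\ putting the interior data in $L^r$ with $W^{2,r}\hookrightarrow C^{0,\alpha}$; but that has to be done.

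Two smaller points.
\begin{enumerate}
\item The kernel at the radial bubble is two-dimensional, spanned by $x_j/(\hat\phi_n^2+\hat k_n|x|^2)$. Rotations act trivially, and the $\hat\phi$-variation is not a kernel direction, since it changes the boundary curvature. Moreover, $\hat\phi_n$ is prescribed in Theorem \ref{thm: Blowup}, so it cannot be adjusted.
\item Re-choosing $a_n$ to enforce orthogonality requires smallness of $v_n-\tilde v_n$ for an implicit-function argument, not merely boundedness of $\psi_n$. The paper instead keeps the barycenter normalization $\int_\D x_j e^{v_n}=0$ and uses it to bound the kernel coefficients by $C(|\e_n|+\|\xi_n\|_{L^2}^2)$.
\end{enumerate}
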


\begin{proof}
    We define:
    $$v_n(x) =u_n(f_{a_n}(x)) + 2\log|f'_{a_n}(x)|$$ 
    with $f_{a}(x) = \frac{a+x}{1+\overline{a}x}$. Invoking \cite[Proposition 4.1]{JevnikarLopezSorianoMedinaRuiz2022}, the following properties hold:
    \begin{itemize}
        \item[\emph{(a)}] $v_n$ solves the problem:
            \begin{equation}\label{eq:Problem_v_n}
                \begin{cases}
                    - \Delta v_n + 2 \tilde{K}_n|f'_{a_n}(x)|^2 = 2 K_n(f_{a_n}(x)) e^{v_n} & \text{in } \mathbb{D}, \\
                    \frac{\partial v_n}{\partial \nu} + 2\left(|f'_{a_n}(x)|(\tilde{h}_n-1)+1\right) = 2 h_n(f_{a_n}(x)) e^{v_n/2} & \text{on } \partial \mathbb{D}.
                \end{cases}
            \end{equation}
        \item[\emph{(b)}] $ \int_{\mathbb{D}} x_1e^{v_n}  = \int_{\mathbb{D}} x_2e^{v_n}  = 0$.
        \item[\emph{(c)}] $a_n \to p$, where $p \in \partial\mathbb{D}$ is the blow-up point.
        \item[\emph{(d)}] $v_n$ is uniformly bounded.
    \end{itemize}

    In view of \emph{(d)}, the nonlinear terms in \eqref{eq:Problem_v_n}, namely $2 K_n(f_{a_n}) e^{v_n}$ and $2 h_n(f_{a_n}) e^{v_n/2}$, are uniformly bounded in $L^\infty(\mathbb{D})$. Moreover, since $f_{a_n}$ is a conformal map, we have the identities:
    \begin{equation} \label{mass} \int_{\mathbb{D}} |f'_{a_n}(x)|^2 = \pi \quad \text{and} \quad \int_{\partial\mathbb{D}} |f'_{a_n}(x)|= 2\pi.\end{equation}
    Consequently, all terms involved in problem \eqref{eq:Problem_v_n} are uniformly bounded in $L^1(\mathbb{D})$ and $L^1(\partial\mathbb{D})$, respectively. Here and in what follows, let us fix $q \in (1,2)$. Standard elliptic regularity ensures that $v_n$ is bounded in $W^{1,q}(\mathbb{D})$ for any $q<2$. Therefore, up to a subsequence, $v_n \rightharpoonup v_0$ weakly in $W^{1,q}(\mathbb{D})$, strongly in $L^q(\mathbb{D})$, and pointwise almost  everywhere. 
    
    Crucially, recalling property \emph{(d)}, the sequence $v_n$ is uniformly bounded. This ensures that the exponential terms $e^{v_n}$ and $e^{v_n/2}$ are uniformly bounded in $L^\infty(\mathbb{D})$. Then, by the Dominated Convergence Theorem, $e^{v_n} \to e^{v_0}$ in $L^p(\mathbb{D})$ for any $p \geq 1$. Passing to the limit, $v_0$ satisfies:
    \begin{equation*}
        \begin{cases}
            - \Delta v_0 = 2 K(p) e^{v_0} & \text{in } \mathbb{D}, \\
            \frac{\partial v_0}{\partial \nu} + 2 = 2 h(p) e^{v_0/2} & \text{on } \partial \mathbb{D}.
        \end{cases}
    \end{equation*}
    Furthermore, the barycenter condition implies $\int_{\mathbb{D}} x_1 e^{v_0}  = \int_{\mathbb{D}} x_2 e^{v_0} = 0$, which identifies $v_0$, following \cite[Lemma 2.2]{JevnikarLopezSorianoMedinaRuiz2022}, as:
    $$ v_0(x) = 2\log\frac{2\phi(p)}{\phi^2(p)+K(p)|x|^2}. $$

    Motivated by this limiting profile, we introduce the auxiliary sequence
    $$\tilde v_n(x) =2\log\frac{2\hat\phi_n}{\hat\phi^2_n+\hat k_n|x|^2},$$
    which corresponds to the exact solution of the Liouville problem with frozen coefficients (defined in \eqref{eq:def_hat_phi_and_k}) at the concentration point:
    \begin{equation*}
        \begin{cases}
            -\Delta \tilde{v}_n = 2\hat{k}_n e^{\tilde{v}_n} & \text{in } \mathbb{D}, \\
            \frac{\partial \tilde{v}_n}{\partial \nu} + 2 = 2 \hat{h}_n e^{\tilde{v}_n/2} & \text{on } \partial\mathbb{D}.
        \end{cases}
    \end{equation*}
    Observe that $\tilde v_n \to v_0$ in $C^k(\overline{\mathbb{D}})$ for any $k \in \mathbb{N}$. We also introduce the singular correction $s_n(x)$ as the unique solution with zero mean to:
    \begin{equation*}
        \begin{cases}
            - \Delta s_n = - 2 \tilde K_n(|f'_{a_n}(x)|^2-1) & \text{in } \mathbb{D}, \\
            \frac{\partial s_n}{\partial \nu} = -2(\tilde h_n-1)(|f'_{a_n}(x)|-1) & \text{on } \partial \mathbb{D}.
        \end{cases}
    \end{equation*}
    Note that the compatibility condition for this Neumann problem is satisfied because the right-hand sides integrate to zero. Additionally, observe that, by \eqref{mass}, standard elliptic estimates yield the quantitative bound 
    \begin{equation}
        \label{eq:norm_sn}
        \|s_n\|_{W^{1,q}} \leq C |\varepsilon_n|,
    \end{equation} 
    which implies $s_n \to 0$ strongly in $W^{1,q}(\mathbb{D})$ for any $q<2$.

    \medskip

    Defining the error term as $\xi_n(x) =v_n(x) - (\tilde v_n(x) + s_n(x))$, it follows from the convergence results above that $\xi_n \to 0$ in $W^{1,q}(\mathbb{D})$. Moreover, a direct computation shows that $\xi_n$ satisfies the linearized equation:
    \begin{equation}\label{eq:linearized_equation}
        \begin{cases}
            - \Delta \xi_n = 2 \hat{k}_n e^{\tilde v_n}\xi_n + c_n(x) & \text{in } \mathbb{D}, \\
            \frac{\partial \xi_n}{\partial \nu} = \hat{h}_n e^{\tilde v_n/2}\xi_n + d_n(x) & \text{on } \partial \mathbb{D},
        \end{cases}
    \end{equation}
    where the residuals are given by:
    \begin{equation*}
        \begin{split}
            c_n(x) &=2\left[K_n(f_{a_n}(x)) - \hat{k}_n\right]e^{v_n} + 2\hat{k}_n e^{\tilde v_n}\left(e^{v_n - \tilde{v}_n}-1-\xi_n\right) - 2 \tilde{K}_n, \\
            d_n(x) &=2\left[h_n(f_{a_n}(x)) - \hat{h}_n\right]e^{v_n/2}  + 2\hat{h}_n e^{\tilde v_n/2}\left(e^{(v_n - \tilde{v}_n)/2}-1-\frac{\xi_n}{2}\right) - 2(\tilde{h}_n-1).
        \end{split}
    \end{equation*}
    
  The objective is to estimate the norm $\|\xi_n\|_{W^{1+1/q,q}}$. To do so, we decompose the error as $\xi_n = \hat{\xi}_n + \sum_{j=1}^2 c_{n,j} Z_{n,j}$, where the component $\hat{\xi}_n$ satisfies:
    \begin{equation}\label{eq:ortogonal_condition}
        \int_{\mathbb{D}} x_j \hat{\xi}_n e^{\tilde{v}_n} \,  = 0, \quad \text{for } j=1,2,
    \end{equation}
    and $Z_{n,j}$ are defined by:
    $$ Z_{n,j}(x) =\frac{x_j}{\hat{\phi}_n^2 + \hat{k}_n|x|^2}, \quad \text{for } j=1,2. $$
    See \cite[Lemma 2.3]{JevnikarLopezSorianoMedinaRuiz2022} for further details. 

    Our strategy consists of analyzing $\hat{\xi}_n$ and $\sum_{j=1}^2 c_{n,j} Z_{n,j}$ separately. First, since $\hat{\xi}_n$ solves \eqref{eq:linearized_equation} and satisfies the orthogonality condition \eqref{eq:ortogonal_condition}, the linear estimates from \cite[Lemma 2.5]{JevnikarLopezSorianoMedinaRuiz2022} apply, yielding: 
    \begin{equation*}\label{eq:est_orth_pre}
        \|\hat{\xi}_n\|_{W^{1+1/q,q}} \leq C (\|c_n\|_{L^q} + \|d_n\|_{L^q}).
    \end{equation*}

    To estimate the norms on the right-hand side, we first observe that 
    $$ \|\tilde{K}_n\|_{L^\infty} + \|\tilde{h}_n-1\|_{L^\infty} \leq C |\varepsilon_n|. $$ 
    Furthermore, following the computations in \cite[Proposition 4.1 (e)]{JevnikarLopezSorianoMedinaRuiz2022}, we obtain:
    $$\left\|2\left[K_n(f_{a_n}(x)) - \hat{k}_n\right]e^{v_n} + 2\hat{k}_n e^{\tilde v_n}\left(e^{v_n - \tilde{v}_n}-1-\xi_n\right)\right\|_{L^q}\leq C(1-|a_n|)^{2/q},$$
    and
    $$\left\|2\left[h_n(f_{a_n}(x)) - \hat{h}_n\right]e^{v_n/2}  + 2\hat{h}_n e^{\tilde v_n/2}\left(e^{(v_n - \tilde{v}_n)/2}-1-\frac{\xi_n}{2}\right)\right\|_{L^q}\leq C(1-|a_n|)^{1/q}.$$
    
    Combining these estimates allows us to conclude:
    \begin{equation}\label{eq:est_orth}
        \|\hat{\xi}_n\|_{W^{1+1/q,q}} \leq C (1-|a_n|)^{1/q}+|\varepsilon_n|.
    \end{equation}
    
    In order to control the coefficients $c_{n,j}$, we expand the barycenter condition \emph{(b)} for each coordinate $x_k$, with $k \in \{1,2\}$: 
    \begin{equation}\label{eq:barycenter_expanded}
        \begin{split}
        0 & = \int_{\mathbb{D}} x_k e^{\tilde{v}_n} e^{s_n + \xi_n} \\
          & = \int_{\mathbb{D}} x_k e^{\tilde{v}_n} \left( 1 + s_n + \hat{\xi}_n + \sum_{j=1}^2 c_{n,j} Z_{n,j} + O((|s_n| + |\xi_n|)^2) \right).
        \end{split}
    \end{equation}
    Here we have performed a Taylor expansion of $e^{s_n+\xi_n}$. The validity of this expansion relies on the fact that the exponent $s_n + \xi_n = v_n - \tilde{v}_n$ is uniformly bounded in $L^\infty(\mathbb{D})$. 
    
    In equation \eqref{eq:barycenter_expanded} we have that $\int x_k e^{\tilde{v}_n} = 0$ by symmetry. Moreover, thanks to the orthogonality condition imposed on $\hat{\xi}_n$, $\int_{\mathbb{D}} x_k e^{\tilde{v}_n} \hat{\xi}_n =0$. Finally, $\int_{\mathbb{D}} x_k e^{\tilde{v}_n} Z_{n,j} =0$ if $j \neq k$, again by symmetry.  Consequently, we are led with:
    \begin{equation}\label{eq:decouple_equation}
        c_{n,k} \int_{\mathbb{D}} x_k Z_{n,k} e^{\tilde{v}_n}  =  - \int_{\mathbb{D}} x_k s_n e^{\tilde{v}_n} + O(\|\xi_n\|_{L^2}^2 + \varepsilon_n^2),\quad\text{for } k \in \{1,2\}.
    \end{equation}
    
    Substituting the explicit expression of $Z_{n,k}$, the integral coefficient on the left-hand side satisfies the nondegeneracy condition:
    \begin{equation}\label{eq:nondegeneracy}
        \left| \int_{\mathbb{D}} x_k Z_{n,k} e^{\tilde{v}_n} \right| = \int_{\mathbb{D}} \frac{x_k^2}{\hat{\phi}_n^2 + \hat{k}_n|x|^2} e^{\tilde{v}_n}  \geq C > 0,\quad\text{for } k \in \{1,2\}.
    \end{equation} 
    On the other hand, since $x_k e^{\tilde{v}_n}$ is uniformly bounded, we can estimate:
    $$ \left| \int_{\mathbb{D}} x_k s_n e^{\tilde{v}_n}  \right| \leq C \|s_n\|_{L^1} \leq C |\varepsilon_n|. $$
    By \eqref{eq:decouple_equation} and \eqref{eq:nondegeneracy}, we can isolate $c_{n,k}$ to obtain:
    \begin{equation*}
        |c_{n,k}| \leq C \left( |\varepsilon_n| + \|\xi_n\|_{L^2}^2 \right).
    \end{equation*}

    Combining these bounds with \eqref{eq:est_orth} we obtain:
    \begin{equation*}
        \|\xi_n\|_{W^{1+1/q,q}} \leq \|\hat{\xi}_n\|_{W^{1+1/q,q}} + C \sum_{j=1}^2 |c_{n,j}|  \leq C \left( (1-|a_n|)^{1/q} + |\varepsilon_n|+ \|\xi_n\|_{L^2}^2 \right).
    \end{equation*}
    
    Using the embedding $\|\xi_n\|_{L^2} \leq C \|\xi_n\|_{W^{1+1/q,q}}$ together with the fact that $\|\xi_n\|_{L^2} \to 0$, we get:
    $$ \|\xi_n\|_{W^{1+1/q,q}} \leq C \left[ (1-|a_n|)^{1/q} + |\varepsilon_n| \right]. $$

    By the Sobolev embedding $W^{1+1/q,q}(\mathbb{D}) \hookrightarrow C^{0,\alpha}(\overline{\mathbb{D}})$ with $\alpha=1-1/q$, the error term satisfies:
    \begin{equation}\label{eq:Holder_Xi}
        \|\xi_n\|_{C^{0,\alpha}} \leq C\left[(1-|a_n|)^{1-\alpha} + |\varepsilon_n|\right],
    \end{equation} 
    valid for any $\alpha \in (0, 1/2)$.
    
   Finally, we transfer the estimates \eqref{eq:norm_sn} and \eqref{eq:Holder_Xi} to the original coordinates via $\psi_n = (\xi_n + s_n) \circ f_{-a_n}$. First, the correction term $\tilde{s}_n =s_n \circ f_{-a_n}$ satisfies:
    \begin{equation*}
    \begin{cases}
        -\Delta \tilde{s}_n(x) = -2\tilde{K}_n \left( 1 - |f'_{-a_n}(x)|^2 \right) & \text{in } \mathbb{D}, \\
        \frac{\partial \tilde{s}_n}{\partial \nu}(x) = -2(\tilde{h}_n-1) \left( 1 - |f'_{-a_n}(x)| \right) & \text{on } \partial\mathbb{D}.
    \end{cases}
    \end{equation*}
    Again, by the Sobolev embedding $W^{1+1/q,q}(\mathbb{D}) \hookrightarrow C^{0,\alpha}(\overline{\mathbb{D}})$ with $\alpha=1-1/q$, combined with the linear estimates from \cite[Lemma 2.4]{JevnikarLopezSorianoMedinaRuiz2022}, we have:
    \begin{equation*}
        \|\tilde{s}_n\|_{C^{0,\alpha}} \leq C \|\tilde{s}_n\|_{W^{1+1/q,q}} \leq C |\varepsilon_n|\left( 1 + \left\| |f'_{-a_n}|^2 \right\|_{L^q(\mathbb{D})} + \|f'_{-a_n}\|_{L^q(\partial\mathbb{D})}\right).
    \end{equation*}
    Observe that $\|f_{a_n}'\|_{L^{\infty}} = C (1-|a_n|)^{-1}$. Now, interpolating between the $L^1$ and $L^\infty$ norms we get:
    \begin{equation*}
    \begin{split}
        \left\| |f'_{-a_n}|^2 \right\|_{L^q} 
        &\leq \left\| |f'_{-a_n}|^2 \right\|_{L^1}^{1/q} \cdot \left\| |f'_{-a_n}|^2 \right\|_{L^\infty}^{1-1/q} \\
        &\leq C (1-|a_n|)^{-2\alpha}.
    \end{split}
    \end{equation*}
    Similarly $\|f'_{-a_n}\|_{L^q(\partial\mathbb{D})}\leq C (1-|a_n|)^{-\alpha}$, so we conclude:
    \begin{equation}\label{eq:tilde_sn_estimate}
        \|\tilde{s}_n\|_{C^{0,\alpha}} \leq C |\varepsilon_n| (1-|a_n|)^{-2\alpha}.
    \end{equation}

    In contrast, for $\tilde{\xi}_n=\xi_n \circ f_{-a_n}$ we can use \eqref{eq:Holder_Xi} to derive:
    \begin{equation*}
        [\tilde{\xi}_n]_{C^{0,\alpha}}\leq [\xi_n]_{C^{0,\alpha}} \, \|f'_{-a_n}\|_{L^\infty}^\alpha \leq C \|\xi_n\|_{C^{0,\alpha}} (1-|a_n|)^{-\alpha}.
    \end{equation*}
    
    Combining these bounds we can conclude 
    $$[\psi_n]_{C^{0,\alpha}} \leq C (1-|a_n|)^{-2\alpha} \left[ (1-|a_n|) + |\varepsilon_n| \right].$$ 
    
    To conclude the proof of \eqref{eq:norm_psi}, we only need to control the norm $\|\psi_n\|_{C^0}$, which can be bounded by $\|\psi_n\|_{C^0} \leq \|\xi_n\|_{C^0} + \|\tilde{s}_n\|_{C^{0}}$.
    From \eqref{eq:Holder_Xi} and \eqref{eq:tilde_sn_estimate}, we conclude that: $$\|\psi_n\|_{C^0} \leq C[(1-|a_n|)^{1-\alpha} + |\varepsilon_n|].$$
    
    Therefore, the estimate for $[\psi_n]_{C^{0,\alpha}}$ controls the full norm, yielding the desired result.
\end{proof}

Once we have an appropriate estimate on the error term $\psi_n$, we now plan to use Kazdan-Warner identities to obtain information on the derivatives of $\Phi$ at the blow-up point $p$. The proof of Theorem \ref{thm:normal_condition} relies on the following three lemmas.
       
\begin{lemma}
$\partial_\tau \Phi(p)= 0$.
\end{lemma}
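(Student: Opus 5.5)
By rotational invariance I assume $p=(1,0)$, so that $a_n=(\lambda_n,0)+o(1-|a_n|)$ after a small rotation. I apply the Kazdan--Warner identity of Proposition \ref{prop:Kazdan-Warner_ind} to $u_n$, with the field $F=(1-x_1^2+x_2^2,-2x_1x_2)$ that generates conformal transformations fixing $p$. Since $\tilde K_n$ and $\tilde h_n$ are constants, the reduced form applies:
\[
\int_{\partial\D}\left[4x_2\partial_\tau h_n e^{u_n/2}-2x_1u_n(\tilde h_n-1)\right]=\int_\D\left[4x_1u_n\tilde K_n+e^{u_n}\nabla K_n\cdot F\right].
\]
The idea is that $e^{u_n}$ and $e^{u_n/2}$ concentrate at $p$ with masses of order one, while $F$ vanishes quadratically at $p$ and $x_2$ vanishes linearly. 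The leading terms are therefore of order $1-|a_n|$ and carry the derivatives of $K$ and $h$ at $p$. The $\tilde K_n,\tilde h_n$ terms are $O(|\varepsilon_n|)$ times $\int|u_n|$, which is $O(|\varepsilon_n|\log\frac{1}{1-|a_n|})$ because $u_n=u_{a_n}+\psi_n$ with $\psi_n$ bounded.

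The key steps, in order, are the following.
\begin{enumerate}
\item Substitute $u_n=u_{a_n}+\psi_n$ and change variables by $x=f_{a_n}(y)$, so that everything is expressed in terms of $v_n=\tilde v_n+s_n+\xi_n$ on the fixed disk. Here I use $x_2\circ f_{a_n}=O(1-|a_n|)\,y_2/|1+a_ny|^2$ and $F\circ f_{a_n}=O(1-|a_n|)$, up to the Jacobian factors.
\item Expand to first order in $\delta_n:=1-|a_n|$. With $\partial_\tau$ evaluated at $p$, I expect to obtain
\[
\delta_n\left[c_1\,\partial_\tau h(p)\int_{\partial\D}(\cdots)e^{\tilde v_n/2}+c_2\,\partial_\tau K(p)\int_{\D}(\cdots)e^{\tilde v_n}\right]+O\big(\delta_n^{1+\beta}+|\varepsilon_n|\delta_n^{\,?}\big).
\]
The normal-derivative contributions cancel at first order by the symmetry of $\tilde v_n$ in $y_2$.
\item Compute the explicit integrals against the frozen profile $\tilde v_n$ (as in \cite[Section 5]{JevnikarLopezSorianoMedinaRuiz2022} and the Appendix). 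This should show that the bracket equals a positive multiple of $\partial_\tau\Phi(p)$, where $\Phi=H+\sqrt{H^2+K}$ and $H=h$ on the boundary. I expect the combination to reduce to $\partial_\tau(h+\sqrt{h^2+K})(p)$ up to a nonzero factor.
\item Divide by $\delta_n$ and let $n\to\infty$ to conclude $\partial_\tau\Phi(p)=0$, provided all remaining errors are $o(\delta_n)$.
\end{enumerate}

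The main obstacle is controlling the error terms against $\delta_n$, because the relation between $|\varepsilon_n|$ and $\delta_n$ is unknown. The $\varepsilon$-terms $\int x_1u_n$ need care. For these I would use the odd-type structure: $\int_{\partial\D}x_1u_{a_n}$ and $\int_\D x_1u_{a_n}$ can be computed explicitly, and the $y_2$-odd parts vanish. The aim is to show that these terms contribute only to the normal ($x_1$-direction) balance and not to the $x_2$-odd part that detects $\partial_\tau$.

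Concretely, I would split every term into even and odd parts in $x_2$ with respect to the reflection fixing $p$. The $\partial_\tau$ information sits in the odd part. The terms $\tilde K_n\int x_1u_n$ and $(\tilde h_n-1)\int x_1u_n$ are even up to the odd part of $\psi_n$, which is bounded via Proposition \ref{prop:v_n_properties} by $|\varepsilon_n|\,\|\psi_n\|=O(|\varepsilon_n|\delta_n^{1-2\alpha}(\delta_n+|\varepsilon_n|))$.

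If this parity splitting does not close, I would instead apply the Kazdan--Warner identity with the rotated field corresponding to the tangential conformal direction. This isolates $\partial_\tau\Phi(p)$ directly, and I would use the Hölder estimate \eqref{eq:norm_psi} to show that the $\psi_n$-dependent corrections are $o(\delta_n)$, taking $\alpha$ small.
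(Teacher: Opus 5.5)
There is a genuine gap: the identity you chose cannot detect $\partial_\tau\Phi(p)$. The field $F=(1-x_1^2+x_2^2,-2x_1x_2)$ vanishes at $p=(1,0)$. Proposition \ref{prop:Kazdan-Warner_ind} with this field is exactly what the paper uses in Lemma \ref{lem:sign_relation} to get information on the \emph{normal} derivative.

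With $a_n$ on the ray through $p$, the profile $u_{a_n}$ is even in $x_2$. Hence the tangential terms $\partial_\tau h(p)\int_{\partial\D}x_2e^{u_{a_n}/2}$ and $\partial_{x_2}K(p)\int_\D x_1x_2e^{u_{a_n}}$ vanish identically. What survives at order $\delta_n=1-|a_n|$ is a combination of $(-\Delta)^{1/2}h(p)=\partial_\nu H(p)$ and $\partial_\nu K(p)$, i.e.\ a multiple of $\partial_\nu\Phi(p)$. So the roles in your Step 2 are reversed.

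Moreover, the $\e$-terms are not negligible. Since $\int_\D x_1=\int_{\partial\D}x_1=0$, the large constant part of $u_n$ drops out, and the Appendix gives $4\tilde K_n\int_\D x_1u_{a_n}+2(\tilde h_n-1)\int_{\partial\D}x_1u_{a_n}=-\frac{10\pi\e_n}{1+\e_n}(1+o(1))$. This is of exact order $|\e_n|$, with no reason to be $o(\delta_n)$. The identity therefore only yields a balance between $\delta_n\,\partial_\nu\Phi(p)$ and $\e_n$, which is precisely the content of Lemma \ref{lem:sign_relation}.

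Splitting into even and odd parts in $x_2$ does not help. This is a single scalar identity, and odd contributions simply integrate to zero rather than producing a separate equation. If instead $a_n$ is misaligned with $p$ by an angle $\theta_n$, a tangential term of size $\theta_n$ does appear. However, $\theta_n$ is uncontrolled relative to $\delta_n$ and $\e_n$, so nothing can be isolated.

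The missing idea is to use a field that does \emph{not} vanish at $p$ and is tangent there. Then the tangential information appears at order one and every error term is trivially $o(1)$. The paper takes the rotation field $F=(-x_2,x_1)$ in Lemma \ref{lem:pohosaev}: $\nabla\cdot F=0$, $DF$ is antisymmetric, and $F=\tau$ on $\partial\D$.

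Because $\tilde K_n$ and $\tilde h_n$ are constants, $\int_{\partial\D}\tilde h_n\partial_\tau u_n=0$ and $\int_\D\tilde K_n\nabla u_n\cdot F=\tilde K_n\int_{\partial\D}u_n F\cdot\nu=0$. So the perturbation disappears exactly, and integrating by parts on $\partial\D$ gives $2\int_{\partial\D}\partial_\tau h_n\,e^{u_n/2}+\int_\D e^{u_n}\nabla K_n\cdot F=0$. Now pass to the limit using $e^{u_n/2}\rightharpoonup\frac{2\pi}{\sqrt{h^2(p)+K(p)}}\delta_p$ and $e^{u_n}\rightharpoonup\frac{2\pi}{\Phi(p)\sqrt{h^2(p)+K(p)}}\delta_p$. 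This yields $2\Phi(p)\partial_\tau h(p)+\partial_\tau K(p)=0$, and the left side equals $2\sqrt{h^2(p)+K(p)}\,\partial_\tau\Phi(p)$.

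Neither Proposition \ref{prop:v_n_properties} nor any comparison between $\e_n$ and $\delta_n$ is needed. Your fallback gestures in this direction, but it does not specify the field. It also still aims at $o(\delta_n)$ error control, which misses the point that the leading term becomes $O(1)$.
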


\begin{proof}
Define the vector field $ F : \overline{\D} \to \mathbb{R}^2 $ by $ F(x_1, x_2) = (-x_2, x_1) $. Observe that $ F $ is tangential to $ \partial \D $ and oriented in the same direction as the tangential vector $ \tau $.

Applying Lemma \ref{lem:pohosaev}, and using the identities $ \nabla \cdot F = 0 $ and $ DF(\nabla u_n, \nabla u_n) = 0 $, we obtain the following expression:
$$
    \int_{\partial \D} \left(h_n e^{u_n/2} - \tilde{h}_n\right) \partial_\tau u_n 
    = \int_\D \left[ \tilde{K}_n \nabla u_n \cdot F + e^{u_n} \nabla K_n \cdot F \right].
$$

Integrating by parts and rearranging terms yields
\begin{equation}\label{eq:tangential_pohosaev}
    -\int_{\partial \D} \tilde{h}_n \, \partial_\tau u_n 
    = \int_{\partial \D} 2\, \partial_\tau h_n \, e^{u_n/2} 
    + \int_\D \left( \tilde{K}_n \nabla u_n \cdot F + e^{u_n} \nabla K_n \cdot F \right).
\end{equation}

From Proposition 3.1 in \cite{JevnikarLopezSorianoMedinaRuiz2022}, the following 

convergences hold:
$$
\begin{aligned}
    u_n &\to -\infty, \quad \text{uniformly in compact sets of } \overline{\D} \setminus \{p\}, \\
    h_n e^{u_n/2} &\rightharpoonup 2\pi \frac{h(p)}{\sqrt{h^2(p)+K(p)}} \delta_p,\quad \text{in the sense of measure supported on } \partial\D,\\
    K_n e^{u_n} &\rightharpoonup 2\pi \left(1 - \frac{h(p)}{\sqrt{h^2(p)+K(p)}}\right) \delta_p, \quad\text{in the sense of measure supported on } \D.
\end{aligned}
$$

Since $\tilde{K}_n$ and $\tilde{h}_n$ are constant functions, we obtain:
$$
    \tilde{K}_n \int_{\D} \nabla u_n \cdot F 
    = -\tilde{K}_n \int_\D u_n \nabla \cdot F = 0, 
    \quad 
    \tilde{h}_n \int_{\partial \D} \partial_\tau u_n = 0.
$$

Substituting these limits into \eqref{eq:tangential_pohosaev} and passing to the limit as $n \to \infty$, we obtain:
$$
    \frac{4\pi \, \partial_\tau h(p)}{\sqrt{h^2(p)+K(p)}} 
    + \frac{2\pi \, \partial_\tau K(p)}{\Phi(p) \sqrt{h^2(p)+K(p)}} = 0,
$$
which is equivalent to $\partial_\tau \Phi(p) = 0$.
\end{proof}

The estimate of the normal derivative of $\Phi$ requires a more accurate description of the behavior of $u_n$. By using convenient rotations we can assume, without loss of generality, that $p=(1,0)$ and $a_n\in\R$ and $a_n\to1$. Then the profile given in Theorem \ref{thm: Blowup} reduces to 
    \begin{equation}\label{eq:profile}
        u_{ a_n}(x_1,x_2):=2\log\left\{\frac{2(1- a_n^2)\hat{\phi}_n}{\hat{\phi}_n^2(1- a_n x_1)^2+\hat{\phi}_n^2 a_n^2x_2^2+\hat{k}_n(x_1- a_n)^2+\hat{k}_nx_2^2}\right\}.
    \end{equation}

\begin{lemma}\label{lem:descomposition}
    If $u_n = u_{ a_n} + \psi_n$, where $u_{ a_n}$ is given by \eqref{eq:profile}, then:
    \begin{equation}\label{eq:old_estimates_1}
        \int_{\partial\D}\partial_\tau h_n e^{u_n/2} x_2 =e^{\psi_n(p)/2} \int_{\partial\D} \partial_\tau h_n e^{u_{ a_n}/2} x_2 + O(1- a_n)^{1-2\alpha}\left(1- a_n+|\varepsilon_n|\right),
    \end{equation}
    \begin{equation}\label{eq:old_estimates_2}
    \begin{split}
        \int_\D \left( \partial_{x_2} K_n\, x_1x_2 - \partial_{x_1} K_n (1 - x_1^2 + x_2^2) \right)e^{u_n} =  & \\ e^{\psi_n(p)} \int_\D \left( \partial_{x_2} K_n\, x_1x_2 - \partial_{x_1} K_n (1 - x_1^2 + x_2^2) \right) e^{u_{ a_n}} 
        & + O(1- a_n)^{1-2\alpha}\left(1- a_n+|\varepsilon_n|\right),
    \end{split}
    \end{equation}
    \begin{equation*}\label{eq:new_estimates_1}
        \int_{\partial\D} x_1 u_n (\tilde{h}_n - 1) =\int_{\partial\D} x_1 u_{ a_n} (\tilde{h}_n - 1) + O(1- a_n)^{1-2\alpha}\left(1- a_n+|\varepsilon_n|\right),
    \end{equation*}
    and    
    \begin{equation*}\label{eq:new_estimates_2}
        \int_{\D} x_1 u_n \tilde{K}_n =\int_{\D} x_1 u_{ a_n} \tilde{K}_n +O(1- a_n)^{1-2\alpha}\left(1- a_n+|\varepsilon_n|\right).
    \end{equation*}
\end{lemma}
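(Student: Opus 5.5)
The plan is to prove all four estimates by inserting $u_n = u_{a_n}+\psi_n$ and using the Hölder bound of Proposition \ref{prop:v_n_properties}, $\|\psi_n\|_{C^{0,\alpha}}\le C(1-a_n)^{-2\alpha}(1-a_n+|\varepsilon_n|)$. For the exponential terms we write
\[
e^{u_n/2}=e^{\psi_n(p)/2}e^{u_{a_n}/2}+e^{u_{a_n}/2}\bigl(e^{\psi_n/2}-e^{\psi_n(p)/2}\bigr),
\]
and analogously for $e^{u_n}$. Since $\psi_n$ is uniformly bounded, the mean value theorem gives $|e^{\psi_n(x)/2}-e^{\psi_n(p)/2}|\le C|\psi_n(x)-\psi_n(p)|\le C[\psi_n]_{C^{0,\alpha}}|x-p|^\alpha$. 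The remainder in \eqref{eq:old_estimates_1} is then bounded by
\[
C[\psi_n]_{C^{0,\alpha}}\int_{\partial\D}|x_2|\,|x-p|^{\alpha}e^{u_{a_n}/2},
\]
since $\partial_\tau h_n$ is bounded. The remainder in \eqref{eq:old_estimates_2} is bounded the same way. There the weight is the bounded function $\partial_{x_2}K_n x_1x_2-\partial_{x_1}K_n(1-x_1^2+x_2^2)$, and $1-x_1^2+x_2^2 = (1-x_1)(1+x_1)+x_2^2 = O(|x-p|)$ while $|x_1x_2|\le|x_2|\le |x-p|$. So in both cases we only need a weight of order $|x-p|$.

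The key computation is therefore a moment bound on the bubble. It suffices to show
\[
\int_{\partial\D}|x-p|^{1+\alpha}e^{u_{a_n}/2}\le C(1-a_n)^{1+\alpha},\qquad
\int_{\D}|x-p|^{1+\alpha}e^{u_{a_n}}\le C(1-a_n)^{1+\alpha}.
\]
From \eqref{eq:profile}, with $d=1-a_n$, the denominator is comparable to $d^2+|x-p|^2$, using $\hat\phi_n^2+\hat k_n\ge c>0$ because $h^2(p)+K(p)>0$. Hence $e^{u_{a_n}/2}\lesssim d/(d^2+|x-p|^2)$ on the boundary and $e^{u_{a_n}}\lesssim d^2/(d^2+|x-p|^2)^2$ in the disk. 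Scaling $x-p=d y$ gives integrals of $|y|^{1+\alpha}/(1+|y|^2)$ over a one-dimensional region of size $1/d$, and of $|y|^{1+\alpha}/(1+|y|^2)^2$ over a two-dimensional region of size $1/d$. Both are at most $C d^{\alpha-1}$, which yields a total of order $d^{\alpha}$ times $d^{1}$ up to logarithms.

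Combining this with the Hölder bound gives an error of order $d^{-2\alpha}(d+|\varepsilon_n|)\,d^{\alpha}$, which is better than the claimed $d^{1-2\alpha}(d+|\varepsilon_n|)$ since $d^\alpha\le d^{1-\cdots}$. Even the crude bound $\int |x-p|\,e^{u_{a_n}/2}=O(d\log(1/d))$ multiplied by $[\psi_n]$ suffices after adjusting $\alpha$ slightly. I expect this bookkeeping of exponents, possibly requiring a marginally smaller $\alpha$ to absorb logarithms, to be the main technical point.

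The linear terms are easier. We have
\[
\int_{\partial\D}x_1u_n(\tilde h_n-1)-\int_{\partial\D}x_1u_{a_n}(\tilde h_n-1)=(\tilde h_n-1)\int_{\partial\D}x_1\psi_n,
\]
and since $|\tilde h_n-1|\le C|\varepsilon_n|$ and $\|\psi_n\|_{C^0}\le C[(1-a_n)^{1-\alpha}+|\varepsilon_n|]$, this is $O(|\varepsilon_n|(d^{1-\alpha}+|\varepsilon_n|))$. That is within $O(d^{1-2\alpha}(d+|\varepsilon_n|))$ provided $|\varepsilon_n|\lesssim d^{1-2\alpha}$. If this relation is not available, we instead use $\|\psi_n\|_{C^0}\le \|\psi_n\|_{C^{0,\alpha}}\le Cd^{-2\alpha}(d+|\varepsilon_n|)$ together with $|\varepsilon_n|\to0$ and absorb. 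This choice between the two bounds is the one spot where I would double-check the exponents. The interior term $\tilde K_n\int_\D x_1\psi_n$ is handled identically, using $|\tilde K_n|\le C|\varepsilon_n|$.
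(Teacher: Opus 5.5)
Your approach is the paper's own: the paper simply reruns the computation of \cite[Lemma 5.2]{JevnikarLopezSorianoMedinaRuiz2022} with the H\"older bound \eqref{eq:norm_psi}. However, two steps do not work as written, and the second is a genuine gap.

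\emph{The boundary moment.} Write $d=1-a_n$. The bound $\int_{\partial\D}|x-p|^{1+\alpha}e^{u_{a_n}/2}\le Cd^{1+\alpha}$ is false. After rescaling $x-p=dy$, the one-dimensional integrand $|y|^{1+\alpha}/(1+y^2)\sim|y|^{\alpha-1}$ is not integrable at infinity. So the integral is carried by $|x-p|\sim 1$ and is of exact order $d$. The correct, elementary bound is $d\,|x-p|^{1+\alpha}/(d^2+|x-p|^2)\le d\,|x-p|^{\alpha-1}$, which is integrable on $\partial\D$. Multiplied by $[\psi_n]_{C^{0,\alpha}}\le Cd^{-2\alpha}(d+|\e_n|)$, it gives exactly the stated remainder $O(d^{1-2\alpha}(d+|\e_n|))$, with nothing to spare; this is where the lemma's exponent comes from. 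In particular, your claim that $d^{-2\alpha}(d+|\e_n|)\,d^{\alpha}$ beats the stated rate is backwards: it is larger by the factor $d^{\alpha-1}$. Your fallback (the $O(d\log(1/d))$ bound with a slightly smaller H\"older exponent) is valid. The interior moment $O(d^{1+\alpha})$ is also correct, since in two dimensions the rescaled integrand decays like $|y|^{\alpha-3}$.

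\emph{The linear terms.} You rightly saw that these need a relation between $\e_n$ and $d$, but your fallback does not supply one. The bound $|\e_n|\,\|\psi_n\|_{C^{0,\alpha}}\le C|\e_n|d^{-2\alpha}(d+|\e_n|)$ is $O(d^{1-2\alpha}(d+|\e_n|))$ only if $|\e_n|\le Cd$, and $\e_n\to0$ absorbs nothing because $d\to0$ too. No such relation is available at this point: $|\e_n|\le C(1-a_n)$ is deduced in Lemma~\ref{lem:sign_relation} from this very lemma.

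The obstruction is real. Test the problem for $\tilde s_n=s_n\circ f_{-a_n}$ against $x_1$ (harmonic, with $\partial_\nu x_1=x_1$), and use $\int_{\partial\D}x_1|f'_{-a_n}|=2\pi a_n$ and $\int_\D x_1|f'_{-a_n}|^2=\pi a_n$. This gives exactly $\int_{\partial\D}x_1\tilde s_n=-5\pi a_n\e_n/(1+\e_n)$. Hence $(\tilde h_n-1)\int_{\partial\D}x_1\psi_n$ contains the term $5\pi a_n\e_n^2/(1+\e_n)^2$. The remaining piece satisfies $(\tilde h_n-1)\int_{\partial\D}x_1\tilde\xi_n=O(|\e_n|d+\e_n^2d^{\alpha})$: subtract $\xi_n(-p)$ and use the H\"older bound on $\xi_n$ together with $|f_{-a_n}(x)+p|\le Cd/(d+|x-p|)$. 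So the $\e_n^2$ term exceeds the stated remainder whenever $|\e_n|\gg d^{1-2\alpha}$. The paper's ``immediate consequences'' passes over the same point.

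The repair is to prove the last two estimates with an extra $O(\e_n^2)$:
\begin{itemize}
\item handle $\tilde s_n$ by the identity above (in the interior, test against $x_1(3-|x|^2)/8$, which has zero normal derivative);
\item handle the rest by $\|\xi_n\|_{C^0}\le C(d^{1-\alpha}+|\e_n|)$.
\end{itemize}
Avoid sup bounds on $\tilde s_n$. It is of size $|\e_n|\log(1/d)$ away from $p$, so the bound $\|\psi_n\|_{C^0}\le C[(1-a_n)^{1-\alpha}+|\e_n|]$ that you take from the proof of Proposition~\ref{prop:v_n_properties} misses a logarithm. The extra $O(\e_n^2)$ is harmless. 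In \eqref{eq:decomp_2} the right-hand side is $-10\pi\e_n(1+\e_n)^{-1}(1+o(1))$, so the error is absorbed into the $o(1)$. The deduction $|\e_n|\le C(1-a_n)$ still goes through, and a posteriori the lemma holds as stated.
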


\begin{proof}
    The estimates in \eqref{eq:old_estimates_1} and \eqref{eq:old_estimates_2} can be obtained following the same computations of \cite[Lemma 5.2]{JevnikarLopezSorianoMedinaRuiz2022} and making use of the $C^{0,\alpha}$ bound for $\psi_n$ provided in \eqref{eq:norm_psi}. The last two estimates are also immediate consequences of this bound.
\end{proof}

The following result completes the proof of Theorem \ref{thm:normal_condition}.
        
\begin{lemma}\label{lem:sign_relation}  If  $\e_n>0$, then $  \partial_\nu \Phi(p) \geq 0$. Instead, if $\e_n<0$, then $\partial_\nu \Phi(p) \leq 0$
\end{lemma}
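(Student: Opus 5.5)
We plan to apply the Kazdan--Warner identity of Proposition \ref{prop:Kazdan-Warner_ind} (constant-coefficient version, since $\tilde K_n$ and $\tilde h_n$ are constants) to $u_n$, with $p=(1,0)$ and $a_n\in(0,1)$, $a_n\to1$. Writing it as
$$
\int_{\partial\D}4x_2\partial_\tau h_n e^{u_n/2}-\int_\D e^{u_n}\nabla K_n\cdot F
= 2(\tilde h_n-1)\int_{\partial\D}x_1u_n+4\tilde K_n\int_\D x_1u_n ,
$$
the left-hand side is the ``geometric'' part, which in \cite{JevnikarLopezSorianoMedinaRuiz2022} produces a multiple of $(1-a_n)\partial_\nu\Phi(p)$. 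The right-hand side is the new perturbative part, proportional to $\e_n$ times a quantity whose sign we must identify.

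\textbf{Step 1 (geometric side).} Using \eqref{eq:old_estimates_1}--\eqref{eq:old_estimates_2} of Lemma \ref{lem:descomposition}, we replace $u_n$ by the explicit profile $u_{a_n}$ from \eqref{eq:profile}, at the cost of $e^{\psi_n(p)}$-factors (bounded above and below) and an error $O((1-a_n)^{1-2\alpha}(1-a_n+|\e_n|))$. We then Taylor expand $\partial_\tau h_n$ and $\nabla K_n$ around $p$, and compute the integrals against the explicit bubble exactly as in \cite[Lemma 5.2--5.3]{JevnikarLopezSorianoMedinaRuiz2022} (or the Appendix). Since $\partial_\tau\Phi(p)=0$ by the previous lemma, the leading term should be
$$c_0(1-a_n)\,\partial_\nu\Phi(p)+o(1-a_n)$$
with an explicit constant $c_0>0$; its sign must be tracked carefully against the cited computations. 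Taking $\alpha$ small, the error is $o(1-a_n)+O((1-a_n)^{1-2\alpha}|\e_n|)$.

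\textbf{Step 2 (perturbative side).} Using the last two estimates of Lemma \ref{lem:descomposition}, we replace $u_n$ by $u_{a_n}$ and compute $\int_{\partial\D}x_1u_{a_n}$ and $\int_\D x_1u_{a_n}$. On $\partial\D$ we have $u_{a_n}=\text{const}-4\log|1-a_nx|$ to leading order, and $\int_{\partial\D}x_1\log|1-a_nx|=-\pi a_n$. Hence
$$\int_{\partial\D}x_1u_{a_n}\to 4\pi>0,$$
and the interior integral behaves similarly. With $\tilde h_n-1=-\e_n/(1+\e_n)$ and $\tilde K_n=-\e_n/(2(1+\e_n))$, the right-hand side equals $-\e_n(\gamma+o(1))$ for some explicit $\gamma>0$. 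The key point is that both terms carry the same sign, which must be checked. The bounded terms $\psi_n$, $s_n$ contribute only $O(|\e_n|(1-a_n)^{1-2\alpha})$.

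\textbf{Step 3 (comparison).} Equating the two sides gives
$$c_0(1-a_n)\partial_\nu\Phi(p)=-\gamma\e_n+o(1-a_n)+o(|\e_n|).$$
Dividing by $(1-a_n)+|\e_n|$ and passing to the limit along a subsequence where $(1-a_n)/((1-a_n)+|\e_n|)\to\theta\in[0,1]$, we get $c_0\theta\,\partial_\nu\Phi(p)=-\gamma(1-\theta)\,\mathrm{sign}(\e_n)$.

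If $\theta=0$, this forces $\gamma=0$, a contradiction, so $\theta>0$. Then $\partial_\nu\Phi(p)$ has the sign of $-\mathrm{sign}(\e_n)\cdot(c_0\gamma)^{-1}$-orientation, possibly zero when $\theta=1$. The sign conventions in $c_0$ must produce $\partial_\nu\Phi(p)\ge0$ for $\e_n>0$.

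\textbf{Main obstacle.} I expect the main obstacle to be Step 1: extracting the exact leading coefficient and its sign in the geometric side with the perturbed coefficients $h_n$, $K_n$. A secondary difficulty is ensuring that the error $(1-a_n)^{1-2\alpha}|\e_n|$ is truly $o(|\e_n|)$ and $o(1-a_n)$ combined. This needs $\alpha>0$ small; otherwise one should replace it by a finer bound on $\int x_1\psi_n$.
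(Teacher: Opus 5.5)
Your outline is the paper's route: the Kazdan--Warner identity of Proposition~\ref{prop:Kazdan-Warner_ind}, replacement of $u_n$ by $u_{a_n}$ through Lemma~\ref{lem:descomposition}, and the right-hand side $-10\pi\e_n(1+o(1))$ from $\int_{\partial\D}x_1u_{a_n}\to4\pi$ and $\int_\D x_1u_{a_n}\to\pi$. The gap is that it never determines what the lemma asserts, namely the sign of the geometric side. You posit $c_0>0$. With that value your Step~3 gives $\partial_\nu\Phi(p)\le0$ for $\e_n>0$, which is the reverse inequality. Saying that the sign conventions ``must produce'' the right answer assumes the conclusion.

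With the identity arranged as you wrote it, the geometric side is in fact a \emph{negative} multiple of $(1-a_n)\partial_\nu\Phi(p)$, and this has to be computed. On $\partial\D$, \eqref{eq:profile} gives $e^{u_{a_n}/2}=\frac{1-a_n^2}{\sqrt{\hat h_n^2+\hat k_n}\,|x-a_n|^2}$. Hence $\int_{\partial\D}x_2\partial_\tau h_n e^{u_{a_n}/2}$ equals $2\pi/\sqrt{\hat h_n^2+\hat k_n}$ times the Poisson extension of $x_2\partial_\tau h_n$ evaluated at $a_n$. That extension vanishes at $p$, and its normal derivative there is $(-\Delta)^{1/2}(x_2\partial_\tau h)(p)=(-\Delta)^{1/2}h(p)$. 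So the term is $-\frac{2\pi(1-a_n)}{\sqrt{h(p)^2+K(p)}}(-\Delta)^{1/2}h(p)+o(1-a_n)$; as a check, for $h=x_1$ the integrand is $-x_2^2e^{u_{a_n}/2}\le0$.

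In the interior, $F=-2(x-p)+O(|x-p|^2)$. Testing the bubble equation against $x_1$ gives $\int_\D(1-x_1)e^{u_{a_n}}=(1-a_n)\int_\D e^{u_{a_n}}$. Therefore $\int_\D e^{u_{a_n}}\nabla K_n\cdot F=2(1-a_n)\partial_\nu K(p)\int_\D e^{u_{a_n}}+o(1-a_n)$, with a positive coefficient; as a check, $\nabla K=(1,0)$ gives $\nabla K\cdot F=1-x_1^2+x_2^2\ge0$. This term enters your left-hand side with a minus sign.

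Now use $\int_\D e^{u_{a_n}}\to\frac{2\pi}{\Phi(p)\sqrt{h(p)^2+K(p)}}$ and $\sqrt{h^2+K}\,\partial_\nu\Phi=\Phi\,(-\Delta)^{1/2}h+\frac12\partial_\nu K$ at $p$. The left-hand side becomes $-\frac{8\pi}{\Phi(p)}(1-a_n)\partial_\nu\Phi(p)+o(1-a_n)$, so $c_0<0$. Only then does comparison with $-10\pi\e_n$ give $\e_n\,\partial_\nu\Phi(p)\ge0$. Do not calibrate your sign against the two expansions displayed just before \eqref{eq:decomp_2}: their signs are reversed relative to these checks, although the paper's final relation does carry the negative coefficient.

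Second, $e^{\psi_n(p)}$-factors that are merely bounded above and below do not suffice. By Lemma~\ref{lem:descomposition} the boundary term carries $e^{\psi_n(p)/2}$ and the interior term carries $e^{\psi_n(p)}$. So the leading term is $(1-a_n)\bigl(A\,t_n(-\Delta)^{1/2}h(p)+B\,t_n^2\,\partial_\nu K(p)\bigr)$ with $t_n=e^{\psi_n(p)/2}$ and $A/B=2\Phi(p)$. This is proportional to $\partial_\nu\Phi(p)$ only if $t_n\to1$; otherwise you would be reading off the sign of a different combination.

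The paper closes this with a bootstrap whose ingredients you already have. The geometric side is $O(1-a_n)$ whatever $t_n$ does, so the identity forces $|\e_n|\le C(1-a_n)$; this is your $\theta>0$ argument, which does not use the form of the leading term. Inserting this into \eqref{eq:norm_psi} gives $\|\psi_n\|_{C^{0,\alpha}}\le C(1-a_n)^{1-2\alpha}\to0$ for $\alpha<1/2$, hence $e^{\psi_n(p)}\to1$. Only after this step can the leading term be identified as above. You can then divide by $1-a_n$ directly instead of passing to the $\theta$-subsequence.
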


\begin{proof}

    We start from the integral identity derived in Proposition \ref{prop:Kazdan-Warner_ind}:
    \begin{equation}\label{eq:KW_identity}
        \int_{\partial\mathbb{D}}\left[4x_2\partial_\tau h_n e^{u_n/2} - 2x_1 u_n(\tilde{h}_n - 1)\right] = \int_\mathbb{D} \left[4x_1 u_n \tilde{K}_n + e^{u_n}\nabla K_n \cdot F\right],
    \end{equation}
    where $F(x_1,x_2) = (1-x_1^2 + x_2^2, -2x_1x_2)$. Applying the expansions obtained in Lemma \ref{lem:descomposition} to each term and grouping the integrals involving the profile $u_{a_n}$, equation \eqref{eq:KW_identity} becomes:
    \begin{equation}\label{eq:decomp}
        \begin{split}
            4 e^{\psi_n(p)/2} & \int_{\partial\mathbb{D}} \partial_\tau h_n e^{u_{a_n}/2} x_2 - e^{\psi_n(p)} \int_\mathbb{D} e^{u_{a_n}} \nabla K_n \cdot F \\
            &= 4\tilde{K}_n \int_{\mathbb{D}} x_1 u_{a_n} + 2(\tilde{h}_n - 1) \int_{\partial\mathbb{D}} x_1 u_{a_n} + \mathcal{R}_n,
        \end{split}
    \end{equation}
    where the remainder satisfies $|\mathcal{R}_n| \leq C(1- a_n)^{1-2\alpha}\left[1- a_n + |\varepsilon_n|\right]$.

    We analyze the asymptotic behavior of the leading terms. For the right-hand side of \eqref{eq:decomp}, as justified in Appendix \ref{sec:Appendix}, we have the convergence:
    \begin{equation*}
        4\tilde{K}_n \int_{\mathbb{D}} x_1 u_{a_n} + 2(\tilde{h}_n - 1)\int_{\partial\mathbb{D}} x_1 u_{a_n} = - \frac{10\pi \varepsilon_n}{1+\varepsilon_n}(1+o(1)).
    \end{equation*}

   For the left-hand side, following the computations in \cite[Prop. 5.3]{JevnikarLopezSorianoMedinaRuiz2022} and identifying the boundary term resulting with the half-Laplacian (see \cite[Appendix]{DaLioMartinazziRiviere2015}), we obtain:
    \begin{equation*}
    \begin{split}
        \int_{\partial\mathbb{D}} \partial_\tau h_n e^{u_{a_n}/2}x_2 &= 2\pi(1- a_n^2)\frac{\hat{\phi}_n}{\hat{\phi}_n^2+\hat{k}_n} (-\Delta)^{1/2} h(p) + o(1- a_n),\\
        \int_\mathbb{D} e^{u_{a_n}} \nabla K_n \cdot F &= -2\pi(1- a_n^2)\frac{\partial_{\nu}K(p)}{\hat{\phi}_n^2+\hat{k}_n} + o(1- a_n).
    \end{split}
    \end{equation*}
    
    Substituting these expansions into \eqref{eq:decomp}, we get:
    \begin{equation}\label{eq:decomp_2}
    \begin{split}
        \frac{2\pi(1- a_n^2)}{\hat{\phi}_n^2+\hat{k}_n} & \Bigg( 4 \hat{\phi}_n e^{\psi_n(p)/2} (-\Delta)^{1/2} h(p) + e^{\psi_n(p)} \partial_{\nu}K(p) \Bigg) + o(1-a_n) \\
        &= - \frac{10\pi \varepsilon_n}{1+\varepsilon_n}(1+o(1)) + \mathcal{R}_n.
    \end{split}
\end{equation}

    Since $\psi_n$ is bounded, we obtain  $|\varepsilon_n| \leq C (1- a_n)$. With this information we come back to the norm estimate \eqref{eq:norm_psi}:
    \begin{equation*}
        \|\psi_n\|_{C^{0,\alpha}} \leq C (1- a_n)^{-2\alpha} \left(1- a_n+|\varepsilon_n| \right) \leq C (1- a_n)^{1-2\alpha}.
    \end{equation*}
    Since $\alpha < 1/2$, this implies $\|\psi_n\|_{L^\infty} \to 0$ as $n \to \infty$, and in particular $e^{\psi_n(p)} \to 1$. Using again \eqref{eq:decomp_2} and recalling the definition of $\Phi$  we have: 
    $$ \sqrt{h(p)^2+K(p)} \, \partial_\nu \Phi(p) = \Phi(p) (-\Delta)^{1/2} h(p) + \frac{1}{2} \partial_\nu K(p). $$
    Thus, the equation \eqref{eq:decomp_2} simplifies to:
    \begin{equation*}
        - \frac{\hat{\phi}_n}{\hat{\phi}_n^2+\hat{k}_n} (1- a_n^2) \partial_\nu \Phi(p) = - 10\pi \varepsilon_n + o(1- a_n).
    \end{equation*}
    
    This concludes the proof.
\end{proof}
%%%%%%%%%%%%%%%%%%%%%%%%%%%%%%%%%%%%%%%%%%%%%%%%%%%%%%%%%%%%%%%%%%%%%%%%%%%%%%%%%%%%%%%%%%%%%%%%%%%%%%%%%%%%%%%%%%%%%%%%%%%%%%%%%%%%%%%%%%%%%%%%%%%%%%%%%%%%%%%%%%%%%%%%%%%%%%%%%%%%%%%%%%%%%%%%%%%%%%%% Section 5 %%%%%%%%%%%%%%%%%%%%%%%%%% %%%%%%%%%%%%%%%%%%%%%%%%%%%%%%%%%%%%%%%%%%%%%%%%%%%%%%%%%%%%%%%%%%%%%%%%%%%%%%%%%%%%%%%%%%%%%%%%%%%%%%%%%%%%%%%%%%%%%%%%%%%%%%%%%%%%%%%%%%%%%%%%%%%%%%%%%%%%%%%%%%%%%%%%%%%%%%

\section{Proof of the Main Theorems}\label{sec:proofs}

\begin{proof}[Proof of Theorems \ref{thm:existence_mountain_pass} and \ref{thm:existence_linking}]
    The proofs of both theorems follow a unified strategy. Let $(u_n)$ be the sequence of solutions to the perturbed problems obtained in Theorem \ref{prop:ConvergenceSubsequence} (with $\e_n>0$) or in Theorem  \ref{prop:ConvergenceSubsequence2} (with $\e_n<0$). Recall that, by construction, this sequence has uniformly bounded Morse index. We claim that it is also uniformly bounded from above. Once this uniform bound is established, standard elliptic regularity and compactness arguments guarantee that, up to a subsequence, $u_n$ converges to a solution of the original problem \eqref{eq:Problem}.

    Reasoning by contradiction, assume that $\max_{\overline{\mathbb{D}}} u_n \to +\infty$. According to Theorem \ref{thm: Blowup}, the singular set $S$ decomposes as $S_0 \cup S_1$. Moreover,
    \begin{itemize}
        \item In Theorem \ref{thm:existence_mountain_pass}, condition (ii) explicitly assumes that $\partial_\tau \mathfrak{D}(x) \neq 0$ whenever $\mathfrak{D}(x) = 1$. This implies that $S_0 = \emptyset$.
        \item In Theorem \ref{thm:existence_linking}, condition (i) implies that $\mathfrak{D}(x) > 1$ everywhere on $\partial \mathbb{D}$, so again $S_0 = \emptyset$.
    \end{itemize}
    
    Therefore, in virtue of Theorem \ref{thm: Blowup}, the singular set consists of a single boundary point $S=\{p\}$. Applying Theorem \ref{thm:normal_condition}, we have that $\partial_\tau \Phi(p)= 0$ and the sign relation:
    $$
    \e_n \, \partial_{\nu} \Phi(p)\geq 0
    $$
    This contradicts the geometric hypotheses in both cases: condition (iii) of Theorem \ref{thm:existence_mountain_pass} (if we choose $\varepsilon_n > 0$) and condition (ii) of Theorem \ref{thm:existence_linking} (if we choose $\varepsilon_n < 0$). 
    
    \end{proof}

%%%%%%%%%%%%%%%%%%%%%%%%%%%%%%%%%%%%%%%%%%%%%%%%%%%%%%%%%%%%%%%%%%%%%%%%%%%%%%%%%%%%%%%%%%%%%%%%%%%%%%%%%%%%%%%%%%%%%%%%%%%%%%%%%%%%%%%%%%%%%%%%%%%%%%%%%%%%%%%%%%%%%%%%%%%%%%%%%%%%%%%%%%%%%%%%%%%%%%%% Appendix %%%%%%%%%%%%%%%%%%%%%%%%%% %%%%%%%%%%%%%%%%%%%%%%%%%%%%%%%%%%%%%%%%%%%%%%%%%%%%%%%%%%%%%%%%%%%%%%%%%%%%%%%%%%%%%%%%%%%%%%%%%%%%%%%%%%%%%%%%%%%%%%%%%%%%%%%%%%%%%%%%%%%%%%%%%%%%%%%%%%%%%%%%%%%%%%%%%%%%%%

\appendix
\numberwithin{equation}{section}
\section{} \label{sec:Appendix}
This appendix is devoted to the computation of the integral limits used in the proof of Theorem \ref{thm:normal_condition}.

\begin{lemma}
    Let $u_{a_n}$ be the profile defined in \eqref{eq:profile}, where $a_n\in(0,1)$ and $a_n\to1$. Then
    \begin{equation*}
        \lim_{n\to+\infty}\int_{\mathbb{D}} x_1 u_{a_n} = \pi \quad \text{and} \quad \lim_{n\to+\infty}\int_{\partial\mathbb{D}} x_1 u_{a_n} = 4\pi.
    \end{equation*}
\end{lemma}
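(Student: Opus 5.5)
The plan is to strip off the parts of $u_{a_n}$ that do not contribute, pass to the limit by dominated convergence, and evaluate the remaining integrals by a Fourier (power series) expansion of $\log|1-z|$.

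\textbf{Step 1: drop the constant part.} Write
$$u_{a_n}(x)=2\log\bigl(2(1-a_n^2)\hat\phi_n\bigr)-2\log Q_n(x),$$
where
$$Q_n(x)=\hat\phi_n^2|1-a_nx|^2+\hat k_n|x-a_n|^2 .$$
Since $\int_{\D}x_1=\int_{\partial\D}x_1=0$, the constant term, which diverges like $\log(1-a_n)$, contributes nothing. Hence
$$\int x_1u_{a_n}=-2\int x_1\log Q_n,$$
on both $\D$ and $\partial\D$.

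\textbf{Step 2: identify the pointwise limit.} We have $\hat\phi_n\to\phi(p)$ and $\hat k_n\to K(p)$, so pointwise on $\overline{\D}\setminus\{p\}$
$$Q_n(x)\to\bigl(\phi(p)^2+K(p)\bigr)|1-x|^2 .$$
The constant is positive, since $\phi^2+K=2h\phi>0$ on $\partial\D$ because $h^2+K>0$ there.

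\textbf{Step 3: justify dominated convergence.} This is the only step needing care. Because $\hat k_n<0$ and $|x-a_n|\le|1-a_nx|$ on $\overline{\D}$, we get the lower bound
$$Q_n\ge(\hat\phi_n^2+\hat k_n)|1-a_nx|^2\ge c|1-a_nx|^2 .$$
We also have $Q_n\le C$. Moreover $|1-x|\le|1-a_nx|+(1-a_n)\le 2|1-a_nx|$, which gives
$$|\log Q_n|\le C+2\bigl|\log|1-x|\bigr|+C .$$
This bound is integrable on both $\D$ and $\partial\D$, uniformly in $n$. Therefore
$$\lim_n\int x_1u_{a_n}=-2\log(\phi^2+K)\int x_1-4\int x_1\log|1-x|=-4\int x_1\log|1-x| .$$

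\textbf{Step 4: compute the integrals explicitly.} Use the expansion
$$\log|1-z|=\operatorname{Re}\log(1-z)=-\sum_{m\ge1}\frac{r^m\cos(m\theta)}{m},\qquad z=re^{i\theta}.$$
By orthogonality only the $m=1$ term survives.

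On $\partial\D$:
$$\int_0^{2\pi}\cos\theta\log|1-e^{i\theta}|\,d\theta=-\pi,$$
so the boundary limit is $-4(-\pi)=4\pi$.

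On $\D$:
$$\int_0^1\!\!\int_0^{2\pi}r\cos\theta\,(-r\cos\theta)\,r\,d\theta\,dr=-\frac{\pi}{4},$$
so the interior limit is $-4(-\pi/4)=\pi$.

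The series manipulations are justified by $L^2$ convergence on circles of radius $r\le1$. Alternatively, for $r<1$ the series converges uniformly, and one can then let $r\to1$ using the integrability from Step 3.
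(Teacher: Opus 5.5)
Your proof is correct and follows the paper's route. You discard the constant using $\int x_1=0$, pass to the limit by dominated convergence via $Q_n\ge(\hat\phi_n^2+\hat k_n)|1-a_nx|^2$ (from $\hat k_n<0$ and $|x-a_n|\le|1-a_nx|$), and evaluate the first Fourier coefficient of $\log|1-z|$, which is exactly the identity $\int_0^{2\pi}\cos\theta\log(1-2\rho\cos\theta+\rho^2)\,d\theta=-2\pi\rho$ the paper cites; your two-sided bound on $|\log Q_n|$ is actually slightly more complete than the paper's one-sided majorant.

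One small slip: in fact $\phi^2+K=2\phi\sqrt{h^2+K}$ (it is $\phi^2-K$ that equals $2h\phi$). So positivity of the limit constant also needs $\phi(p)>0$, which holds because $h(p)>\sqrt{|K(p)|}>0$ at the blow-up point.
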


\begin{proof}
   Using the explicit expression of the profile, we rewrite the integrand as:
    \begin{equation*}
            \int_\mathbb{D} x_1 u_{a_n} = -2\int_\mathbb{D} x_1 \log{\left[\hat{\phi}_n^2|1- a_n x|^2+\hat{k}_n|x- a_n|^2\right]}.
    \end{equation*}
    Define the function $\tilde{u}_{a_n}(x) =-2\log{\left[\hat{\phi}_n^2|1- a_n x|^2+\hat{k}_n|x- a_n|^2\right]}$. As $n \to \infty$, the coefficients satisfy $\hat{\phi}_n^2 + \hat{k}_n \to \delta$, where $\delta =\hat{\phi}_0^2+\hat{k}_0$.
    
    Consequently,
    $$\tilde{u}_{a_n}(x) \to -2\log{\left(\delta|x-1|^2\right)},$$ 
    pointwise for all $x \in \mathbb{D} \setminus \{1\}$.
    
    To apply the Dominated Convergence Theorem, we need to find an integrable majorant. Fix $n_0 \in \mathbb{N}$ large enough such that for all $n \geq n_0$, $\hat{\phi}_n^2 + \hat{k}_n \geq \overline{\delta} > 0$ for some constant $\overline{\delta} < \delta$. Noticing that $\hat{k}_n$ is negative, we can write $\hat{\phi}_n^2 \geq \overline{\delta} + |\hat{k}_n|$. Combining this with the inequality $|1-a_n x|^2 \geq |x-a_n|^2$ (valid for $|x|\leq1$), we observe:
    \begin{equation*}
        \begin{split}
            -2\log{\left[\hat{\phi}_n^2|1- a_n x|^2+\hat{k}_n|x- a_n|^2\right]} &\leq -2\log{\left[\overline{\delta}|1- a_n x|^2 + |\hat{k}_n|( |1-a_n x|^2-|x-a_n|^2)\right]} \\
             &\leq -2\log{\left[\overline{\delta}|1- a_n x|^2\right]}.
        \end{split}
    \end{equation*}

    Furthermore, employing the estimate $|1-a_n x|^2 \geq a_n|x-1|^2$ (valid since $|x| \leq 1$ and $a_n \in (0,1)$), we obtain:
    \begin{equation*}
        \begin{split}
            -2\log{\left[\overline{\delta}|1- a_n x|^2\right]} &\leq -2\log{\left[a_n \overline{\delta} |1- x|^2\right]} \\
            &\leq -2\log{\left[\frac{\overline{\delta}}{2} |1- x|^2\right]},
        \end{split}
    \end{equation*}
    where we assumed $a_n \geq 1/2$ for sufficiently large $n$. Since the function $\left|\log|1-x|\right|$ is integrable over $\mathbb{D}$, this majorant allows us to pass the limit inside the integral.
    
    We proceed to compute the limit integral. By symmetry, we have:
    \begin{equation*}
        \begin{split}
           -2\int_\mathbb{D} x_1  \left[ \log \delta + \log (|1- x|^2) \right] &= -2\int_\mathbb{D} x_1 \log{|1- x|^2}\\
           &=\int_0^1\rho^2 \left( \int_0^{2\pi}\cos\theta \log(1-2\rho\cos\theta+\rho^2)d\theta \right) d\rho= \pi,
        \end{split}
    \end{equation*}
    where we have used the identity from \cite[Equation 4.224(9)]{GradshteynRyzhik2007}:
    $$\int_0^{2\pi}\cos (\theta) \log(1-2\rho\cos\theta+\rho^2)d\theta = -2\pi \rho \quad (\text{for } \rho^2 < 1).$$
    
    \bigskip
    The boundary term is analyzed analogously. Following the same dominance argument established above, we obtain:
    $$ \lim_{n\to \infty} \int_{\partial\mathbb{D}} x_1 u_{a_n} = -2 \int_{\partial\mathbb{D}} x_1 \log|1-x|^2 = -2 \int_0^{2\pi} \cos\theta \log(2-2\cos\theta) \, d\theta=4\pi.$$
\end{proof}

        \bibliographystyle{plain}
	\bibliography{references.bib} 
  \end{document}